\documentclass[11pt]{amsart}
\usepackage[english]{babel}
\usepackage[T1]{fontenc}

\usepackage{float}
\usepackage{amsmath}
\usepackage{amsthm}
\usepackage{amssymb}
\usepackage{mathrsfs}
\usepackage{dsfont}
\usepackage{bbm}
\usepackage{enumerate}
\usepackage{graphicx}
\usepackage{caption}
\usepackage{subcaption}
\usepackage{xcolor}
\usepackage{hyperref}

\usepackage{accents}
\usepackage{mathtools}

\newtheorem{theorem}{Theorem}[section]

\newtheorem{lemma}[theorem]{Lemma}
\newtheorem{proposition}[theorem]{Proposition}
\newtheorem{corollary}[theorem]{Corollary}
\newtheorem{definition}[theorem]{Definition}
\theoremstyle{definition}
\newtheorem{remark}[theorem]{Remark}

\newtheorem{example}[theorem]{Example}

\newcommand{\dist}{\operatorname{dist}}
\newcommand{\mad}{\operatorname{MAD}}

\title[$n$-adic Martingale Isoperimetry]{Sharp Ternary Martingale Isoperimetry and $n$-adic Takagi-Type Lower Bounds
}

\author[N. Alpay]{Natanael Alpay}
\address{(NA)
	Department of Mathematics\\ 
	University of California, Irvine,
	Irvine, CA 92697 \\
	USA}
\email{nalpay@uci.edu}
\date{}

\begin{document}

\begin{abstract}
Let $S_1$ be the one-variation associated with the regular
$n$-adic martingale filtration on $[0,1)$.  We study the martingale
isoperimetric profile
\[
V_n(x):=
\inf_{\substack{A\subset[0,1)\ {\rm measurable}\\ |A|=x}}
\|S_1(\mathbbm 1_A)\|_1 .
\]
For the ternary filtration we determine this profile exactly.  Namely,
\[
V_3(x)=T_3(x):=
\sum_{j=0}^{\infty}3^{-j}\psi_3(\{3^j x\}),
\]
where
\[
\psi_3(t)=
\min\left\{
\frac{1+2\left|t-\frac12\right|}{3},
\frac{2-4\left|t-\frac12\right|}{3}
\right\},
\qquad 0\le t\le1 .
\]
Thus the sharp ternary profile is a Takagi-type Bellman function.  It is,
however, not the usual ternary Takagi--van der Waerden function
$\omega_3$; for example,
\[
T_3(1/3)=4/9,
\qquad
\omega_3(1/3)=1/3 .
\]
For general $n\ge2$, we prove that every measurable
$A\subset[0,1)$ satisfies
\[
\|S_1(\mathbbm 1_A)\|_1
\ge
\omega_n(|A|^*)
\asymp_n
|A|^*\log\frac1{|A|^*},
\qquad
|A|^*:=\min\{|A|,1-|A|\}.
\]
Moreover, this logarithmic order is sharp up to a constant depending
only on $n$.

Finally, for every $0<\alpha<1$, we prove the endpoint estimate
\[
\|S_1(\mathbbm 1_A)\|_\alpha\ge |A|^*,
\]
and show that it is sharp up to a constant depending only on $\alpha$
and $n$.
\end{abstract}
\maketitle

\noindent\text{2020 AMS Classification: }{Primary 60G42; Secondary 05C35, 05D05, 42B25, 39B62, 11A63}.\\

\noindent\text{Keywords: }{martingale isoperimetry; $n$-adic filtrations; Bellman functions; Takagi functions; edge-isoperimetric inequalities; digit sums; Boolean functions.}

\section{Introduction}\label{into}

Isoperimetric inequalities study how small the boundary of a set can be when
only its size is prescribed.  In discrete mathematics this question often
depends on the way mass is arranged across a finite or recursive structure.
On the Boolean cube, the basic example is the edge boundary: for
$A\subset\{-1,1\}^m$, the $L^1$ norm of the Boolean gradient of
$\mathbbm 1_A$ counts, up to normalization, the edges joining $A$ to its
complement.  Sharp forms of this problem are closely tied to digit structure.
For instance, Hart's formula expresses the extremal number of internal edges
in terms of cumulative binary digit sums \cite{hart,hart1976note}.  In a
related direction, Lev studied edge-isoperimetric problems through generalized
Takagi functions \cite{LevEdgeIso}.  These results are one of the main
motivations for the present paper.

A second point of comparison comes from Talagrand's two-sided gradient
isoperimetric inequality on the Boolean cube.  In one formulation, for
$A\subset\{-1,1\}^m$ and $q\ge 1/2$,
\begin{equation}\label{eq:intro-talagrand}
\bigl\||\nabla \mathbbm 1_A|_1\bigr\|_q
\ge
C (|A|^*)^{1/q}\log\frac1{|A|^*},
\qquad
|A|^*:=\min\{|A|,1-|A|\},
\end{equation}
where $C>0$ is a universal constant; see \cite{DIR,BIM}.  When $q=1$,
the left-hand side is the edge boundary of $A$.  More generally, for
Boolean functions $f=\mathbbm 1_A$, the $\beta$-gradient is determined by
the $1$-gradient, since $|2D_j f|\in\{0,1\}$.  Thus lower bounds for
Boolean gradients reduce, in this setting, to lower bounds for the
one-gradient.  This is the Boolean-cube motivation for studying the
one-variation in the $n$-adic martingale setting. For more background and a deeper look at the relation between the hypercube and the dyadic interval see {\cite[Section 1.1]{alpay2025lower}}.

The object studied here is the rooted regular $n$-ary tree, or equivalently
the regular $n$-adic filtration on $[0,1)$.  For the purposes of the
introduction, let $f=\mathbbm 1_A$, let $f_m$ be the conditional
expectation of $f$ onto the level $m$ $n$-adic partition, and put
$d_m=f_m-f_{m-1}$.  We write
\[
S_1(f)(y):=\sum_{m\ge1}|d_m(y)|,
\]
with the value $+\infty$ allowed.  This is the one-variation associated
with the filtration.

If $I$ is an $n$-adic interval, write
\[
x_I:=\frac{|A\cap I|}{|I|}
\]
for the density of $A$ on $I$.  If $I_1,\ldots,I_n$ are the children of
$I$, then on the child $I_i$ the next martingale difference is
\[
x_{I_i}-x_I .
\]
Thus the contribution of this splitting to the $L^1$ one-variation is
\[
|I|\,\frac1n\sum_{i=1}^n |x_{I_i}-x_I|.
\]
The local boundary cost is therefore the mean absolute deviation of the child
densities from their parent density.  In this sense
$\|S_1(\mathbbm 1_A)\|_1$ is a natural boundary energy on the rooted
$n$-ary tree.

The main profile considered in this paper is
\[
V_n(x):=
\inf_{\substack{A\subset[0,1)\ {\rm measurable}\\ |A|=x}}
\|S_1(\mathbbm 1_A)\|_1,
\qquad 0\le x\le1.
\]
The problem is recursive: at each vertex one splits a parent density
$\bar x$ into child densities $x_1,\ldots,x_n$, and pays the local cost
\[
\frac1n\sum_{i=1}^n |x_i-\bar x|,
\qquad
\bar x=\frac1n\sum_{i=1}^n x_i.
\]
We formalize this recursion using Bellman functions.  The Bellman principle
proved below shows that every admissible $n$-point Bellman function gives a
global lower bound for $\|S_\beta(\mathbbm 1_A)\|_\alpha$.  This is the
basic framework of the paper.

For $n=2$, the recursion leads to the classical Takagi function (studied in \cite{alpay2025lower}).  The first
new phenomenon occurs in base $3$.  One might expect the sharp ternary
profile to be the usual ternary Takagi--van der Waerden function, but it is
not.  The reason is that the local cost here is mean absolute deviation, not
the range of the child densities (as in the work by Lev \cite{LevEdgeIso}).  The optimal one-step ternary splitting has
at most one residual child and leads to the generator
\[
\psi_3(t)=
\min\left\{
\frac{1+2\left|t-\frac12\right|}{3},
\frac{2-4\left|t-\frac12\right|}{3}
\right\},
\qquad 0\le t\le1.
\]
Our main sharp result is
\[
V_3(x)=T_3(x):=
\sum_{j=0}^{\infty}3^{-j}\psi_3(\{3^j x\}),
\qquad 0\le x\le1,
\]
where $\{t\}=t-\lfloor t\rfloor$ denotes the fractional part of $t$.
This is a Takagi-type Bellman function, but it is not the usual ternary
Takagi--van der Waerden function $\omega_3$.  For example,
\begin{equation}
\label{eq:TvsW}
    T_3\left(\frac13\right)=\psi_3\left(\frac13\right)=\frac49,
\qquad
\omega_3\left(\frac13\right)=\frac13.
\end{equation}
Thus the sharp ternary profile belongs to the same circle of ideas as digit
sums, Takagi functions, and edge-isoperimetry, but it is a genuinely different
profile.

For general $n$, the sharp profile appears to be more complicated.  Already
for $n=4$, the naive one-residual analogue of the ternary formula fails the
Bellman inequality (see Remark \ref{rmk:ngeq4}).  We therefore prove a general lower bound rather than a
sharp formula in all bases.  Let $\sigma_n(k)$ be the sum of the base-$n$
digits of $k$, and let
\[
G_n(M):=\sum_{k=0}^{M-1}\sigma_n(k).
\]
Using a summatory digit-sum inequality of Allouche and Stipulanti
\cite{AlloucheStipulanti2025Summing}, we construct an admissible Bellman
function $P^{(n)}$.  It has a Takagi-type representation
\[
P^{(n)}(x)=
\sum_{j=0}^{\infty} n^{-j}\eta_n(\{n^j x\}),
\qquad
\eta_n\left(\frac{\rho}{n}\right)
=
\frac{\rho(n-\rho)}{n(n-1)} .
\]
This function dominates the usual $n$-adic Takagi--van der Waerden function
\[
P^{(n)}(x)\ge \omega_n(x),
\qquad 0\le x\le1.
\]
Consequently, for every measurable set $A\subset[0,1)$,
\[
\|S_1(\mathbbm 1_A)\|_1
\ge
P^{(n)}(|A|)
\ge
\omega_n(|A|^*)
\asymp_n
|A|^*\log\frac1{|A|^*}.
\]
For $n=2$ and $n=3$, this construction gives
$P^{(n)}=\omega_n$.  For $n\ge4$, the comparison is generally strict.
The logarithmic order of this lower bound is optimal. More precisely,
for the leftmost $n$-adic intervals
$
A_k=[0,n^{-k}),$ such that $
p_k=n^{-k},
$
we have
\[
P^{(n)}(p_k)=\omega_n(p_k)=kp_k
\quad 
\text{and}
\quad
\|S_1(\mathbbm 1_{A_k})\|_1
=
\frac{2(n-1)}{n}\,kp_k.
\]
Since
\[
kp_k
=
p_k\log_n\frac1{p_k},
\]
this shows that the general lower bound has the correct asymptotic
order as $p_k\to0$. For $n\ge4$, this does not identify the exact
profile; it only proves sharpness up to a constant.

Finally, we consider the quasi-norm range $0<\alpha<1$.  In that range the
logarithmic factor disappears: the boundary-distance function
$x^*=\min\{x,1-x\}$ is itself admissible, and hence
\[
\|S_1(\mathbbm 1_A)\|_\alpha\ge |A|^*.
\]
The leftmost $n$-adic intervals $[0,n^{-k})$ show that this estimate has
the correct order as $|A|\to0$, up to a constant depending only on $n$ and
$\alpha$.

The paper is organized as follows.  Section~2 states the main results.
Section~3 proves the Bellman principle.  Section~4 proves the sharp ternary
profile.  Sections~5 and~6 develop the digit-sum Bellman function and compare
it with $\omega_n$.  Section~7 proves the sub-$L^1$ endpoint theorem.  The
appendix contains the exact verification used in the ternary compression
argument.

\subsection{Framework}
Fix an integer $n\ge2$.  Let $(\Omega,\mathcal L,dx)$ be $[0,1)$ with
Lebesgue measure.  For $m\ge0$, let $\mathcal I_m=\mathcal I_m^{(n)}$ be
the collection of $n$-adic intervals of level $m$,
\[
\mathcal I_m=
\left\{
\left[\frac{k}{n^m},\frac{k+1}{n^m}\right):
k=0,\ldots,n^m-1
\right\}.
\]
Set
\[
\mathcal N_0^{(n)}\subset\mathcal N_1^{(n)}\subset\cdots,
\qquad
\mathcal N_m^{(n)}:=\sigma(\mathcal I_m),
\qquad
\mathcal N:=\bigcup_{m\ge0}\mathcal N_m^{(n)}.
\]
We also write
\[
N_m=N_m^{(n)}:=\left\{\frac{k}{n^m}:k=0,\ldots,n^m\right\},
\qquad
N:=\bigcup_{m\ge0}N_m.
\]
For $f\in L^1[0,1)$ and an interval $I\subset[0,1)$, put
\[
\langle f\rangle_I:=\frac1{|I|}\int_I f(y)\,dy.
\]
The $n$-adic martingale generated by $f$ is
\[
f_m=\mathbb E(f\mid\mathcal N_m)
=\sum_{I\in\mathcal I_m}\mathbbm 1_I\langle f\rangle_I,
\qquad m\ge0.
\]
Let $d_m=f_m-f_{m-1}$ for $m\ge1$.  For $\beta\ge1$, define
\[
S_{\beta,M}(f)=\left(\sum_{m=1}^M |d_m|^\beta\right)^{1/\beta},
\qquad
S_\beta(f)=\lim_{M\to\infty}S_{\beta,M}(f),
\]
with the value $+\infty$ allowed.  When $\beta=1$, this is the one
variation.

The classical Burkholder--Davis--Gundy inequality compares $S_2(f)$ with
$f-\mathbb Ef$ in $L^p$, $1<p<\infty$ \cite{BDS1,BDS2}.  For dyadic
Paley--Walsh martingales one also has conditional symmetry and sharper endpoint
estimates \cite{Wang1,Wang2,Davis}.  For $n\ge3$, a general $n$-adic
martingale generated by an indicator function is not conditionally symmetric:
for instance $A=[0,1/n)$ gives a first martingale difference taking the two
values $1-1/n$ and $-1/n$.

\subsection{Takagi-type functions}
The classical Takagi--Knopp function is
\[
T(x)=\sum_{k=0}^{\infty}2^{-k}\operatorname{dist}(2^kx,\mathbb Z),
\qquad x\in\mathbb R.
\]
It was introduced by Takagi and Knopp \cite{Takagi1903,Knopp1918}, and it has
many connections with digit sums, fractal geometry, and isoperimetric problems
\cite{BorosPales2001,AllaartKawamura2011,allaart2014hausdorff,LevEdgeIso}.
The functions in this paper have the same self-similar form, but with the base
and the one-step generator dictated by the martingale Bellman problem:
\[
T_{\psi,n}(x)=\sum_{k=0}^{\infty}n^{-k}\psi(\{n^kx\}),
\qquad 0\le x\le1.
\]
Here $\psi$ is continuous, non-negative, and vanishes at $0$ and $1$, so
the series converges uniformly.

The usual $n$-adic Takagi--van der Waerden function is
\begin{equation}
\label{eq:wn}
\omega_n(x)=\sum_{k=0}^{\infty}\frac{\|n^kx\|_{1/n}}{n^k},
\qquad
\|t\|_{1/n}:=\min\{\operatorname{dist}(t,\mathbb Z),1/n\}.
\end{equation}
We shall use the following standard facts.

\begin{lemma}\label{thm:wn-properties}
For every $n\ge2$, the function $\omega_n$ has the following properties.
\begin{enumerate}[(a)]
\item The defining series converges uniformly, and hence $\omega_n$ is
continuous on $[0,1]$ \cite[Section 7.2]{AllaartKawamura2011}.
\item If $m=tn+\rho$, where $1\le\rho\le n-1$, then for every integer
$r\ge1$,
\[
\omega_n\left(\frac{m}{n^r}\right)
=\left(1-\frac{\rho}{n}\right)\omega_n\left(\frac{t}{n^{r-1}}\right)
+\frac{\rho}{n}\omega_n\left(\frac{t+1}{n^{r-1}}\right)
+\frac1{n^r}
\]
\cite[Lemma 1]{LevEdgeIso}.
\item $\omega_n(1-x)=\omega_n(x)$ for all $x\in[0,1]$.
\item For $0<x<1$,
\[
\omega_n(x)\asymp_n x^*\log\frac1{x^*},
\qquad x^*:=\min\{x,1-x\}.
\]
\end{enumerate}
\end{lemma}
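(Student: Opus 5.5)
Parts (a) and (b) are cited from \cite{AllaartKawamura2011} and \cite{LevEdgeIso}, but I will also sketch them directly. For (a), since $0\le\|t\|_{1/n}\le 1/n$, the $k$-th term of \eqref{eq:wn} is bounded by $n^{-k-1}$. Each term is continuous in $x$, so the Weierstrass M-test gives uniform convergence and continuity on $[0,1]$.

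For (c), use that $\|\cdot\|_{1/n}$ is even and $1$-periodic. Then $\|n^k(1-x)\|_{1/n}=\|n^k-n^kx\|_{1/n}=\|{-n^kx}\|_{1/n}=\|n^kx\|_{1/n}$, and summing termwise gives the symmetry.

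For (b), I would verify the recursion directly at $n$-adic rationals. Write $x=m/n^r$. The terms with $k\ge r$ vanish, so
\[
\omega_n(m/n^r)=\sum_{k<r}n^{-k}\|n^k m/n^r\|_{1/n}.
\]
I would split off the last term, $k=r-1$. There $\{m/n\}=\rho/n$ and $1\le\rho\le n-1$, so this term equals $n^{-(r-1)}\cdot 1/n=n^{-r}$. The remaining terms ($k\le r-2$) are evaluated at points lying between $t/n^{r-1}$ and $(t+1)/n^{r-1}$, where both endpoints sit in $N_{r-1}$. On that interval each map $x\mapsto\|n^kx\|_{1/n}$ with $k\le r-2$ is affine. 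Its breakpoints lie in $n^{-k}(\mathbb Z\cup(\mathbb Z\pm 1/n))\subset N_{k+1}\subset N_{r-1}$, so none falls inside an interval of length $n^{-(r-1)}$ with endpoints in $N_{r-1}$. Linear interpolation with weights $1-\rho/n$ and $\rho/n$ then gives the stated identity. The same weights also match the $k\le r-2$ parts of $\omega_n(t/n^{r-1})$ and $\omega_n((t+1)/n^{r-1})$, since the $k=r-1$ terms of those vanish.

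For (d), by (c) it suffices to treat $0<x\le1/2$, where $x^*=x$. I will show two bounds.
\begin{itemize}
\item \emph{Lower bound.} Choose $K$ with $n^{-K-1}<x\le n^{-K}$. For $k\le K-1$, $n^kx\le n^{-1}$, hence $\|n^kx\|_{1/n}=n^kx$. This gives $\omega_n(x)\ge Kx\gtrsim_n x\log(1/x)$ when $x\le n^{-2}$. On $[n^{-2},1/2]$ both sides are comparable to constants: $\omega_n$ is continuous and positive there, since its $k=0$ term is $\min\{x,1-x,1/n\}>0$.
\item \emph{Upper bound.} Use $\|t\|_{1/n}\le\min\{n^kx,1/n\}$ for $t=n^kx$. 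Summing, the terms with $k\le K$ contribute at most $(K+1)x$. The terms with $k>K$ contribute at most $\sum_{k>K}n^{-k-1}\lesssim_n n^{-K}\lesssim_n x$. Hence $\omega_n(x)\lesssim_n x(1+\log_n(1/x))\lesssim x\log(1/x)$ for $x\le1/2$.
\end{itemize}

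The main obstacle is (b). The breakpoint bookkeeping must ensure that no kink of a term with $k\le r-2$ falls strictly inside the interval $[t/n^{r-1},(t+1)/n^{r-1}]$. Otherwise the affine-interpolation step fails, so this is where I would be most careful.
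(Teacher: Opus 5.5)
Your proof is correct: the M-test for (a), termwise evenness and $1$-periodicity for (c), the breakpoint check showing that every term with $k\le r-2$ is affine on $[t/n^{r-1},(t+1)/n^{r-1}]$ for (b) (with $r=1$ holding vacuously), and splitting the series at $K\approx\log_n(1/x)$ for (d) all go through. The paper gives no proof of this lemma; it cites \cite{AllaartKawamura2011} and \cite{LevEdgeIso} for (a)--(b) and treats (c)--(d) as standard. Your arguments are the same ones it uses for the analogous properties of $P^{(n)}$ in Lemmas \ref{lem:Pn-symmetry} and \ref{lem:Pn-growth} and in the interpolation step of Lemma \ref{lem:Pn-Takagi-representation}, so your version simply makes the lemma self-contained.
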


\begin{figure}[ht]
\centering
\includegraphics[width=0.8\linewidth]{n-adic.png}
\caption{Comparison of several Takagi-type candidates on $n$-adic grids.  For
$n=2$ the natural candidates coincide.  For $n=3$, the sharp ternary
Bellman function $T_3$ is strictly larger than the usual
Takagi--van der Waerden function $\omega_3$ at some points.  For
$n=4,5$, the picture shows the lower bound $\omega_n$, together with
experimental Bellman profiles and the naive one-residual extension; the latter
fails to be admissible for $n\ge4$.}
\label{fig:nadic}
\end{figure}

\section{Main Results}

Fix parameters $0<\alpha\le1\le\beta$ and an integer $n\ge2$.

\begin{definition}\label{npi}
Let $\mathcal B(\alpha,\beta,n)$ be the family of all continuous
non-negative functions $F:[0,1]\to[0,\infty)$ satisfying
$F(0)=F(1)=0$ and the following $n$-point inequality: for every
$x_1,\ldots,x_n\in[0,1]$,
\begin{equation}\label{eq:npt}
F(\bar x)^\alpha
\le
\frac1n\sum_{i=1}^n
\left(F(x_i)^\beta+|x_i-\bar x|^\beta\right)^{\alpha/\beta},
\qquad
\bar x=\frac1n\sum_{i=1}^n x_i.
\end{equation}
\end{definition}

The first result is the Bellman principle which turns the finite-dimensional
inequality \eqref{eq:npt} into a lower bound for all measurable sets.

\begin{theorem}[Bellman bound]\label{mthm:1}
Let $n\ge2$ and $0<\alpha\le1\le\beta$.  Then for every
$F\in\mathcal B(\alpha,\beta,n)$ and every measurable set
$A\subset[0,1)$,
\[
\|S_\beta(\mathbbm 1_A)\|_\alpha\ge F(|A|).
\]
\end{theorem}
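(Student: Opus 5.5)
We argue by a standard Bellman induction on the depth of the $n$-adic tree, first for finite truncations and then pass to the limit. Write $f=\mathbbm 1_A$. For an $n$-adic interval $I$ of level $m$ and $M\ge m$, set
\[
S_{\beta,m,M}(f)(y):=\Bigl(\sum_{k=m+1}^{M}|d_k(y)|^\beta\Bigr)^{1/\beta},
\]
which on $I$ depends only on the restriction of $f$ to $I$; the case $m=M$ gives the empty sum. The claim to prove, by downward induction on $m$ from $M$ to $0$, is
\begin{equation}\label{eq:bellman-induct}
\frac1{|I|}\int_I\Bigl(F(f_M(y))^\beta+S_{\beta,m,M}(f)(y)^\beta\Bigr)^{\alpha/\beta}dy\ \ge\ F(x_I)^\alpha,\qquad I\in\mathcal I_m.
\end{equation}
The base case $m=M$ is an equality, since $f_M=x_I$ on $I$.

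For the inductive step, fix $I\in\mathcal I_m$ with children $I_1,\ldots,I_n$. On $I_i$ the sum splits as $S_{\beta,m,M}^\beta=|x_{I_i}-x_I|^\beta+S_{\beta,m+1,M}^\beta$, and $x_I$ is the average of the $x_{I_i}$. The key tool is the concavity of $\Phi_c(u):=(u+c)^{\alpha/\beta}$ in $u\ge0$ for fixed $c\ge0$, which holds because $\alpha/\beta\le1$. Combined with Jensen, and with a convexity argument to absorb $F(f_M)^\beta$, this should give
\[
\frac1{|I_i|}\int_{I_i}\Bigl(F(f_M)^\beta+S_{\beta,m+1,M}^\beta+|x_{I_i}-x_I|^\beta\Bigr)^{\alpha/\beta}\ \ge\ \Bigl(F(x_{I_i})^\beta+|x_{I_i}-x_I|^\beta\Bigr)^{\alpha/\beta}.
\]
This step is the main obstacle. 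Jensen for a concave function runs the wrong way, so we cannot simply average. The plan is to strengthen the induction hypothesis so that it holds with an arbitrary extra constant $c\ge0$ added inside the bracket:
\[
\frac1{|I|}\int_I\bigl(F(f_M)^\beta+S_{\beta,m,M}^\beta+c\bigr)^{\alpha/\beta}\ \ge\ \bigl(F(x_I)^\beta+c\bigr)^{\alpha/\beta}.
\]
For this stronger statement we must check that $F\in\mathcal B$ implies the $n$-point inequality with an added constant $c$, namely
\[
\bigl(F(\bar x)^\beta+c\bigr)^{\alpha/\beta}\le\frac1n\sum_{i=1}^n\bigl(F(x_i)^\beta+|x_i-\bar x|^\beta+c\bigr)^{\alpha/\beta}.
\]
If this does not follow directly, we will use a martingale-along-paths argument instead. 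Define $G_m(y):=\bigl(F(f_m(y))^\beta+S_{\beta,m}(f)(y)^\beta\bigr)$, where $S_{\beta,m}$ is the full square function up to level $m$. The goal is then to show that $\int G_m^{\alpha/\beta}$ is nondecreasing in $m$. That reduces to a conditional inequality with $c=S_{\beta,m}^\beta$, which is constant on $I$. So the $c$-augmented $n$-point inequality is exactly what is needed in either approach. Note also that $\alpha=\beta=1$ requires nothing extra, and if $\alpha/\beta<1$ the augmented inequality would be derived from the case $c=0$ via the subadditivity $(a+c)^{s}\le a^s+c^s$, rescaling as needed.

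Once monotonicity holds, we get $\int\bigl(F(f_M)^\beta+S_{\beta,M}^\beta\bigr)^{\alpha/\beta}\ge F(|A|)^\alpha$, since $G_0=F(|A|)^\beta$. To conclude, let $M\to\infty$. By the martingale convergence theorem $f_M\to\mathbbm 1_A$ a.e., so $F(f_M)\to F(\mathbbm 1_A)=0$ a.e. by continuity of $F$ and $F(0)=F(1)=0$. Moreover $F$ is bounded and $S_{\beta,M}\uparrow S_\beta$. If $\|S_\beta\|_\alpha=\infty$ there is nothing to prove. Otherwise the integrand is dominated by $(\|F\|_\infty^\beta+S_\beta^\beta)^{\alpha/\beta}\le\|F\|_\infty^\alpha+S_\beta^\alpha$, which is integrable, so dominated convergence yields $\int S_\beta(f)^\alpha\ge F(|A|)^\alpha$, that is, $\|S_\beta(\mathbbm 1_A)\|_\alpha\ge F(|A|)$.
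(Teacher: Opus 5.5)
Your skeleton is the same as the paper's. You strengthen by a constant tail $c$ (the paper's $q^\beta$), show that $\int\bigl(F(f_M)^\beta+S_{\beta,M}(f)^\beta+c\bigr)^{\alpha/\beta}$ does not decrease with $M$, and let $M\to\infty$. Your single dominated-convergence step at the end is fine; it is a legitimate alternative to the paper's splitting via $(a+b)^r\le a^r+b^r$ followed by dominated and monotone convergence. The gap is the step you yourself flag as the obstacle: the $c$-augmented $n$-point inequality is never established, and the route you propose for it does not work.

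Put $s=\alpha/\beta$, $X_i=F(x_i)^\beta+|x_i-\bar x|^\beta$ and $M_s(X)=\bigl(\frac1n\sum_i X_i^s\bigr)^{1/s}$. Admissibility gives $F(\bar x)^\beta\le M_s(X)$. You need $\bigl(F(\bar x)^\beta+c\bigr)^s\le\frac1n\sum_i(X_i+c)^s$, which requires a \emph{lower} bound on the right side.

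Scalar subadditivity $(a+c)^s\le a^s+c^s$ only produces upper bounds. It gives $\bigl(F(\bar x)^\beta+c\bigr)^s\le\frac1n\sum_iX_i^s+c^s$, but it equally gives $\frac1n\sum_i(X_i+c)^s\le\frac1n\sum_iX_i^s+c^s$, so nothing can be chained. Concretely, take $n=2$, $s=\frac12$, $X=(0,4)$, $c=1$. The subadditive bound on the left is $2$, while $\frac12\sum_i(X_i+c)^{1/2}=\frac{1+\sqrt5}2<2$. Since both sides are homogeneous of the same degree in $(X,c)$, no rescaling helps.

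The missing ingredient is the vector inequality $M_s(X+c)\ge M_s(X)+c$ for $0<s\le1$, where $X+c=(X_1+c,\dots,X_n+c)$. This is superadditivity of the power mean, i.e. a reverse Minkowski inequality with one argument constant. It is the paper's Lemma~\ref{lem:common-tail}, proved by showing that $c\mapsto M_s(X+c)-c$ is nondecreasing; its derivative is nonnegative by H\"older with exponents $1/(1-s)$ and $1/s$. With it, $F(\bar x)^\beta+c\le M_s(X)+c\le M_s(X+c)$, and raising to the power $s$ gives the augmented inequality (Lemma~\ref{lem:Bellman_q_extension}). After that, your induction and limit argument go through as written.
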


When needed, we write
\[
B_{\alpha,\beta,n}(x):=\sup\{F(x):F\in\mathcal B(\alpha,\beta,n)\}.
\]
No continuity or admissibility of this pointwise envelope is needed in the sequel.
For completeness, one may also construct a maximal continuous admissible
representative by the same Perron-envelope argument as in
\cite{alpay2025lower}, adapted to the present $n$-point inequality; we do
not use this additional fact here.

The main theorem of the paper is the exact solution in the ternary case.

\begin{theorem}[Sharp ternary profile]\label{mthm:2}
For $n=3$ and $\alpha=\beta=1$, for every $0\le x\le1$,
\[
\inf_{\substack{A\subset[0,1)\ {\rm measurable}\\ |A|=x}}
\|S_1(\mathbbm 1_A)\|_1
=
T_3(x):=\sum_{j=0}^{\infty}3^{-j}\psi_3(\{3^jx\}),
\]
where $\{t\}=t-\lfloor t\rfloor$ denotes the fractional part of $t$, and
\[
\psi_3(t)=
\min\left\{
\frac{1+2\left|t-\frac12\right|}{3},
\frac{2-4\left|t-\frac12\right|}{3}
\right\},
\qquad 0\le t\le1.
\]
Moreover $B_{1,1,3}=T_3$, and for $0<x<1$,
\[
T_3(x)\asymp x^*\log\frac1{x^*},
\qquad x^*=\min\{x,1-x\}.
\]
\end{theorem}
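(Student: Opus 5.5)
The plan is to prove the two inequalities $\inf\|S_1(\mathbbm 1_A)\|_1\ge T_3$ and $\le T_3$ separately, and then to obtain the asymptotics by comparing $T_3$ with $\omega_3$. The starting point is the self-similarity identity
\[
T_3(x)=\psi_3(x)+\tfrac13\,T_3(\{3x\}),
\]
which follows directly from the series. Together with the elementary computation of $\psi_3$ on each third of $[0,1]$, it identifies $\psi_3(x)$ as the cost of an explicit ``one residual child'' splitting.

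\emph{Upper bound (construction).} Fix $x$ and write $x=(k+t)/3$ with $k\in\{0,1,2\}$ and $t=\{3x\}$. Give the three children of $[0,1)$ the densities $(1,\dots,1,t,0,\dots,0)$: $k$ full children, then the residual child of density $t$, then empty children. The costs are as follows.
\begin{itemize}
\item For $k=0$, the triple $(t,0,0)$ has mean absolute deviation $\tfrac13(2x+2x)=\tfrac43x$.
\item For $k=1$, the triple $(1,t,0)$ has mean absolute deviation $\tfrac13(1+|2x-1|)$.
\item For $k=2$, the cost follows by symmetry.
\end{itemize}
In each case this equals $\psi_3(x)$. One checks that the other branch of the minimum is the larger one on the relevant third; for instance, for $x\le 1/3$ we have $4x/3\le(2-2x)/3$.

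Recurse inside the residual child, whose density is $\{3x\}$. This produces a set $A$, a union of triadic intervals plus a limit point set, with $|A|=x$. Its level-$j$ contribution to $\|S_1\|_1$ is exactly $3^{-j}\psi_3(\{3^jx\})$. Since $\psi_3\le 2/3$, the tail beyond level $J$ is $O(3^{-J})$. Monotone convergence of $S_{1,M}$ then gives $\|S_1(\mathbbm 1_A)\|_1=T_3(x)$.

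\emph{Lower bound.} By Theorem~\ref{mthm:1} it suffices to show $T_3\in\mathcal B(1,1,3)$, that is,
\[
T_3(\bar x)\le \tfrac13\sum_{i=1}^3\bigl(T_3(x_i)+|x_i-\bar x|\bigr).
\]
$T_3$ is continuous with $T_3(0)=T_3(1)=0$, and by continuity it is enough to prove the inequality for $x_i\in N_r$ for all $r$. I would argue by induction on $r$, using the self-similarity identity. Expand each $T_3(x_i)$ as $\psi_3(x_i)+\tfrac13T_3(\{3x_i\})$. The aim is then a ``compression'' step: for fixed $\bar x$, the right-hand side does not increase when the triple is replaced by a triple with at most one coordinate outside $\{0,1\}$. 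Such a triple realizes exactly the one-residual splitting above, and for it the inequality is an equality.

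To carry out the compression I would move mass between two non-extremal coordinates, keeping the mean fixed. I would control the change using the piecewise-linear structure of $\psi_3$ and the induction hypothesis applied at the finer scale to the rescaled densities $\{3x_i\}$. This reduces the problem to finitely many linear-inequality checks on the cells where all $x_i$ lie in fixed thirds and the ordering of the $x_i$ relative to $\bar x$ is fixed. These are the verifications deferred to the appendix. This step is the main obstacle: $T_3$ is neither concave nor convex, and I do not see a short argument. If the two-coordinate compression fails in some cell, I would instead verify the three-point inequality directly on that cell, using the recursion once more.

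\emph{Remaining claims and asymptotics.} The two bounds give $\inf\|S_1(\mathbbm 1_A)\|_1=T_3$. Since $T_3$ is admissible, $T_3\le B_{1,1,3}$. Conversely, every admissible $F$ satisfies $F(x)\le\|S_1(\mathbbm 1_A)\|_1$ for the extremal set constructed above, so $B_{1,1,3}\le T_3$.

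For the asymptotics, compare generators with $\|t\|_{1/3}$:
\begin{itemize}
\item on $[0,1/3]$, $\psi_3(t)=4t/3\in[\,t,2t\,]$;
\item on $[1/3,2/3]$, $\psi_3(t)\in[4/9,2/3]\subset[1/3,2/3]$;
\item on $[2/3,1]$, the bounds follow by symmetry.
\end{itemize}
Hence $\|t\|_{1/3}\le\psi_3(t)\le2\|t\|_{1/3}$, so $\omega_3\le T_3\le2\omega_3$. Lemma~\ref{thm:wn-properties}(d) then gives $T_3(x)\asymp x^*\log(1/x^*)$.
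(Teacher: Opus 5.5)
Your upper-bound construction and the deduction $B_{1,1,3}=T_3$ from it match the paper. The asymptotic argument via $\omega_3\le T_3\le 2\omega_3$ is a valid alternative, with one slip: on $[1/3,2/3]$ one has $\psi_3\in[1/3,4/9]$, e.g.\ $\psi_3(1/2)=1/3$, not $[4/9,2/3]$; the sandwich $\|t\|_{1/3}\le\psi_3(t)\le2\|t\|_{1/3}$ still holds.

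The genuine gap is the lower bound $T_3\in\mathcal B(1,1,3)$. This is the real content of the theorem, you leave it as a plan, and the plan as described cannot close. Suppose the induction hypothesis is the Bellman inequality for the rescaled triple $\rho_i=\{3x_i\}$. Expand with $T_3(y)=\psi_3(y)+\frac13T_3(\{3y\})$ and use that $\bar\rho$ and $\{3\bar x\}$ differ by a multiple of $1/3$. This gives the exact identity
\[
\mathrm{defect}(x)=\tfrac13\,\mathrm{defect}(\rho)+\tfrac19\,\mathcal L_a(\rho),
\]
where $a_i$ are the leading ternary digits of the $x_i$ and $\mathcal L_a$ is the paper's local remainder. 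Knowing only $\mathrm{defect}(\rho)\ge0$, you would need $\mathcal L_a(\rho)\ge0$ on every cell, and this is false.

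Take $x=(2/9,2/9,1/3)$, so $\bar x=7/27$. Here the Bellman inequality is an equality: $T_3(7/27)=40/81$ equals the right-hand side. Yet the rescaled triple $(2/3,2/3,0)$ has defect $2/27>0$, and $\mathcal L_{(0,0,1)}(2/3,2/3,0)=-2/9$. So no cellwise check of a one-step inequality can succeed; the surplus at the finer scale must be carried forward quantitatively. ``Using the recursion once more'' only pushes the same problem one level down unless you have an invariant that propagates.

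A continuous transfer of mass between two coordinates is also hopeless. At every ternary rational, $T_3$ minus any affine function has a strict local minimum, because its increments are of order $|h|\log(1/|h|)$. Hence small transfers strictly increase the right-hand side, and the compression cannot be a monotone deformation.

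The missing idea is the paper's strengthened induction. In scaled form the identity above reads $\Delta_m(k)=\Delta_{m-1}(r)+H\mathcal L_a(\rho)$. One then proves not merely $\Delta_m\ge0$ but
\[
\Delta_m(k_1,k_2,k_3)\ge L\,\Gamma(k_1/L,k_2/L,k_3/L),
\]
with the debt $\Gamma(r)=\max\{0,-\mathcal L_b(r):b\in\{0,1,2\}^3\}$. In words, the defect at each scale must pre-pay the worst negative remainder the next step can produce; in the example, $\frac13\cdot\frac2{27}$ exactly cancels $\frac19\mathcal L_{(0,0,1)}(\rho)$. The induction closes by the local debt inequality $\Gamma(r)+\mathcal L_a(r)\ge3\Gamma\bigl(\frac{a+r}{3}\bigr)$, together with $\Gamma=0$ on $\{0,1\}^3$ for the base case. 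This amounts to $729$ piecewise-affine inequalities on $[0,1]^3$, which the paper verifies exactly by quantifier elimination. Without such an invariant, or some other genuinely new argument, your proposal establishes only $V_3\le T_3$, not $V_3\ge T_3$.
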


\begin{remark}
\label{rmk:sharpness-omega3}
The lower bound in Theorem \ref{mthm:2} is not the usual ternary
Takagi--van der Waerden function, see \eqref{eq:TvsW}.
Thus $\omega_3$ has the correct logarithmic order, but it is not the sharp
Bellman function for the mean-deviation one-variation problem.
\end{remark}

\begin{remark}
\label{rmk:ngeq4}
For $n\ge 4$, the same Takagi-type viewpoint remains useful, but the sharp profile is more complicated. A natural first attempt is to use the one-residual generator
\[
\psi_n(t)
=
\begin{cases}
\dfrac{2k(1-t)}{n},
&
\dfrac{k}{n}\le t\le \dfrac{k}{n-1},
\\[2ex]
\dfrac{2(n-k-1)t}{n},
&
\dfrac{k}{n-1}\le t\le \dfrac{k+1}{n}
\end{cases},
\]
for $k=0,1,\dots,n-1$, on the interval
$
\tfrac{k}{n}\le t\le \tfrac{k+1}{n}$. 
However, this naive $n$-adic analogue does not give an admissible lower-bound Bellman function for $S_1$. See Figure \ref{fig:nadic}.
 
To see the obstruction, take $n=4$ and 
\[
T_{\rm naive}(x):=\sum_{j=0}^{\infty}4^{-j}\psi_4(\{4^jx\}),
\]
where $\psi_4$ is the one-residual generator above.
Consider the combination
\[
(x_1,x_2,x_3,x_4)=\left(0,\frac14,\frac5{16},1\right),
\qquad
\bar x=\frac{25}{64}.
\]
The left-hand side of the Bellman inequality \eqref{eq:npt} is
\[
T_{\rm naive}(\bar x)=\frac{67}{128},
\]
whereas the right-hand side is
\[
\frac14\sum_{i=1}^4T_{\rm naive}(x_i)
+
\frac14\sum_{i=1}^4|x_i-\bar x|
=
\frac{65}{128}.
\]
Thus $T_{\text{naive}}$ fails the Bellman inequality \eqref{eq:npt} by $1/64$. This is why the general
$n$-adic theorem below is stated as a robust lower bound rather than as the
sharp profile.
\end{remark}

For general $n$, the following theorem gives the main lower bound.

\begin{theorem}[Takagi--van der Waerden lower bound]\label{mthm4}
Let $n\ge2$, $\alpha=\beta=1$, and let $A\subset[0,1)$ be measurable
with $|A|=x$.  Then
\[
\|S_1(\mathbbm 1_A)\|_1
\ge
\omega_n(x^*)
\asymp_n
x^*\log\frac1{x^*},
\qquad
x^*=\min\{x,1-x\}.
\]
Moreover, the lower bound is sharp up to a constant. That is there exists a constant $c_n$ depending on $n$ such that for $A_k = [0,n^{-k})$, we have
\[
\|S_1(\mathbbm 1_{A_k})\|_1= c_n \omega_n(n^{-k}),\quad k\geq 1.
\]
For $n=2$, this recovers the dyadic Takagi lower bound in the present
normalization.
\end{theorem}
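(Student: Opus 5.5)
The plan is to combine the Bellman principle of Theorem~\ref{mthm:1} with an explicit admissible function that dominates $\omega_n$, and then settle sharpness by computing on the intervals $A_k$ directly.

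\textbf{Choice of Bellman function.} Following the introduction, I take
\[
P^{(n)}(x)=\sum_{j\ge0}n^{-j}\eta_n(\{n^jx\}),
\]
where $\eta_n$ is the piecewise linear interpolation of the values $\eta_n(\rho/n)=\rho(n-\rho)/(n(n-1))$. On the grid $N$ one has the closed form
\[
P^{(n)}\!\left(\frac{M}{n^r}\right)=\frac{1}{n^r}\cdot\frac{\text{(a digit-sum expression in }G_n)}{n-1},
\]
obtained by unwinding the self-similar recursion. Admissibility in $\mathcal B(1,1,n)$ requires
\[
P(\bar x)\le \frac1n\sum_i P(x_i)+\frac1n\sum_i|x_i-\bar x|.
\]
By continuity and density it suffices to check this for $x_i\in N_r$, all with the same $r$.

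\textbf{Reduction to digit sums.} After multiplying by $n^{r+1}$, the inequality becomes a statement about integers $M_i$ with $M=\sum M_i$. It reads
\[
G\text{-type}(M)\le\sum_i G\text{-type}(M_i)+(n-1)\sum_i|nM_i-M|/n,
\]
up to normalizing constants. This is the form of the Allouche--Stipulanti summatory inequality, which I invoke as a black box. Fixing the exact constants, in particular identifying $\eta_n$ as the increment of $G_n$ across one digit, is bookkeeping. I expect \textbf{this matching to be the main obstacle}. If the direct match fails, the fallback is an induction on $r$: peel off the last digit using the self-similarity $P(x)=\eta_n(x)+n^{-1}\sum\cdots$, and reduce the $n$-point inequality to a one-level concavity/mean-deviation estimate for $\eta_n$.

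\textbf{Comparison with $\omega_n$.} Next I show $P^{(n)}\ge\omega_n$. Both functions satisfy a recursion of the form in Lemma~\ref{thm:wn-properties}(b) on grid points $m/n^r$, with $m=tn+\rho$. For $\omega_n$ the additive term is $n^{-r}$. For $P^{(n)}$ the additive term is at least $n^{-r}$, because $\eta_n(\rho/n)$ combined with the interpolation defect is at least $\min(\rho,n-\rho)/n\cdot$const $\ge$ the $\omega_n$ term. Induction on $r$ plus continuity then gives the comparison. Theorem~\ref{mthm:1} yields $\|S_1\mathbbm 1_A\|_1\ge P^{(n)}(x)\ge\omega_n(x)$. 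Since $\omega_n$ is symmetric (Lemma~\ref{thm:wn-properties}(c)), this equals $\omega_n(x^*)$, and part (d) gives the asymptotics.

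\textbf{Sharpness.} For $A_k=[0,n^{-k})$, the only nonzero differences occur at levels $m\le k$ on the nested intervals $[0,n^{-m+1})$, where the parent density is $n^{m-1-k}$. The child containing $0$ has density $n^{m-k}$ and the others have density $0$. The contribution at level $m$ is
\[
n^{-(m-1)}\cdot\frac1n\Big[(n^{m-k}-n^{m-1-k})+(n-1)n^{m-1-k}\Big]=\frac{2(n-1)}{n}n^{-k}.
\]
Summing over $m=1,\dots,k$ gives $\tfrac{2(n-1)}{n}kn^{-k}$.

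Finally, $\omega_n(n^{-k})=kn^{-k}$: the terms $j=0,\dots,k-1$ each contribute $n^{-j}\cdot n^{j-k}$, since $n^{j-k}\le 1/n$, and the terms $j\ge k$ vanish. Hence $c_n=2(n-1)/n$. For $n=2$, $\omega_2$ is the Takagi function, which recovers the dyadic bound.
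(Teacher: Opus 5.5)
Your overall plan matches the paper: the same $P^{(n)}$, the Allouche--Stipulanti inequality as a black box, Theorem~\ref{mthm:1}, symmetry of $\omega_n$, and a direct computation on $A_k$. Your sharpness computation (giving $c_n=2(n-1)/n$) and the value $\omega_n(n^{-k})=kn^{-k}$ are correct. Your grid-recursion comparison with $\omega_n$ also works. The real reason is that the additive term for $P^{(n)}$ at $(tn+\rho)/n^r$ is exactly $\rho(n-\rho)/((n-1)n^r)$, and $\rho(n-\rho)-(n-1)=(\rho-1)(n-\rho-1)\ge0$; this is the same fact as the paper's pointwise bound $\eta_n(t)\ge\|t\|_{1/n}$.

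The gap is in the step you call bookkeeping, which is the actual content of the lower bound: proving $P^{(n)}\in\mathcal B(1,1,n)$. First, you need the explicit grid formula
\[
P^{(n)}\Bigl(\frac{k}{n^m}\Bigr)=\frac{mk}{n^m}-\frac{2}{(n-1)n^m}\,G_n(k),
\]
together with a proof that it agrees with the Takagi series. That proof uses $G_n(nq+\rho)=nG_n(q)+\frac{n(n-1)}2q+\rho\sigma_n(q)+\frac{\rho(\rho-1)}2$, which produces exactly the increment $n^{-(m-1)}\eta_n(\rho/n)$. With this formula, multiplying by $n^{m+1}$ cancels the $m$-terms, and the $n$-point inequality on $N_m$ becomes
\[
G_n(K)-\sum_{i=1}^nG_n(k_i)\ \ge\ \frac{n-1}{2}\Bigl(K-\sum_{i=1}^n\Bigl|k_i-\frac Kn\Bigr|\Bigr),\qquad K=\sum_{i=1}^nk_i .
\]
This is \emph{not} the form of the Allouche--Stipulanti inequality (Lemma~\ref{lem:AS-summatory-digit-sum}). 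That inequality contains no absolute values, only the order-dependent linear term $\sum_{i=1}^{n-1}(n-i)k_i$ for $k_1\le\dots\le k_n$.

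The missing bridge is short but essential. After sorting, $\frac{n-1}2K-\sum_{i<n}(n-i)k_i=\frac{n-1}2\sum_i a_ik_i$ with $a_i=\frac{2i-n-1}{n-1}$. Since $\sum_ia_i=0$ and $|a_i|\le1$, you get $\sum_ia_ik_i=\sum_ia_i(k_i-\frac Kn)\le\sum_i|k_i-\frac Kn|$. Without this step, or an equivalent one, invoking the black box does not give the Bellman inequality.

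Your fallback does not close this gap by itself. Peeling off one digit via $P(x)=\eta_n(x)+\frac1nP(\{nx\})$ runs into two problems:
\begin{enumerate}
\item Because of carries, $\{n\bar x\}$ is not the average of the $\{nx_i\}$, so the next-scale configuration is not the one your induction hypothesis controls.
\item $\eta_n$ is concave, so the one-level comparison of $\eta_n(\bar x)$ with $\frac1n\sum_i\eta_n(x_i)$ goes the wrong way. The deviation term would have to be shared between scales in a controlled manner.
\end{enumerate}
In the ternary section, exactly this kind of peeling leaves negative local remainders $\mathcal L_a$; that is why the paper needs the debt function $\Gamma$ and $729$ exact checks there. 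As written, your proposal does not establish $P^{(n)}\in\mathcal B(1,1,n)$, and hence does not establish $\|S_1(\mathbbm 1_A)\|_1\ge\omega_n(x^*)$.
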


To prove Theorem \ref{mthm4}, we do not try to verify the Bellman inequality
for $\omega_n$ directly, this proves to be a hard task.
Instead we build a larger admissible function from
summatory digit sums.  This is the arithmetic part of the paper, and it is
also the point at which the analogy with Hart's digit-sum formula becomes most
visible.\\

For $k\ge0$, write its base-$n$ expansion as
\[
k=\sum_{j=0}^{m-1}\varepsilon_j(k)n^j,
\qquad \varepsilon_j(k)\in\{0,1,\ldots,n-1\},
\]
and set
\[
\sigma_n(k):=\sum_{j=0}^{\infty}\varepsilon_j(k),
\qquad
G_n(M):=\sum_{k=0}^{M-1}\sigma_n(k),
\qquad G_n(0):=0.
\]
For $x\in N_m$, define
\begin{equation}\label{eq:Bn-m-def}
B_m^{(n)}(x):=
mx-
\frac{2}{(n-1)n^m}G_n(n^mx).
\end{equation}

\begin{theorem}[Digit-sum Bellman function]\label{Thm:digit-sum-Pn}
Using \eqref{eq:Bn-m-def}, define
\[
P^{(n)}(x):=\lim_{m\to\infty}B_m^{(n)}(x),
\qquad x\in N.
\]
Then:
\begin{enumerate}
\item If $x\in N_{m_0}$, then
\[
B_m^{(n)}(x)=B_{m_0}^{(n)}(x)
\qquad \text{for every }m\ge m_0.
\]
In particular the limit defining $P^{(n)}$ exists on $N$.

\item $P^{(n)}(0)=P^{(n)}(1)=0$.

\item For every $x_1,\ldots,x_n\in N$,
\[
P^{(n)}(\bar x)
\le
\frac1n\sum_{i=1}^nP^{(n)}(x_i)
+
\frac1n\sum_{i=1}^n|x_i-\bar x|,
\qquad
\bar x=\frac1n\sum_{i=1}^n x_i.
\]

\item The function $P^{(n)}$ extends continuously to $[0,1]$, still
denoted by $P^{(n)}$, and $P^{(n)}\in\mathcal B(1,1,n)$.  Hence for every
measurable set $A\subset[0,1)$,
\[
P^{(n)}(|A|^*)\le\|S_1(\mathbbm 1_A)\|_1.
\]

\item The continuous extension has the Takagi-type representation
\[
P^{(n)}(x)=\sum_{j=0}^{\infty}n^{-j}\eta_n(\{n^jx\}),
\]
where $\eta_n:[0,1]\to[0,\infty)$ is the continuous piecewise-linear
function determined by
\[
\eta_n\left(\frac{\rho}{n}\right)=\frac{\rho(n-\rho)}{n(n-1)},
\qquad \rho=0,1,\ldots,n.
\]
\item The continuous extension is symmetric:
\[
P^{(n)}(1-x)=P^{(n)}(x),\qquad 0\le x\le1.
\]
\item For $0<x<1$,
\[
P^{(n)}(x)\asymp_n x^*\log\frac1{x^*},
\qquad x^*=\min\{x,1-x\}.
\]
\end{enumerate}
\end{theorem}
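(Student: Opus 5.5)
The plan is to work first on the grids $N_m$, where everything reduces to exact identities for the summatory digit sum $G_n$, and then pass to $[0,1]$ by continuity.

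\emph{Stability, part (1).} Let $x=M/n^m$. Then $n^{m+1}x=nM$. Splitting $k<nM$ as $k=nq+r$ with $0\le r\le n-1$ and using $\sigma_n(nq+r)=\sigma_n(q)+r$ gives
\[
G_n(nM)=nG_n(M)+M\frac{n(n-1)}{2}.
\]
Substituting into \eqref{eq:Bn-m-def},
\[
B_{m+1}^{(n)}(x)=(m+1)x-\frac{2}{(n-1)n^{m+1}}\Bigl(nG_n(M)+\tfrac{n(n-1)}{2}M\Bigr)=B_m^{(n)}(x),
\]
and induction gives (1).

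\emph{Endpoints, Takagi form and symmetry, parts (2), (5), (6).} We have $G_n(0)=0$. The digit sums over $0\le k<n^m$ total $m n^m(n-1)/2$, so $B_m^{(n)}(1)=m-m=0$, which is (2). For (5), write $M=tn+\rho$ and use
\[
G_n(tn+\rho)=G_n(tn)+\rho\,\sigma_n(t)+\tfrac{\rho(\rho-1)}{2}.
\]
Together with the scaling identity this yields a one-step recursion of Lev type, mirroring Lemma~\ref{thm:wn-properties}(b):
\[
P^{(n)}\Bigl(\tfrac{M}{n^r}\Bigr)=\Bigl(1-\tfrac{\rho}{n}\Bigr)P^{(n)}\Bigl(\tfrac{t}{n^{r-1}}\Bigr)+\tfrac{\rho}{n}P^{(n)}\Bigl(\tfrac{t+1}{n^{r-1}}\Bigr)+\tfrac{\rho(n-\rho)}{(n-1)n^r}.
\]
The same recursion is satisfied by $\sum_j n^{-j}\eta_n(\{n^jx\})$ on $N$. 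Indeed, $\eta_n$ is linear between consecutive points $\rho/n$, so the top term interpolates, and the remaining terms are the series at scale $r-1$. Since the two functions agree at $0$ and $1$, they agree on $N$. The series converges uniformly and is continuous, which provides the continuous extension claimed in (4) and proves (5). Symmetry follows from $\eta_n(1-t)=\eta_n(t)$ and $\{n^j(1-x)\}=1-\{n^jx\}$ away from a null set of points, then by continuity; this is (6).

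\emph{Comparison and asymptotics, part (7).} We have $\eta_n(t)\ge\|t\|_{1/n}$ at the nodes: $\rho(n-\rho)/(n(n-1))\ge 1/n$ for $1\le\rho\le n-1$. Both sides are piecewise linear on the mesh $1/n$, so the inequality holds everywhere and $P^{(n)}\ge\omega_n$. For the upper bound, $\eta_n(t)\le C_n\min(t,1-t)$, and the standard Takagi estimate (terms $j\le\log_n(1/x)$ each contribute $\lesssim x$, the rest form a geometric tail) gives $P^{(n)}\lesssim x^*\log(1/x^*)$. Combining these with Lemma~\ref{thm:wn-properties}(d) gives (7).

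\emph{The $n$-point inequality, parts (3) and (4); main obstacle.} Take the $x_i$ in a common $N_m$, enlarging $m$ so that $\bar x\in N_m$ as well; since $n\bar x=\sum x_i$, one extra level suffices. After multiplying by $n^m$, with integers $M_i=n^mx_i$ and $\bar M=\frac1n\sum M_i$, the linear terms cancel. The inequality reduces to
\[
\frac1n\sum_i G_n(M_i)-G_n(\bar M)\le\frac{n-1}{2}\cdot\frac1n\sum_i|M_i-\bar M|.
\]
This is a convexity-type defect bound for summatory digit sums, of the form $G_n(a)+G_n(b)-2G_n(\frac{a+b}{2})\lesssim |a-b|$, generalized to $n$ points. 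This is exactly the Allouche--Stipulanti inequality the paper cites, and I would invoke it, possibly after reducing from $n$ points to pairwise moves by a smoothing argument. This is where I expect the difficulty: matching the precise constant $(n-1)/2$ with the mean absolute deviation, rather than only the range. Once (3) holds on $N$, density and continuity extend it to all of $[0,1]$, so $P^{(n)}\in\mathcal B(1,1,n)$. Theorem~\ref{mthm:1} then gives $\|S_1(\mathbbm 1_A)\|_1\ge P^{(n)}(|A|)=P^{(n)}(|A|^*)$ by symmetry, completing (4).
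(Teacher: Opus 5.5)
Your handling of parts (1), (2), (4), (5), (6) and (7) is sound and follows the paper's route: the scaling identity $G_n(nM)=nG_n(M)+\frac{n(n-1)}2M$, the Lev-type recursion with additive term $\frac{\rho(n-\rho)}{(n-1)n^r}$, uniform convergence of the series, symmetry of $\eta_n$, and comparison with $\omega_n$ and $\operatorname{dist}(t,\mathbb Z)$. Your reduction of (3) to an integer inequality with $\bar M\in\mathbb Z$ is also correct.

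\textbf{The gap.} It lies in (3) itself, which is the heart of the theorem. You call your reduced inequality ``exactly'' the Allouche--Stipulanti inequality, and then leave open how to match the constant $(n-1)/2$ with the mean absolute deviation. These are not the same statement. Allouche--Stipulanti bounds the superadditivity defect $G_n(\sum_iM_i)-\sum_iG_n(M_i)$ from below by the sorted linear form $\sum_{i=1}^{n-1}(n-i)M_i$. There $G_n$ is evaluated at the sum rather than the average, and no mean deviation appears. So a bridging argument is required. The smoothing to pairwise moves you float is unnecessary, and it is not clear it would preserve the exact constant.

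\textbf{The missing step.} Sort $M_1\le\cdots\le M_n$ and set $K=\sum_iM_i=n\bar M$. Use the scaling identity $G_n(K)=nG_n(\bar M)+\frac{n(n-1)}2\bar M$; then Allouche--Stipulanti gives
\[
\frac1n\sum_{i=1}^nG_n(M_i)-G_n(\bar M)
=\frac1n\Bigl(\sum_{i=1}^nG_n(M_i)-G_n(K)\Bigr)+\frac{n-1}{2n}K
\le\frac{n-1}{2n}\sum_{i=1}^na_iM_i,
\qquad a_i:=\frac{2i-n-1}{n-1}.
\]
Since $\sum_ia_i=0$ and $|a_i|\le1$, one has $\sum_ia_iM_i=\sum_ia_i(M_i-\bar M)\le\sum_i|M_i-\bar M|$. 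This is exactly your target inequality.

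This is the paper's argument, with one difference. The paper compares $B^{(n)}_{m+1}(\bar x)$ with $B^{(n)}_m(x_i)$, keeping $\bar x$ at level $m+1$, so $G_n(K)$ appears directly and the scaling identity is not needed. With this step inserted, your proof is complete.
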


The comparison with $\omega_n$ is pointwise at the level of the generators.

\begin{corollary}\label{cor:P-dominates-omega}
For every $n\ge2$ and every $x\in[0,1]$,
\[
P^{(n)}(x)\ge \omega_n(x).
\]
where equality holds at $x=n^{-k}$ for $k\geq 1$.
Moreover,
\[
P^{(2)}=\omega_2,
\qquad
P^{(3)}=\omega_3.
\]
For $n\ge4$, the comparison is generally strict.
\end{corollary}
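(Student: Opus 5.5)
Both $P^{(n)}$ and $\omega_n$ are Takagi-type series in the same base $n$: by Theorem~\ref{Thm:digit-sum-Pn}(5),
\[
P^{(n)}(x)=\sum_{j\ge0}n^{-j}\eta_n(\{n^jx\}),
\qquad
\omega_n(x)=\sum_{j\ge0}n^{-j}\|n^jx\|_{1/n},
\]
and $\|t\|_{1/n}$ depends only on $\{t\}$. So I will compare the generators termwise. The main claim reduces to the pointwise inequality
\[
\eta_n(t)\ge g_n(t):=\min\{t,1-t,1/n\},\qquad t\in[0,1].
\]

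\textbf{Generator comparison.} Both $\eta_n$ and $g_n$ are piecewise linear with nodes in $\{\rho/n\}$. Indeed, $g_n$ has kinks only at $1/n$ and $(n-1)/n$, and for $n=2$ it is just the tent, with its kink at $1/2$. So it suffices to compare them at the nodes $t=\rho/n$. At a node,
\[
\eta_n(\rho/n)=\frac{\rho(n-\rho)}{n(n-1)}
\quad\text{and}\quad
g_n(\rho/n)=\frac{\min\{\rho,n-\rho,1\}}{n}.
\]
For $\rho=0$ or $\rho=n$ both values are $0$. For $1\le\rho\le n-1$ we have $\rho(n-\rho)\ge n-1$, because $(\rho-1)(n-1-\rho)\ge0$. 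Hence
\[
\eta_n(\rho/n)\ge \frac1n=g_n(\rho/n).
\]
Linearity on each $[\rho/n,(\rho+1)/n]$ then gives $\eta_n\ge g_n$ everywhere. Summing termwise, with uniform convergence from Lemma~\ref{thm:wn-properties}(a) and Theorem~\ref{Thm:digit-sum-Pn}(5), yields $P^{(n)}\ge\omega_n$ on $[0,1]$.

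\textbf{Equality cases.} The node inequality is an equality exactly when $\rho\in\{0,1,n-1,n\}$. For $n=2,3$ this covers every node, so $\eta_n=g_n$ identically and therefore $P^{(2)}=\omega_2$ and $P^{(3)}=\omega_3$.

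For $x=n^{-k}$ with $k\ge1$, the fractional parts $\{n^jx\}$ are
\[
\{n^jx\}=n^{j-k}\ \ (j<k),\qquad \{n^jx\}=0\ \ (j\ge k).
\]
The first of these lies in $[0,1/n]$. There $\eta_n$ is linear from $0$ to $1/n$, so $\eta_n(t)=t=g_n(t)$ on that range. All terms therefore agree, and both sums equal
\[
\sum_{j<k}n^{-j}n^{j-k}=kn^{-k}.
\]

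\textbf{Strictness for $n\ge4$.} Take $\rho=2\le n-2$. Then
\[
\eta_n(2/n)=\frac{2(n-2)}{n(n-1)}>\frac1n,
\]
since $2(n-2)>n-1$ for $n\ge4$. With $x=2/n$, all terms $j\ge1$ vanish, so
\[
P^{(n)}(2/n)>\omega_n(2/n).
\]
By continuity the inequality is strict on a neighbourhood of $2/n$ as well.

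The only delicate point is confirming that both functions are genuinely linear between consecutive nodes $\rho/n$. This is immediate from the definition of $g_n$ and from Theorem~\ref{Thm:digit-sum-Pn}(5) for $\eta_n$, so I expect no real obstacle.
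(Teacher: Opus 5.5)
Your proposal is correct and follows essentially the paper's proof. You compare the generators termwise, showing $\eta_n(t)\ge\min\{t,1-t,1/n\}$ via $\rho(n-\rho)\ge n-1$ and piecewise linearity, get equality for $n=2,3$ from agreement at all nodes, compute both series explicitly at $x=n^{-k}$, and show strictness at $x=2/n$ for $n\ge4$. The only cosmetic differences are that you check both generators at every node and supply the factorization $(\rho-1)(n-1-\rho)\ge0$, whereas the paper argues interval by interval, using that the truncated distance is constantly $1/n$ on the middle pieces.
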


The last theorem concerns the quasi-norm range $0<\alpha<1$.

\begin{theorem}[The sub-$L^1$ endpoint]\label{thm:alpha}
Let $n\ge2$ and $0<\alpha<1$.  Then for every measurable set
$A\subset[0,1)$,
\[
\|S_1(\mathbbm 1_A)\|_\alpha\ge |A|^*.
\]
Moreover, for each $\alpha\in(0,1)$, there exists a constant
$C_{\alpha,n}<\infty$ and a sequence of sets $A_j\in\mathcal N$ such that
$|A_j|\to0$ and
\[
\|S_1(\mathbbm 1_{A_j})\|_\alpha\le C_{\alpha,n}|A_j|.
\]
\end{theorem}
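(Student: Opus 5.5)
The plan has two parts: a lower bound obtained from Theorem~\ref{mthm:1}, and an explicit example showing sharpness.

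\emph{Lower bound.} Set $F(x)=x^*=\min\{x,1-x\}$. I will check that $F\in\mathcal B(\alpha,1,n)$ for every $0<\alpha<1$; Theorem~\ref{mthm:1} then gives $\|S_1(\mathbbm 1_A)\|_\alpha\ge|A|^*$ at once. Clearly $F$ is continuous, non-negative, and $F(0)=F(1)=0$. The inequality to prove is
\[
(\bar x^*)^\alpha\le\frac1n\sum_{i=1}^n\bigl(x_i^*+|x_i-\bar x|\bigr)^\alpha .
\]
Since $u\mapsto u^*$ is $1$-Lipschitz, $x_i^*+|x_i-\bar x|\ge \bar x^*$ for each $i$. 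Raising to the power $\alpha$ preserves this inequality, so every summand is at least $(\bar x^*)^\alpha$, and averaging finishes the proof. The argument is pointwise; it never uses concavity of $u\mapsto u^\alpha$.

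\emph{Sharpness.} Take $A_k=[0,n^{-k})$. Compute $S_1(\mathbbm 1_{A_k})$ pointwise.
\begin{itemize}
\item On $A_k$, at level $m\le k$ the density moves from $n^{-(m-1)}$ to $n^{-m}$. The jump is $(n-1)n^{-m}$, and these jumps sum to at most $1$.
\item On $[0,n^{-j})\setminus[0,n^{-j-1})$ with $j<k$, the density drops at level $j+1$ from $n^{-(k-j)}$ to $0$. The earlier jumps are again geometric. So $S_1\le C_n n^{-(k-j)}$ there; in fact $S_1\asymp_n n^{-(k-j)}$.
\end{itemize}
This set has measure $\asymp n^{-j}$. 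Hence
\[
\|S_1\|_\alpha^\alpha\lesssim_n n^{-k}+\sum_{j=0}^{k-1}n^{-j}n^{-\alpha(k-j)}
= n^{-k}+n^{-\alpha k}\sum_{j=0}^{k-1}n^{-j(1-\alpha)} .
\]
The geometric series converges because $\alpha<1$, and it is bounded by $(1-n^{-(1-\alpha)})^{-1}$. Therefore $\|S_1\|_\alpha^\alpha\lesssim_{\alpha,n}n^{-\alpha k}$, since $n^{-k}\le n^{-\alpha k}$. Taking $\alpha$-th roots gives $\|S_1(\mathbbm 1_{A_k})\|_\alpha\le C_{\alpha,n}n^{-k}=C_{\alpha,n}|A_k|$.

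\emph{Main obstacle.} The only step needing care is the bookkeeping of the martingale differences on each shell. I must make sure the pre-drop jumps on the shell at depth $j$ are bounded by a multiple of $n^{-(k-j)}$ rather than by a constant. They are: the density there is $n^{-(k-m)}$ for $m\le j$, so the increments are geometric with largest term of order $n^{-(k-j)}$. The case $\alpha\to1$ is excluded precisely because the geometric sum blows up, which is consistent with the logarithm in Theorem~\ref{mthm4}.
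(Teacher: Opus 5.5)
Your proof is correct and follows the paper's argument essentially step for step: the same admissible function $x^*=\dist(x,\{0,1\})$ fed into Theorem~\ref{mthm:1} via the pointwise triangle inequality, and the same test sets $[0,n^{-k})$ split into the shells $[n^{-j-1},n^{-j})$ (the paper's $R_{j+1}$). The only differences are that the paper computes $S_1=p(2n^{j}-1)$ on these shells exactly while you bound it by $2n^{-(k-j)}$, and one small slip: on $A_k$ the density at level $m$ is $n^{m-k}$, so it rises from $n^{m-1-k}$ to $n^{m-k}$ rather than moving from $n^{-(m-1)}$ to $n^{-m}$. The jump sizes are the same up to reindexing $m\mapsto k-m+1$, so $S_1=1-n^{-k}\le 1$ there and nothing else changes.
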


\subsection*{Open problem}
The natural remaining problem is to determine the sharp Bellman function
$B_{1,1,n}$ for $n\ge4$.  The ternary theorem shows that the correct
answer can be a non-classical Takagi-type function, while Remark
\ref{rmk:ngeq4} shows that the most direct one-residual generalization already
fails in base $4$.  It would be interesting to know whether the sharp
profiles for $n\ge4$ still admit a finite-state Takagi-type description, or
whether a genuinely new phenomenon appears.

\subsection*{Acknowledgements.}
The author would like to thank Professor Paata Ivanisvili for valuable
discussions, insightful suggestions, and guidance.

\section{Proof of Theorem \ref{mthm:1}}

In this section we prove Theorem \ref{mthm:1}. Thus we fix
$0<\alpha\le 1\le \beta$, an integer $n\ge2$, and a function
$F\in\mathcal B(\alpha,\beta,n)$. We shall prove directly that, for every
measurable set $A\subset[0,1)$,
\[
\|S_\beta(\mathbbm 1_A)\|_\alpha\ge F(|A|).
\]
We first present an extension of the $n$-point Bellman
inequality.

\begin{lemma}
\label{lem:common-tail}
Let $0<r\le1$. For every real $X_1,\dots,X_n\geq 0$ and every $s\ge0$,
\[
\left(\frac1n\sum_{i=1}^n (X_i+s)^r\right)^{1/r}
\ge
s+
\left(\frac1n\sum_{i=1}^n X_i^r\right)^{1/r}.
\]
\end{lemma}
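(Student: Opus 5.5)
This is the reverse Minkowski inequality for the $L^r$ quasi-norm with $0<r\le1$, applied to the vector $X$ and the constant vector $s$. I will prove it by concavity of the $r$-power mean. Let $\mu$ be the uniform probability measure on $\{1,\dots,n\}$ and
\[
\Phi(Y):=\left(\frac1n\sum_{i=1}^n Y_i^r\right)^{1/r},\qquad Y\in[0,\infty)^n .
\]
The claim is exactly $\Phi(X+s\mathbf 1)\ge \Phi(X)+\Phi(s\mathbf 1)$, since $\Phi(s\mathbf 1)=s$. So it suffices to show that $\Phi$ is superadditive on the nonnegative orthant.

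First, note that $\Phi$ is positively homogeneous of degree one. Second, I will show that $\Phi$ is concave on $[0,\infty)^n$. Superadditivity then follows from the standard argument. If $\Phi(X)=0$, then $X=0$ (the weights are positive) and the claim is trivial. Otherwise, set $a=\Phi(X)$ and $b=s$, assume $b>0$ (the case $s=0$ is trivial), and write
\[
X+s\mathbf 1=(a+b)\left(\tfrac{a}{a+b}\cdot\tfrac{X}{a}+\tfrac{b}{a+b}\cdot\mathbf 1\right).
\]
Homogeneity together with concavity gives $\Phi(X+s\mathbf 1)\ge (a+b)\bigl(\tfrac{a}{a+b}\Phi(X/a)+\tfrac{b}{a+b}\Phi(\mathbf 1)\bigr)=a+b$, because $\Phi(X/a)=\Phi(\mathbf 1)=1$.

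The main step is the concavity of $\Phi$ for $0<r\le1$. For $r=1$, $\Phi$ is linear and there is nothing to prove. For $0<r<1$, I will use that a positively homogeneous function is concave exactly when its upper level set $\{\Phi\ge1\}$ is convex. That set equals $\{Y\ge0:\frac1n\sum Y_i^r\ge1\}$, an upper level set of the concave function $Y\mapsto\frac1n\sum Y_i^r$ (each $t\mapsto t^r$ is concave on $[0,\infty)$), and is therefore convex.

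Alternatively, I can avoid the level-set argument and give a direct elementary proof by a Hölder-type computation. For $r<1$, write $(X_i+s)^r=(X_i+s)(X_i+s)^{r-1}$ and apply reverse Hölder. A cleaner route is the following. Set $p=1/r\ge1$ and $u_i=X_i^r$, $v=s^r$. The claim becomes
\[
\left(\frac1n\sum\bigl(u_i^{p}+v^{p}\bigr)^{1/p}\right)^{p}\ \ge\ \ldots
\]
which is messier, so I prefer the concavity argument. The only mildly delicate point is justifying that concavity of $\Phi$ follows from convexity of its superlevel set. Given $Y,Z$ with $\Phi(Y),\Phi(Z)>0$, normalize to $Y/\Phi(Y)$ and $Z/\Phi(Z)$, take the convex combination with weights proportional to $\Phi(Y)$ and $\Phi(Z)$, and use homogeneity. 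The degenerate cases where $\Phi$ vanishes are handled directly, since then the vector is $0$.
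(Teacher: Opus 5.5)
Your proof is correct, but it takes a different route from the paper. The paper fixes $X$ and shows that $s\mapsto\bigl(\frac1n\sum_i(X_i+s)^r\bigr)^{1/r}-s$ is nondecreasing. It differentiates for $s>0$ and reduces the sign of the derivative to $A(s)^{(1-r)/r}\cdot\frac1n\sum_i Y_i^{r-1}\ge1$. It proves this by H\"older with exponents $1/(1-r)$ and $1/r$, then passes to $s=0$ by continuity, since $Y_i^{r-1}$ may blow up there. Your argument is calculus-free and structural. Concavity of $t\mapsto t^r$ makes $\{\Phi\ge1\}$ convex, and with $1$-homogeneity this yields $\Phi(X+Y)\ge\Phi(X)+\Phi(Y)$ for \emph{every} $Y\in[0,\infty)^n$ (reverse Minkowski). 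The lemma is the special case $Y=s\mathbf 1$. So you prove strictly more, while the paper's computation is tailored to constant shifts, which is all the Bellman extension step needs. Two small remarks. First, the detour through full concavity of $\Phi$ is unnecessary. Your normalization $X+s\mathbf 1=(a+b)\bigl(\frac a{a+b}\frac Xa+\frac b{a+b}\mathbf 1\bigr)$ already exhibits a convex combination of two points of $\{\Phi\ge1\}$, which gives $\Phi(X+s\mathbf 1)\ge a+b$ directly. Second, the alternative you abandoned as messier is in fact a one-line proof. With $p=1/r$, $u_i=X_i^r$, $v=s^r$ and $\bar u=\frac1n\sum_i u_i$, one has $(X_i+s)^r=\|(u_i,v)\|_{\ell^p}$, and the right-hand side of the lemma equals $\|(\bar u,v)\|_{\ell^p}^p$. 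The lemma is therefore equivalent to $\|(\bar u,v)\|_{\ell^p}\le\frac1n\sum_i\|(u_i,v)\|_{\ell^p}$, which is just convexity of the $\ell^p$ norm on $\mathbb R^2$ for $p\ge1$.
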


\begin{proof}

For $r=1$, this is an equality. Assume $0<r<1$. For $s\ge0$, set
\[
\Phi(s)
:=
\left(\frac1n\sum_{i=1}^n (X_i+s)^r\right)^{1/r}-s.
\]
It is enough to show that $\Phi$ is nondecreasing.
For $s>0$, write $Y_i=X_i+s$ and
\[
A(s):=\frac1n\sum_{i=1}^n Y_i^r.
\]
Differentiating gives
\[
\Phi'(s)
=
A(s)^{1/r-1}
\left(\frac1n\sum_{i=1}^n Y_i^{r-1}\right)
-1.
\]
Thus we need to prove
\[
A(s)^{(1-r)/r}
\left(\frac1n\sum_{i=1}^n Y_i^{r-1}\right)
\ge1.
\]
By H\"older's inequality with conjugate exponents $1/(1-r)$ and $1/r$,
\[
1
=
\frac1n\sum_{i=1}^n
Y_i^{r(1-r)}Y_i^{r(r-1)}
\le
\left(\frac1n\sum_{i=1}^n Y_i^r\right)^{1-r}
\left(\frac1n\sum_{i=1}^n Y_i^{r-1}\right)^r.
\]
This is exactly the desired inequality. Hence $\Phi'(s)\ge0$ for
$s>0$. By continuity at $s=0$, $\Phi(s)\ge\Phi(0)$ for every
$s\ge0$, which proves the lemma.
\end{proof}

\begin{lemma}
\label{lem:Bellman_q_extension}
For
$0<\alpha\le1\le\beta$ and $q\ge0$, define
\[
Q_F(p,q)
:=
\left(F(p)^\beta+q^\beta\right)^{\alpha/\beta},
\qquad 0\le p\le1.
\]
Then, for all $q\ge0$ and all $x_1,\dots,x_n\in[0,1]$, one has
\begin{equation}
\label{eq:Q_npoint}
Q_F(\bar x,q)
\le
\frac1n\sum_{i=1}^n
Q_F\left(
x_i,
\left(q^\beta+|x_i-\bar x|^\beta\right)^{1/\beta}
\right),
\qquad
\bar x=\frac1n\sum_{i=1}^n x_i.
\end{equation}
\end{lemma}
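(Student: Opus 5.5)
The plan is to reduce \eqref{eq:Q_npoint} to the $n$-point inequality \eqref{eq:npt} by two convexity-type facts: a Minkowski inequality in $\ell^\beta$ and Lemma \ref{lem:common-tail} with $r=\alpha$.

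Set $a_i:=(F(x_i)^\beta+|x_i-\bar x|^\beta)^{1/\beta}$. The right-hand side of \eqref{eq:Q_npoint} is then
\[
\frac1n\sum_{i=1}^n\left(F(x_i)^\beta+|x_i-\bar x|^\beta+q^\beta\right)^{\alpha/\beta}
=\frac1n\sum_{i=1}^n\left(a_i^\beta+q^\beta\right)^{\alpha/\beta}.
\]
The left-hand side is $(F(\bar x)^\beta+q^\beta)^{\alpha/\beta}$. Raising to the power $1/\alpha$, it suffices to prove
\[
\left(F(\bar x)^\beta+q^\beta\right)^{1/\beta}
\le
\left(\frac1n\sum_{i=1}^n\left(a_i^\beta+q^\beta\right)^{\alpha/\beta}\right)^{1/\alpha}.
\]
The hypothesis \eqref{eq:npt} says exactly that $F(\bar x)\le M:=\bigl(\frac1n\sum_i a_i^\alpha\bigr)^{1/\alpha}$. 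Since $t\mapsto (t^\beta+q^\beta)^{1/\beta}$ is increasing, it suffices to show
\[
(M^\beta+q^\beta)^{1/\beta}\le \Bigl(\frac1n\sum_i (a_i^\beta+q^\beta)^{\alpha/\beta}\Bigr)^{1/\alpha}.
\]
This is a reverse Minkowski statement for the $\alpha$-quasinorm applied to the vectors $(a_i,q)\in\ell^\beta_2$.

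To prove it, consider the map $\Psi:\mathbb R_{\ge0}^n\to\mathbb R$, $\Psi(y)=\bigl(\frac1n\sum_i y_i^{\alpha/\beta}\bigr)^{\beta/\alpha}$. Since $\alpha/\beta\le 1$, this is a power mean of exponent $r=\alpha/\beta\in(0,1]$, which is concave and $1$-homogeneous, hence superadditive: $\Psi(y+z)\ge\Psi(y)+\Psi(z)$. Apply this with $y_i=a_i^\beta$ and the constant vector $z_i=q^\beta$. The constant case is exactly Lemma \ref{lem:common-tail}: with $X_i=a_i^\beta$, $s=q^\beta$, $r=\alpha/\beta$, it gives
\[
\Bigl(\frac1n\sum_i(a_i^\beta+q^\beta)^{\alpha/\beta}\Bigr)^{\beta/\alpha}\ge q^\beta+\Bigl(\frac1n\sum_i a_i^{\alpha}\Bigr)^{\beta/\alpha}=q^\beta+M^\beta.
\]
Taking the $1/\beta$ power yields the required inequality.

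The main point to check carefully is that Lemma \ref{lem:common-tail} applies with the exponent $r=\alpha/\beta$ rather than $\alpha$. It does, since $0<\alpha/\beta\le1$. The degenerate cases $q=0$ (which reduces to \eqref{eq:npt}) and some $a_i=0$ are covered by continuity of $\Psi$.
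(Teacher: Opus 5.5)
Your proof is correct and is essentially the paper's argument: you use the $n$-point inequality to get $F(\bar x)\le M$ (equivalently $F(\bar x)^\beta\le\bigl(\frac1n\sum_i X_i^{r}\bigr)^{1/r}$ with $X_i=a_i^\beta$, $r=\alpha/\beta$), and then apply Lemma \ref{lem:common-tail} with $s=q^\beta$, exactly as the paper does. One small inconsistency: your opening sentence says Lemma \ref{lem:common-tail} is applied with $r=\alpha$ and invokes a Minkowski inequality, whereas only the exponent $r=\alpha/\beta$ and monotonicity of $t\mapsto(t^\beta+q^\beta)^{1/\beta}$ are actually used, as your later text correctly states.
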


\begin{proof}
Set $r:=\frac{\alpha}{\beta},$ since $0<\alpha\le1\le\beta$, we have $0<r\le1$. For each $i$, put
\[
X_i:=F(x_i)^\beta+|x_i-\bar x|^\beta.
\]
Note that $X_i$ are simply non-negative real
numbers.
The defining $n$-point inequality for $F$ gives
\[
F(\bar x)^\alpha
\le
\frac1n\sum_{i=1}^n X_i^r.
\]
Equivalently,
\[
F(\bar x)^\beta
\le
\left(\frac1n\sum_{i=1}^n X_i^r\right)^{1/r}.
\]
Applying Lemma \ref{lem:common-tail} with $s=q^\beta$, we get
\[
\left(\frac1n\sum_{i=1}^n (X_i+q^\beta)^r\right)^{1/r}
\ge
q^\beta+
\left(\frac1n\sum_{i=1}^n X_i^r\right)^{1/r}
\ge
q^\beta+F(\bar x)^\beta.
\]
Raising both sides to the power $r$ yields
\[
\left(F(\bar x)^\beta+q^\beta\right)^r
\le
\frac1n\sum_{i=1}^n
\left(
F(x_i)^\beta+|x_i-\bar x|^\beta+q^\beta
\right)^r.
\]
This is precisely \eqref{eq:Q_npoint}.
\end{proof}

We now prove the theorem. Let $A\subset[0,1)$ be measurable and set
$
f=\mathbbm 1_A.
$
Let
\[
f_m=\mathbb E(f\mid\mathcal N_m),
\qquad
d_m=f_m-f_{m-1},
\]
be the martingale and its differences, moreover, since $f=\mathbbm 1_A$, each $f_m$ takes values in $[0,1]$. For $M\ge0$, write
\[
S_{\beta,M}(f)
=
\left(\sum_{m=1}^M |d_m|^\beta\right)^{1/\beta},
\]
with the convention $S_{\beta,0}(f)=0$. For $q\ge0$, define
\[
R_M^q(x)
:=
\left(
q^\beta+\sum_{m=1}^M |d_m(x)|^\beta
\right)^{1/\beta}.
\]
Thus $R_0^q\equiv q$, and $R_M^q$ is $\mathcal N_M$-measurable.
We claim that for every $M\ge0$ and every $q\ge0$,
\begin{equation}
\label{eq:finite-level-bellman}
Q_F(|A|,q)
\le
\int_0^1 Q_F(f_M(x),R_M^q(x))\,dx.
\end{equation}
For $M=0$, this is an equality, since $f_0\equiv |A|$ and
$R_0^q\equiv q$.\\

Assume that \eqref{eq:finite-level-bellman} is known at level $M$. We
show that the right-hand side can only increase when passing from $M$ to
$M+1$. Let $I\in\mathcal I_M$, and let $I_1,\dots,I_n$ be its
children. On $I$, both $f_M$ and $R_M^q$ are constant. Write
\[
\bar x:=\langle f\rangle_I,
\qquad
\rho:=R_M^q|_I,
\qquad
x_i:=\langle f\rangle_{I_i},
\quad i=1,\dots,n.
\]
Then
\[
\bar x=\frac1n\sum_{i=1}^n x_i.
\]
Moreover, on the child $I_i$,
\[
f_{M+1}=x_i,
\qquad
d_{M+1}=x_i-\bar x,
\]
and therefore
\[
R_{M+1}^q
=
\left(\rho^\beta+|x_i-\bar x|^\beta\right)^{1/\beta}
\qquad\text{on }I_i.
\]
Applying Lemma \ref{lem:Bellman_q_extension} gives
\[
Q_F(\bar x,\rho)
\le
\frac1n\sum_{i=1}^n
Q_F\left(
x_i,
\left(\rho^\beta+|x_i-\bar x|^\beta\right)^{1/\beta}
\right).
\]
Multiplying by $|I|$, this becomes
\[
\int_I Q_F(f_M,R_M^q)\,dx
\le
\int_I Q_F(f_{M+1},R_{M+1}^q)\,dx.
\]
Summing over all $I\in\mathcal I_M$, we obtain
\[
\int_0^1 Q_F(f_M,R_M^q)\,dx
\le
\int_0^1 Q_F(f_{M+1},R_{M+1}^q)\,dx.
\]
Since equality holds at level $0$, this proves
\eqref{eq:finite-level-bellman} for every $M$.
We now take $q=0$. Since $Q_F(|A|,0)=F(|A|)^\alpha$, the finite-level
inequality gives
\[
F(|A|)^\alpha
\le
\int_0^1
\left(
F(f_M(x))^\beta
+
S_{\beta,M}(f)(x)^\beta
\right)^{\alpha/\beta}
\,dx.
\]
Again put $r=\alpha/\beta$. Since $0<r\le1$, we have 
$
(a+b)^r\le a^r+b^r$ for $a,b\ge0.
$
Hence
\[
F(|A|)^\alpha
\le
\int_0^1 F(f_M(x))^\alpha\,dx
+
\int_0^1 S_{\beta,M}(f)(x)^\alpha\,dx.
\]

We pass to the limit $M\to\infty$. 
The $n$-adic filtration generates the Borel sigma algebra modulo null
sets; hence the martingale convergence theorem, equivalently the
Lebesgue differentiation theorem for $n$-adic intervals, gives
$f_M\to f$ a.e., that is
\[
f_M\to f=\mathbbm 1_A
\qquad\text{a.e.}
\]
Since $F$ is continuous on $[0,1]$ and $F(0)=F(1)=0$, it follows that
\[
F(f_M)\to F(\mathbbm 1_A)=0
\qquad\text{a.e.}
\]
Also $F$ is bounded on $[0,1]$, so dominated convergence gives
\[
\int_0^1 F(f_M)^\alpha\,dx\to0.
\]
On the other hand,
\[
S_{\beta,M}(f)^\beta
=
\sum_{m=1}^M |d_m|^\beta
\uparrow
\sum_{m=1}^\infty |d_m|^\beta
=
S_\beta(f)^\beta,
\]
and therefore
\[
S_{\beta,M}(f)^\alpha
\uparrow
S_\beta(f)^\alpha.
\]
By monotone convergence,
\[
\int_0^1 S_{\beta,M}(f)^\alpha\,dx
\to
\int_0^1 S_\beta(f)^\alpha\,dx,
\]
possibly with value $+\infty$. Consequently,
\[
F(|A|)^\alpha
\le
\int_0^1 S_\beta(\mathbbm 1_A)^\alpha\,dx.
\]
Taking the power $1/\alpha$, with the usual convention if the right-hand
side is infinite, gives
\[
F(|A|)
\le
\left(
\int_0^1 S_\beta(\mathbbm 1_A)^\alpha\,dx
\right)^{1/\alpha}
=
\|S_\beta(\mathbbm 1_A)\|_\alpha.
\]
This proves Theorem \ref{mthm:1}.

\section{Proof of Theorem \ref{mthm:2}}
\label{sec:proof-mthm2}

Throughout this section we fix $n=3$ and $\alpha=\beta=1$. In this case
Definition \ref{npi} says that an admissible Bellman function is a
non-negative continuous function $F:[0,1]\to[0,\infty)$, with
$F(0)=F(1)=0$, satisfying
\begin{equation}\label{eq:ternary-bellman-ineq}
F(\bar x)
\le
\frac13\sum_{i=1}^{3}F(x_i)
+
\frac13\sum_{i=1}^{3}|x_i-\bar x|,
\qquad
\bar x=\frac{x_1+x_2+x_3}{3}.
\end{equation}

For $0\le x\le1$, define
\[
V_3(x)
:=
\inf_{\substack{A\subset[0,1)\ {\rm measurable}\\ |A|=x}}
\|S_1(\mathbbm 1_A)\|_1.
\]
We shall prove that
$$
V_3(x)=T_3(x),
$$
with $T_3$ defined by
\begin{equation}\label{eq:T3-def-proof}
T_3(x)
:=
\sum_{j=0}^{\infty}
3^{-j}\psi_3(\{3^jx\}),
\qquad 0\le x\le1,
\end{equation}
where $\{t\}=t-\lfloor t\rfloor$ denotes the fractional part of $t$, and
\begin{equation}\label{eq:psi3-piecewise-proof}
\psi_3(t)
=
\begin{cases}
\dfrac{4t}{3}, & 0\le t\le \dfrac13,\\[1ex]
\dfrac{2(1-t)}{3}, & \dfrac13\le t\le \dfrac12,\\[1ex]
\dfrac{2t}{3}, & \dfrac12\le t\le \dfrac23,\\[1ex]
\dfrac{4(1-t)}{3}, & \dfrac23\le t\le1.
\end{cases}
\end{equation}
Equivalently,
\[
\psi_3(t)
=
\min\left\{
\frac{1+2\left|t-\frac12\right|}{3},
\,
\frac{2-4\left|t-\frac12\right|}{3}
\right\},
\qquad 0\le t\le1.
\]
Since $\psi_3$ is bounded, the series in \eqref{eq:T3-def-proof}
converges uniformly. Hence $T_3$ is continuous. Also
$\psi_3(0)=\psi_3(1)=0$, so
\[
T_3(0)=T_3(1)=0.
\]

The proof has three parts. First, Lemma
\ref{lem:ternary-extremizing-construction} gives the upper bound
$V_3\le T_3$. Second, Lemmas \ref{lem:local-debt} and
\ref{lem:finite-ternary-compression} prove the finite compression identity
which implies the Bellman admissibility of $T_3$, i.e. $T_3\in \mathcal{B}(1,1,3)$. Finally, Proposition
\ref{prop:T3-admissible} and Theorem \ref{mthm:1} give the lower bound
$V_3\ge T_3$.

\begin{lemma}
\label{lem:one-step-ternary}
For every $x\in[0,1]$, the following one-residual splitting has local
mean-deviation cost $\psi_3(x)$.
\begin{equation}\label{eq:T3-recursion}
T_3(x)
=
\psi_3(x)
+
\frac13T_3(\{3x\}).
\end{equation}
\end{lemma}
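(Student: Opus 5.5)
The recursion \eqref{eq:T3-recursion} is immediate from the series definition \eqref{eq:T3-def-proof}. Isolating the $j=0$ term and using $\{3^{j}\{3x\}\}=\{3^{j+1}x\}$ for $j\ge0$ gives
\[
T_3(x)=\psi_3(\{x\})+\sum_{j\ge1}3^{-j}\psi_3(\{3^jx\})
=\psi_3(x)+\frac13\sum_{j\ge0}3^{-j}\psi_3(\{3^j\{3x\}\}).
\]
For $0\le x<1$ we have $\{x\}=x$. At $x=1$ both sides vanish, since $\psi_3(1)=0$, $\{3\}=0$, and $T_3(0)=T_3(1)=0$. The only other point needing care is the convention $\{1\}=0$, and the equality $\psi_3(0)=\psi_3(1)=0$ makes it harmless.

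The substantive part is to exhibit the one-residual splitting and check that its mean-deviation cost equals $\psi_3(x)$. Write $3x=k+t$ with $k=\lfloor 3x\rfloor\in\{0,1,2\}$ and $t=\{3x\}$ (with $k=2,t=1$ at $x=1$). The splitting assigns $k$ children density $1$, $2-k$ children density $0$, and one residual child density $t$. The mean is then $(k+t)/3=x$.

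The cost $\frac13\sum_i|x_i-x|$ is computed case by case.

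\emph{Case $k=0$} ($0\le x<\tfrac13$, $t=3x$). The cost is
\[
\tfrac13\bigl(2x+(3x-x)\bigr)=\tfrac43x .
\]

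\emph{Case $k=1$} ($\tfrac13\le x<\tfrac23$, $t=3x-1$). The cost is
\[
\tfrac13\bigl((1-x)+x+|3x-1-x|\bigr)=\tfrac13\bigl(1+|2x-1|\bigr)=\tfrac{1+2|x-\frac12|}{3}.
\]

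\emph{Case $k=2$.} This is symmetric to $k=0$, and the cost is $\tfrac43(1-x)$.

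These values must be compared with \eqref{eq:psi3-piecewise-proof}. On $[0,\tfrac13]$ the formula gives $\tfrac{4x}{3}$. On $[\tfrac13,\tfrac12]$ it gives $\tfrac{2(1-x)}{3}=\tfrac{1+(1-2x)}{3}$, and on $[\tfrac12,\tfrac23]$ it gives $\tfrac{2x}{3}=\tfrac{1+(2x-1)}{3}$. On $[\tfrac23,1]$ it gives $\tfrac{4(1-x)}{3}$. All four pieces agree with the case computation.

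The only point needing care is that the piecewise formula \eqref{eq:psi3-piecewise-proof} agrees with the min-formula. This is a routine check at the breakpoints $\tfrac13,\tfrac12,\tfrac23$, where the two expressions cross; for instance, both equal $\tfrac49$ at $x=\tfrac13$.

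The splitting interpretation reads the recursion as follows. Children of density $0$ or $1$ contribute $T_3(0)=T_3(1)=0$ to the future cost. The residual child carries weight $\tfrac13$ and future cost $T_3(t)$. The total cost is therefore $\psi_3(x)+\tfrac13T_3(\{3x\})$, which is \eqref{eq:T3-recursion}.
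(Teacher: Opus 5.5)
Your proof is correct and follows essentially the same route as the paper. You compute the mean-deviation cost of the same one-residual splittings case by case and match them to the four pieces of $\psi_3$, and you derive the recursion by isolating the $j=0$ term of the series and reindexing via $\{3^j\{3x\}\}=\{3^{j+1}x\}$. Your explicit treatment of the convention $\{1\}=0$ at $x=1$ fills a small point the paper leaves implicit. One slip: the two branches of the min-formula cross only at $\tfrac13$ and $\tfrac23$, while $\tfrac12$ is merely where $|t-\tfrac12|$ changes branch.
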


\begin{proof}

We will prove the result by breaking it into the following three cases. In particular we break the case $1/3\le x\le2/3$ with split
$x=\frac13(0+(3x-1)+1)$ into two corresponding costs 
$$2(1-x)/3\quad \text{for} \quad x\le1/2\quad \text {and}\quad 2x/3\quad \text{for}\quad x\ge1/2.
$$

There are three possible one-residual splittings. The middle splitting has two
affine subcases, according as $x\le1/2$ or $x\ge1/2$. This gives the following verifications.
\begin{enumerate}
\item 
If $0\le x\le1/3$, use the split
$x=\frac13(0+0+3x).$
The local cost is
\[
\frac13\bigl(|0-x|+|0-x|+|3x-x|\bigr)
=
\frac{4x}{3}.
\]
\item
If $1/3\le x\le2/3$, use the split
$x=\frac13(0+(3x-1)+1).
$
The local cost is
\[
\frac13
\bigl(
|0-x|+|3x-1-x|+|1-x|
\bigr).
\]
\item 
For $1/3\le x\le1/2$, this equals
\[
\frac13\bigl(x+(1-2x)+(1-x)\bigr)
=
\frac{2(1-x)}3.
\]
\item
For $ 
1/2\le x\le2/3$, this equals
\[
\frac13\bigl(x+(2x-1)+(1-x)\bigr)
=
\frac{2x}{3}.
\]
\item
If $2/3\le x\le1$, use the split
$
x=\frac13((3x-2)+1+1).
$
The local cost is
\[
\frac13
\bigl(
|3x-2-x|+|1-x|+|1-x|
\bigr)
=
\frac{4(1-x)}3.
\]
\end{enumerate}
Combined, these are exactly the four pieces of \eqref{eq:psi3-piecewise-proof}.
By definition, \eqref{eq:T3-recursion} follows directly
\[
T_3(x)
=
\psi_3(x)
+
\sum_{j=1}^{\infty}3^{-j}\psi_3(\{3^jx\})
=
\psi_3(x)
+
\frac13
\sum_{\ell=0}^{\infty}
3^{-\ell}\psi_3(\{3^\ell\{3x\}\})
=
\psi_3(x)+\frac13T_3(\{3x\}).
\]
\end{proof}

For $m\ge0$, define the "finite cut" version of $T_3$ by 
\[
T_{3,m}(x)
:=
\sum_{j=0}^{m-1}
3^{-j}\psi_3(\{3^jx\}),
\]
with the convention $T_{3,0}\equiv0$. Also recall
\[
N_m^{(3)}
:=
\left\{
\frac{k}{3^m}:0\le k\le3^m
\right\}.
\]
For $0\le k\le3^m$, define the scaled finite value
\begin{equation}\label{eq:scaled-Tm}
\widetilde T_m(k)
:=
3^mT_{3,m}\left(\frac{k}{3^m}\right).
\end{equation}

We shall prove the finite compression step by induction.
The only ``non-manual part'' of the proof is a finite exact verification of a
local inequality. We isolate that verification in Lemma
\ref{lem:local-debt} and prove it in Appendix
\ref{app:local-debt-verification}.\\

Define the scaled one-step profile
\[
\Lambda(u):=3\psi_3(u/3),
\qquad 0\le u\le3.
\]
Thus
\begin{equation}\label{eq:Lambda-piecewise}
\Lambda(u)
=
\begin{cases}
\dfrac{4u}{3}, & 0\le u\le1,\\[1ex]
2-\dfrac{2u}{3}, & 1\le u\le\dfrac32,\\[1ex]
\dfrac{2u}{3}, & \dfrac32\le u\le2,\\[1ex]
\dfrac{4(3-u)}{3}, & 2\le u\le3.
\end{cases}
\end{equation}

For $y=(y_1,y_2,y_3)\in\mathbb R^3$, write
\[
\mad(y)
:=
\sum_{i=1}^3
\left|
y_i-\frac{y_1+y_2+y_3}{3}
\right|.
\]

Let $a=(a_1,a_2,a_3)\in\{0,1,2\}^3$ and
$r=(r_1,r_2,r_3)\in[0,1]^3$. Put
\[
A:=a_1+a_2+a_3,
\qquad
R:=r_1+r_2+r_3,
\qquad
K:=A+R.
\]
Let
\[
q:=\min\{\lfloor K/3\rfloor,2\},
\qquad
s:=K-3q.
\]
So $0\le s\le3$. At the endpoints $K=3,6,9$, the convention is
immaterial, since $\Lambda(0)=\Lambda(3)=0$.\\

Define the local remainder
\begin{equation}\label{eq:local-remainder}
\mathcal L_a(r)
:=
\Lambda(R)
+
\sum_{i=1}^3\Lambda(a_i+r_i)
+
\mad(a+r)
-
\mad(r)
-
\Lambda(s)
-
3\Lambda(K/3).
\end{equation}
This is the exact error term that appears when one compares the compression
defect at two consecutive ternary scales.

The local remainder $\mathcal L_a$ is not always non-negative. To close
the induction we introduce a finite ``debt'' function:
\begin{equation}\label{eq:Gamma-def}
\Gamma(r)
:=
\max\Bigl\{
0,\,
-\mathcal L_b(r): b\in\{0,1,2\}^3
\Bigr\},
\qquad r\in[0,1]^3.
\end{equation}
The role of $\Gamma$ is to store the largest possible negative local
remainder at the next scale. The following lemma says that this debt is
propagated correctly through one ternary step.

\begin{lemma}
\label{lem:local-debt}
For every $a\in\{0,1,2\}^3$ and every $r\in[0,1]^3$,
\begin{equation}\label{eq:local-debt}
\Gamma(r)+\mathcal L_a(r)
\ge
3\Gamma\left(\frac{a+r}{3}\right).
\end{equation}
Moreover,
\[
\Gamma(r)=0
\qquad\text{for every }r\in\{0,1\}^3.
\]
\end{lemma}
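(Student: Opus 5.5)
The plan is to turn \eqref{eq:local-debt} into a finite, exact verification over a polyhedral subdivision of $[0,1]^3$, as the paper indicates with its reference to Appendix~\ref{app:local-debt-verification}.

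\emph{Piecewise linear structure.} All ingredients are piecewise linear in $r$.
\begin{enumerate}
\item $\Lambda$ is piecewise linear with breakpoints $0,1,\tfrac32,2,3$.
\item $\mad$ is a sum of absolute values of linear forms.
\item For each $a$, the quantity $s=K-3q$ is piecewise linear, with breaks where $K\in 3\mathbb Z$.
\end{enumerate}
Hence each $\mathcal L_b(r)$ is continuous and piecewise affine on $[0,1]^3$. The relevant hyperplanes are:
\begin{itemize}
\item $R\in\{1,\tfrac32,2\}$;
\item $b_i+r_i\in\{1,\tfrac32,2\}$, i.e.\ $r_i\in\{0,\tfrac12,1\}$;
\item $K/3$ and $s$ crossing their $\Lambda$-breakpoints, i.e.\ $R+B\in\tfrac12\mathbb Z$ in suitable ranges;
\item the equalities $r_i+b_i-(R+B)/3=0$ and $r_i=R/3$ coming from the $\mad$ terms.
\end{itemize}
Consequently $\Gamma$, the maximum of $0$ and the $27$ functions $-\mathcal L_b$, is continuous and piecewise affine. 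The right-hand side $3\Gamma((a+r)/3)$ is piecewise affine as well, with additional breaks pulled back through the affine map $r\mapsto(a+r)/3$.

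\emph{Reduction to a finite check.} By the symmetry of all expressions under simultaneous permutation of coordinates of $a$ and $r$, we may assume $a_1\le a_2\le a_3$. This leaves $10$ multisets $a$; we also order the $r_i$ within blocks of equal $a_i$. For fixed $a$, the inequality
\[
\Gamma(r)+\mathcal L_a(r)-3\Gamma\bigl((a+r)/3\bigr)\ge 0
\]
should be rewritten without a maximum on the left-hand side. Note that $\Gamma(r)\ge -\mathcal L_a(r)$ by definition, so the left-hand side is automatically $\ge0$ wherever the right-hand side vanishes. It therefore suffices to prove, for each $b\in\{0,1,2\}^3$,
\[
\Gamma(r)+\mathcal L_a(r)+3\mathcal L_b\bigl((a+r)/3\bigr)\ge0 .
\]
Since $\Gamma\ge\max(0,-\mathcal L_c)$ for every $c$, it is enough to exhibit, on each cell, some $c$ (or $c=$ ``zero'') with
\[
\max\bigl(0,-\mathcal L_c(r)\bigr)+\mathcal L_a(r)+3\mathcal L_b\bigl((a+r)/3\bigr)\ge0 .
\]
On each cell of the common refinement of all breakpoint hyperplanes, every term is affine. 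An affine inequality on a polytope holds if and only if it holds at the vertices. So the whole statement reduces to evaluating finitely many rational numbers at vertices of the arrangement, where all coordinates lie in a small lattice such as $\tfrac1{12}\mathbb Z$ or $\tfrac1{36}\mathbb Z$. This is done in exact rational arithmetic, as in the appendix.

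\emph{The second claim.} That $\Gamma(r)=0$ on $\{0,1\}^3$ is a direct computation. For $r\in\{0,1\}^3$, all arguments are integers, so $\Lambda(R)$, $\Lambda(a_i+r_i)$ and $\Lambda(s)$ take values in $\{0,\tfrac43\}$. Then one checks
\[
\mathcal L_b(r)\ge0 \quad\text{for all } b .
\]
By the $S_3$ symmetry, only $r\in\{(0,0,0),(0,0,1),(0,1,1),(1,1,1)\}$ need checking, against $27$ choices of $b$. As a sanity check, for $r=0$ we get $\mathcal L_b(0)=\sum_i\Lambda(b_i)+\mad(b)-\Lambda(s)-3\Lambda(B/3)$. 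This is nonnegative precisely because $\Lambda(B/3)=3\psi_3(B/9)$ is the optimal one-residual cost, a consequence of Lemma~\ref{lem:one-step-ternary}.

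\emph{Main obstacle.} The main obstacle is the size and correctness of the cell decomposition. The maximum defining $\Gamma$ appears on both sides, and the composition $\Gamma((a+r)/3)$ introduces extra hyperplanes. The witness $c$ must be chosen per cell, and I expect the naive choice $c=a$ to fail near $R\approx\tfrac32$, where the middle branch of $\Lambda$ switches.

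I would first try to prove the stronger, $\Gamma$-free inequality
\[
\mathcal L_a(r)+3\mathcal L_b\bigl((a+r)/3\bigr)\ge \min(0,\mathcal L_a(r))
\]
vertex by vertex. I would fall back to the full maximum over $c$ only on the cells where this fails, and delegate the exhaustive vertex enumeration to the appendix computation.
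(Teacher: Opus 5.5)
Your proposal is correct and is essentially the paper's argument: you use the same reduction, via $\Gamma\ge-\mathcal L_a$, to the $729$ piecewise-affine inequalities $\Gamma(r)+\mathcal L_a(r)+3\mathcal L_b\bigl((a+r)/3\bigr)\ge0$, certify them by an exact finite computation, and evaluate directly on $\{0,1\}^3$. The only real difference is the certification method: you use vertex enumeration on a polyhedral subdivision, while the appendix uses quantifier elimination via \texttt{Resolve}. Vertex checks need cells refined also along where the maximum defining $\Gamma$ switches branches, or a per-cell witness or exact LP. Your ``fall back to the full maximum'' step on a cell cannot be done by vertex evaluation alone.

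One correction: your sanity check that $\mathcal L_b(0)\ge0$ does not follow from Lemma~\ref{lem:one-step-ternary}. That lemma only computes the cost of the one-residual split and says nothing about optimality over all $b$; that optimality is what the present lemma is later used to prove. So $\mathcal L_b(0)\ge0$ must simply be one of the direct finite evaluations.
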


\begin{proof}
This is the finite exact verification carried out in Appendix
\ref{app:local-debt-verification}. We explain here why the appendix proves
precisely the desired statement.

Set
\[
Y_a(r):=\Gamma(r)+\mathcal L_a(r).
\]
Since $\Gamma(r)\ge-\mathcal L_a(r)$, one has
\[
Y_a(r)\ge0.
\]
Therefore \eqref{eq:local-debt} is equivalent to the family of inequalities
\[
Y_a(r)+3\mathcal L_b\left(\frac{a+r}{3}\right)\ge0,
\qquad b\in\{0,1,2\}^3.
\]
Indeed,
\[
3\Gamma\left(\frac{a+r}{3}\right)
=
3\max\left\{
0,\,
-\mathcal L_b\left(\frac{a+r}{3}\right):
b\in\{0,1,2\}^3
\right\}.
\]
Thus it is enough to verify
\begin{equation}\label{eq:729-main}
\Gamma(r)+\mathcal L_a(r)
+
3\mathcal L_b\left(\frac{a+r}{3}\right)\ge0
\end{equation}
for every pair
$a,b\in\{0,1,2\}^3$
and every $r\in[0,1]^3$. There are $27$ choices for $a$ and
$27$ choices for $b$, hence $27\cdot27=729$ inequalities.
The appendix gives a finite computer-assisted exact verification of all
$729$ inequalities, together with the eight endpoint identities
\[
\Gamma(r)=0,\qquad r\in\{0,1\}^3.
\]
This should not be understood as numerical evidence or as a floating-point
test. After the definitions of $\Lambda$, $\mad$, $\mathcal L_a$, and
$\Gamma$ are expanded, each assertion is a first-order statement over the
ordered field of real numbers with rational coefficients. Equivalently, the
only possible obstruction would be a real point $r\in[0,1]^3$ satisfying
one of finitely many rational piecewise-affine inequalities in the wrong
direction. The Wolfram Language code in Appendix
\ref{app:local-debt-verification} uses exact quantifier elimination, via
\texttt{Resolve[..., Reals]}, to prove that no such real counterexample
exists. Thus the computation is an exact finite verification of the stated
inequalities on the whole cube $[0,1]^3$, not a sampling or numerical
approximation.
\end{proof}

We now illustrate how one of the 729 inequalities is verified below.

\begin{example}[One verification]
\label{ex:one-local-cell}
We illustrate the nature of the finite check. Consider
\[
a=b=(0,0,1).
\]
Work on the affine cell
\[
0\le r_1\le r_2\le r_3\le1,
\qquad
R:=r_1+r_2+r_3\le\frac12,
\qquad
2r_2\le r_1+r_3.
\]
On this cell all absolute values and all pieces of $\Lambda$ are fixed.
Indeed, $R\le1/2$, $1+r_3\le3/2$, and $1+R\le3/2$. Also
$2r_2\le r_1+r_3$ is exactly the condition that $r_2\le R/3$, so the
branch of $\mad(r)$ is fixed.\\

For $a=(0,0,1)$, a direct substitution into
\eqref{eq:local-remainder} gives
\[
\mathcal L_{(0,0,1)}(r)=2(r_1+r_2).
\]
Now put
\[
z:=\frac{a+r}{3}
=
\left(\frac{r_1}{3},\frac{r_2}{3},\frac{1+r_3}{3}\right).
\]
The same affine branch calculation gives
\[
\mathcal L_{(0,0,1)}(z)
=
\frac23(r_1+r_2).
\]
Thus, on this cell, the corresponding member of the family
\eqref{eq:729-main} becomes
\[
\Gamma(r)
+
\mathcal L_{(0,0,1)}(r)
+
3\mathcal L_{(0,0,1)}\left(\frac{(0,0,1)+r}{3}\right)
=
\Gamma(r)+4(r_1+r_2)\ge0.
\]
This is one typical verification. The other cells are the
same kind of rational linear calculation.
Appendix
\ref{app:local-debt-verification} performs the finite verification exactly, by quantifier elimination over the real closed field of rational coefficients.
\end{example}

\begin{lemma}
\label{lem:finite-ternary-compression}
Let $m\ge0$ and $L=3^m$. For every integer $k\in\{0,\dots,3L\}$, one has 
\begin{equation}\label{eq:finite-compression}
\widetilde T_{m+1}(k)
=
\min_{\substack{k_1,k_2,k_3\in\{0,\dots,L\}\\
k_1+k_2+k_3=k}}
\left[
\sum_{i=1}^{3}\widetilde T_m(k_i)
+
\sum_{i=1}^{3}\left|k_i-\frac{k}{3}\right|
\right].
\end{equation}
Moreover, one minimizer (up to permutation) is the one-residual triple
\[
(k_1,k_2,k_3)
=
\begin{cases}
(0,0,k), & 0\le k\le L,\\[1ex]
(0,k-L,L), & L\le k\le 2L,\\[1ex]
(k-2L,L,L), & 2L\le k\le 3L.
\end{cases}
\]
\end{lemma}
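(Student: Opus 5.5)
The proof has two halves: the one-residual triple attains the value $\widetilde T_{m+1}(k)$, and no admissible triple does better. The second half is an induction on $m$ that carries the debt function $\Gamma$ of Lemma \ref{lem:local-debt}.

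\emph{Upper bound (attainment).} Put $x=k/(3L)$. For the one-residual triple listed in the statement, two of the $k_i$ lie in $\{0,L\}$. Since $T_{3,m}(0)=T_{3,m}(1)=0$, these contribute nothing. The residual entry is $k'=L\{3x\}$, with the convention $k'=L$ at the endpoints, so it contributes $\widetilde T_m(k')=3^mT_{3,m}(\{3x\})$. The mean-deviation term $\sum_i|k_i-k/3|$ is $3L$ times the local cost computed in Lemma \ref{lem:one-step-ternary}, that is $3^{m+1}\psi_3(x)$. The truncated form of \eqref{eq:T3-recursion},
\[
T_{3,m+1}(x)=\psi_3(x)+\tfrac13T_{3,m}(\{3x\}),
\]
is proved exactly as in Lemma \ref{lem:one-step-ternary}. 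Multiplying it by $3^{m+1}$ shows that the bracket in \eqref{eq:finite-compression} equals $\widetilde T_{m+1}(k)$ at this triple. Hence the minimum is at most $\widetilde T_{m+1}(k)$.

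\emph{Lower bound.} For $y\in(3^{-m}\mathbb Z\cap[0,1])^3$ I define the normalized defect
\[
E_m(y):=3^{-m}\Bigl[\sum_i\widetilde T_m(3^my_i)+\mad(3^my)-\widetilde T_{m+1}(3^m(y_1+y_2+y_3))\Bigr].
\]
The goal is $E_m\ge0$. I would prove the stronger statement $E_m(y)\ge\Gamma(y)$ by induction on $m$.

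For the base case $m=0$ we have $y\in\{0,1\}^3$. Then $E_0(y)=\mad(y)-\Lambda(y_1+y_2+y_3)$, which is checked directly: for $K=y_1+y_2+y_3$ one gets $0,\ 4/3-4/3,\ 4/3-4/3,\ 0$ for $K=0,1,2,3$. Since $\Gamma=0$ on $\{0,1\}^3$ by Lemma \ref{lem:local-debt}, the base case holds.

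For the inductive step, write $y=(a+r)/3$ with $a\in\{0,1,2\}^3$ and $r\in(3^{-(m-1)}\mathbb Z\cap[0,1])^3$, choosing $a_i=2,\ r_i=1$ when $y_i=1$. Three expansions are needed.
\begin{itemize}
\item Each $T_{3,m}(y_i)$ expands as $\tfrac13\Lambda(a_i+r_i)+\tfrac13T_{3,m-1}(r_i)$.
\item The mean deviation scales as $\mad(3^my)=3^{m-1}\mad(a+r)$.
\item For the parent term, put $K=A+R$. Then $3\bar y=K/3$ has leading digit $q=\min\{\lfloor K/3\rfloor,2\}$, with fractional part $s/3$, where $s=K-3q$. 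Applying the recursion twice gives
\[
T_{3,m+1}(\bar y)=\psi_3(\bar y)+\tfrac13\psi_3(s/3)+\tfrac19T_{3,m-1}(R/3).
\]
\end{itemize}
The tail $T_{3,m-1}(R/3)$ is not of the form appearing in $E_{m-1}(r)$. So I add and subtract the terms of $E_{m-1}(r)$, using $\widetilde T_m(R\cdot3^{m-1})$ and $\mad(r)$.

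After these expansions, the target identity is
\[
E_m\Bigl(\frac{a+r}{3}\Bigr)=\tfrac13\bigl[\mathcal L_a(r)+E_{m-1}(r)\bigr].
\]
This identity is exactly why $\mathcal L_a$ in \eqref{eq:local-remainder} contains the terms $\Lambda(R)$, $-\mad(r)$, $-\Lambda(s)$ and $-3\Lambda(K/3)$. Granting it, the induction hypothesis and Lemma \ref{lem:local-debt} give
\[
E_m(y)\ge\tfrac13\bigl(\mathcal L_a(r)+\Gamma(r)\bigr)\ge\Gamma\Bigl(\frac{a+r}{3}\Bigr)=\Gamma(y)\ge0 .
\]

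\emph{Main obstacle.} Lemma \ref{lem:local-debt} is taken as given (Appendix), so the hard part is the bookkeeping behind the displayed identity for $E_m$. Three points need care: matching the normalizations of $\Lambda$ against $\psi_3$ at each scale; handling the endpoint conventions ($r_i=1$, and $K\in\{3,6,9\}$, where $q$ is capped at $2$); and confirming that the scaled parent term really produces $\Lambda(s)+3\Lambda(K/3)$ with the correct weights. I would first verify the identity on a few explicit small cases, for instance $m=1$, before writing the general computation.
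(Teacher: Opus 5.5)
Your plan is the paper's proof written in normalized variables. Your $E_m(y)$ is the paper's defect $\Delta_m(k_1,k_2,k_3)$ divided by $L$, and your target identity $E_m\bigl(\frac{a+r}{3}\bigr)=\frac13\bigl[\mathcal L_a(r)+E_{m-1}(r)\bigr]$ is exactly \eqref{eq:defect-recursion} divided by $L=3H$. The base case, the closing appeal to Lemma~\ref{lem:local-debt}, and the attainment argument also match the paper. That identity is true, but you only grant it, and the one expansion you write down toward it is wrong in a way that would stop it from closing. The second application of the recursion sends $s/3$ to $\{3\cdot s/3\}=\{s\}$, so the correct parent expansion is
\[
T_{3,m+1}(\bar y)=\psi_3(\bar y)+\tfrac13\psi_3(s/3)+\tfrac19T_{3,m-1}(\{s\}),
\]
not one with tail $\tfrac19T_{3,m-1}(R/3)$. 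With your tail, cancelling against the term $-3T_{3,m}(R/3)$ in $E_{m-1}(r)$ would require $T_{3,m-1}(R/3)=T_{3,m-1}(\{R\})$. This fails on the grid as soon as $m\ge2$: for $m=2$ and $r=(\tfrac13,0,0)$ one gets $\psi_3(\tfrac19)=\tfrac4{27}\ne\tfrac49=\psi_3(\tfrac13)$. The sanity check at $m=1$ that you propose would not catch this, because $T_{3,0}\equiv0$ kills both tails.

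What is missing is the observation on which the paper's computation turns: $s-R=A-3q\in\mathbb Z$, so $\{s\}=\{R\}$. One more application of the recursion at $R/3$, together with $\psi_3(R/3)=\frac13\Lambda(R)$, then gives $\frac19T_{3,m-1}(\{s\})=\frac13T_{3,m}(R/3)-\frac19\Lambda(R)$. Hence
\[
3T_{3,m+1}(\bar y)=\Lambda(K/3)+\tfrac13\bigl(\Lambda(s)-\Lambda(R)\bigr)+T_{3,m}(R/3).
\]
This is the normalized form of the paper's step $\widetilde T_m(HR)-\widetilde T_m(Hs)=H\bigl(\Lambda(R)-\Lambda(s)\bigr)$. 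When $\{R\}=0$ both tails vanish, which takes care of the capped $q$ and of $r_i=1$. Now substitute this formula and your first two bullets into $E_m(y)=\sum_iT_{3,m}(y_i)+\mad(y)-3T_{3,m+1}(\bar y)$. Subtracting $\frac13E_{m-1}(r)=\frac13\sum_iT_{3,m-1}(r_i)+\frac13\mad(r)-T_{3,m}(R/3)$ leaves exactly $\frac13\mathcal L_a(r)$. With that, the rest of your argument goes through as written.
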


\begin{proof}
For $m\ge1$, we first record the recursion satisfied by the finite sums.
Let $H=3^{m-1}$, so that $3H=3^m$. If $a\in\{0,1,2\}$ and
$0\le r\le H$, then
\begin{equation}\label{eq:T-recursion-scaled}
\widetilde T_m(aH+r)
=
\widetilde T_{m-1}(r)
+
H\Lambda\left(a+\frac rH\right).
\end{equation}
At the endpoint $a=2,r=H$, this is interpreted by continuity; both sides
are well-defined, and the identity follows from
$\Lambda(3)=0$ and $\widetilde T_{m-1}(H)=0$. We prove by induction a strengthened statement. For $m\ge0$, $L=3^m$, and
$k_1,k_2,k_3\in\{0,\dots,L\}$, set
\[
k:=k_1+k_2+k_3
\]
and define the compression defect
\[
\Delta_m(k_1,k_2,k_3)
:=
\sum_{i=1}^3\widetilde T_m(k_i)
+
\sum_{i=1}^3\left|k_i-\frac k3\right|
-
\widetilde T_{m+1}(k).
\]
We claim that
\begin{equation}\label{eq:strengthened-defect}
\Delta_m(k_1,k_2,k_3)
\ge
L\,
\Gamma\left(
\frac{k_1}{L},
\frac{k_2}{L},
\frac{k_3}{L}
\right).
\end{equation}
Since $\Gamma\ge0$, this immediately implies the lower bound in
\eqref{eq:finite-compression}.\\

Consider the base case $m=0$, one has $L=1$ and
\[
\widetilde T_0(0)=\widetilde T_0(1)=0.
\]
The triples $(k_1,k_2,k_3)$ lie in $\{0,1\}^3$. If $k=0$ or
$k=3$, both sides vanish. If $k=1$ or $k=2$, then
\[
\sum_{i=1}^3\left|k_i-\frac k3\right|=\frac43
=
\widetilde T_1(k).
\]
Thus
\[
\Delta_0(k_1,k_2,k_3)=0.
\]
By Lemma \ref{lem:local-debt}, $\Gamma=0$ on $\{0,1\}^3$. Therefore
\eqref{eq:strengthened-defect} holds for $m=0$.\\

Assume now that \eqref{eq:strengthened-defect} has been proved at depth
$m-1$. Let $L=3^m$ and $H=3^{m-1}$, so $L=3H$. Write each $k_i$
uniquely as
\[
k_i=a_iH+r_i,
\qquad
a_i\in\{0,1,2\},
\qquad
0\le r_i\le H,
\]
using the convention
\[
a_i:=\min\{\lfloor k_i/H\rfloor,2\},
\qquad
r_i:=k_i-a_iH.
\]
Thus $k_i=L=3H$ is represented by $a_i=2,r_i=H$.
Put
\[
\rho_i:=\frac{r_i}{H},
\qquad
a:=(a_1,a_2,a_3),
\qquad
\rho:=(\rho_1,\rho_2,\rho_3).
\]
Also set
\[
A:=a_1+a_2+a_3,
\qquad
R:=\rho_1+\rho_2+\rho_3,
\qquad
K:=A+R.
\]
Let
\[
q:=\min\{\lfloor K/3\rfloor,2\},
\qquad
s:=K-3q.
\]
Then
$k=HK,$
and 
$k=3qH+sH.$
Using \eqref{eq:T-recursion-scaled}, we compute
\[
\sum_{i=1}^3\widetilde T_m(k_i)
=
\sum_{i=1}^3\widetilde T_{m-1}(r_i)
+
H\sum_{i=1}^3\Lambda(a_i+\rho_i).
\]
Moreover,
\[
\sum_{i=1}^3\left|k_i-\frac k3\right|
=
H\mad(a+\rho).
\]
For the target term,
\[
\widetilde T_{m+1}(k)
=
\widetilde T_m(sH)
+
3H\Lambda(K/3).
\]
On the other hand,
\[
\widetilde T_m(r_1+r_2+r_3)
=
\widetilde T_m(HR).
\]
Since
\[
s-R=A-3q
\]
is an integer, $s$ and $R$ have the same fractional part. 
Both $HR=r_1+r_2+r_3$ and $Hs=k-3qH$ are integers in $[0,3H]$.
Applying \eqref{eq:T-recursion-scaled} to $HR$ and $Hs$ gives
\[
\widetilde T_m(HR)-\widetilde T_m(Hs)
=
H\bigl(\Lambda(R)-\Lambda(s)\bigr).
\]
Combining the preceding identities yields the exact defect recursion
\begin{equation}\label{eq:defect-recursion}
\Delta_m(k_1,k_2,k_3)
=
\Delta_{m-1}(r_1,r_2,r_3)
+
H\mathcal L_a(\rho).
\end{equation}

By the induction hypothesis,
\[
\Delta_{m-1}(r_1,r_2,r_3)
\ge
H\Gamma(\rho).
\]
Therefore \eqref{eq:defect-recursion} gives
\[
\Delta_m(k_1,k_2,k_3)
\ge
H\bigl(\Gamma(\rho)+\mathcal L_a(\rho)\bigr).
\]
By the local debt inequality, Lemma \ref{lem:local-debt},
\[
\Gamma(\rho)+\mathcal L_a(\rho)
\ge
3\Gamma\left(\frac{a+\rho}{3}\right).
\]
Since
\[
\frac{a_i+\rho_i}{3}
=
\frac{a_iH+r_i}{3H}
=
\frac{k_i}{L},
\]
we obtain
\[
\Delta_m(k_1,k_2,k_3)
\ge
3H
\Gamma\left(
\frac{k_1}{L},
\frac{k_2}{L},
\frac{k_3}{L}
\right)
=
L
\Gamma\left(
\frac{k_1}{L},
\frac{k_2}{L},
\frac{k_3}{L}
\right).
\]
This proves the strengthened induction and hence the lower bound in
\eqref{eq:finite-compression}.\\

It remains to show that the displayed one-residual triples attain equality. We split into five cases as in Lemma \ref{lem:one-step-ternary}:
\begin{enumerate}
\item 
If $0\le k\le L$, then the triple $(0,0,k)$ gives
\[
\sum_{i=1}^{3}\widetilde T_m(k_i)
+
\sum_{i=1}^{3}\left|k_i-\frac{k}{3}\right|
=
\widetilde T_m(k)
+
\frac{4k}{3}.
\]
Since $k/(3L)\le1/3$, this equals $\widetilde T_{m+1}(k)$ by the first
piece of $\psi_3$.
\item 
If $L\le k\le2L$, then the triple $(0,k-L,L)$ gives
\[
\widetilde T_m(k-L)
+
\left|0-\frac{k}{3}\right|
+
\left|k-L-\frac{k}{3}\right|
+
\left|L-\frac{k}{3}\right|.
\]
\item 
For $L\le k\le3L/2$, this equals
\[
\widetilde T_m(k-L)+2L-\frac{2k}{3}.
\]
This is the first middle piece of $\widetilde T_{m+1}(k)$.
\item 
For $3L/2\le k\le2L$, this equals
\[
\widetilde T_m(k-L)+\frac{2k}{3}.
\]
This is the second middle piece of $\widetilde T_{m+1}(k)$.
\item 
Finally, if $2L\le k\le3L$, then the triple $(k-2L,L,L)$ gives
\[
\widetilde T_m(k-2L)+\frac{4(3L-k)}{3},
\]
which is exactly $\widetilde T_{m+1}(k)$ by the last piece of
$\psi_3$.
\end{enumerate}

Thus the lower bound is sharp, and \eqref{eq:finite-compression} follows.
\end{proof}

We now show that $T_3$ is an admissible Bellman function.

\begin{proposition}
\label{prop:T3-admissible}
The function $T_3$ belongs to $\mathcal B(1,1,3)$. Equivalently, for
all $x_1,x_2,x_3\in[0,1]$, one has
\begin{equation}\label{eq:T3-admissibility}
T_3(\bar x)
\le
\frac13\sum_{i=1}^{3}T_3(x_i)
+
\frac13\sum_{i=1}^{3}|x_i-\bar x|,\quad 
\bar x=\frac{x_1+x_2+x_3}{3}.
\end{equation}
\end{proposition}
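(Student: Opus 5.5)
The plan is to deduce Proposition \ref{prop:T3-admissible} from the finite compression identity, Lemma \ref{lem:finite-ternary-compression}, by a density and continuity argument. Continuity, non-negativity and $T_3(0)=T_3(1)=0$ were already recorded after \eqref{eq:T3-def-proof}. Hence only the three-point inequality \eqref{eq:T3-admissibility} needs proof, and we first prove it on ternary grid points.

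\emph{Step 1 (grid points).} Let $x_i=k_i/L$ with $L=3^m$ and $k_i\in\{0,\dots,L\}$. Put $k=k_1+k_2+k_3$, so that $\bar x=k/(3L)$. The lower-bound half of \eqref{eq:finite-compression} (indeed already \eqref{eq:strengthened-defect} together with $\Gamma\ge0$) gives
\[
\widetilde T_{m+1}(k)\le\sum_{i=1}^3\widetilde T_m(k_i)+\sum_{i=1}^3\Bigl|k_i-\frac k3\Bigr|.
\]
Dividing by $3L=3^{m+1}$ and using \eqref{eq:scaled-Tm}, this reads
\[
T_{3,m+1}(\bar x)\le\frac13\sum_i T_{3,m}(x_i)+\frac13\sum_i|x_i-\bar x|.
\]

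\emph{Step 2 (remove the truncation).} For $x\in N_m^{(3)}$ we have $\{3^jx\}=0$ for $j\ge m$, and $\psi_3(0)=0$. Hence $T_{3,m}(x_i)=T_3(x_i)$ and $T_{3,m+1}(\bar x)=T_3(\bar x)$, since $\bar x\in N_{m+1}^{(3)}$. This yields \eqref{eq:T3-admissibility} whenever $x_1,x_2,x_3\in N^{(3)}_m$ for a common $m$. Any finitely many points of $N$ lie in a common $N_m$ because the $N_m$ are nested.

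\emph{Step 3 (extend to $[0,1]^3$).} Given arbitrary $x_i\in[0,1]$, choose $x_i^{(m)}\in N_m$ with $x_i^{(m)}\to x_i$; then $\bar x^{(m)}\to\bar x$. Both sides of the inequality are continuous in $(x_1,x_2,x_3)$, because $T_3$ is continuous (uniform convergence) and the mean absolute deviation is continuous. Passing to the limit therefore gives \eqref{eq:T3-admissibility}, so $T_3\in\mathcal B(1,1,3)$.

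The substantive difficulty has already been absorbed into Lemma \ref{lem:local-debt}, the exact 729-inequality verification, and the defect recursion \eqref{eq:defect-recursion}. The only point that needs care here is Step 2: we must check that the indices match, namely that $\widetilde T_{m+1}$ and $\widetilde T_m$ at grid points agree with the full series $T_3$. Once that bookkeeping is confirmed, the proposition follows, and Theorem \ref{mthm:1} then yields $V_3\ge T_3$.
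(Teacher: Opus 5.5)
Your proposal is correct and follows the paper's proof essentially step for step. You take the grid-point case from the inequality half of Lemma~\ref{lem:finite-ternary-compression} after dividing by $3^{m+1}$, identify $T_{3,m}=T_3$ on $N_m^{(3)}$ and $T_{3,m+1}=T_3$ at $\bar x\in N_{m+1}^{(3)}$, and then pass to all of $[0,1]^3$ by density and continuity of $T_3$; your remark that only \eqref{eq:strengthened-defect} with $\Gamma\ge0$ is needed, not the attainment part of the lemma, is accurate.
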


\begin{proof}
First assume $x_1,x_2,x_3\in N_m^{(3)}$. Write
\[
x_i=\frac{k_i}{3^m},
\qquad
0\le k_i\le3^m,
\quad 
k:=k_1+k_2+k_3
,\quad 
\bar x=\frac{k}{3^{m+1}}.
\]
By Lemma \ref{lem:finite-ternary-compression},
\[
\widetilde T_{m+1}(k)
\le
\sum_{i=1}^{3}\widetilde T_m(k_i)
+
\sum_{i=1}^{3}\left|k_i-\frac{k}{3}\right|.
\]
Dividing by $3^{m+1}$, and using
\[
\widetilde T_{m+1}(k)
=
3^{m+1}T_{3,m+1}(\bar x),
\qquad
\widetilde T_m(k_i)
=
3^mT_{3,m}(x_i),
\]
gives
\[
T_{3,m+1}(\bar x)
\le
\frac13\sum_{i=1}^{3}T_{3,m}(x_i)
+
\frac13\sum_{i=1}^{3}|x_i-\bar x|.
\]
Since $x_i\in N_m^{(3)}$, one has
\[
T_3(x_i)=T_{3,m}(x_i).
\]
Since $\bar x\in N_{m+1}^{(3)}$, one has
\[
T_3(\bar x)=T_{3,m+1}(\bar x).
\]
Therefore \eqref{eq:T3-admissibility} holds for triples of ternary rational
points.\\

Finally, $N^{(3)}=\bigcup_m N_m^{(3)}$ is dense in $[0,1]$, and
$T_3$ is continuous. Passing to limits gives
\eqref{eq:T3-admissibility} for all $x_1,x_2,x_3\in[0,1]$. Since
$T_3(0)=T_3(1)=0$, we conclude that $T_3\in\mathcal B(1,1,3).$
\end{proof}

\begin{lemma}[Extremizing ternary construction]
\label{lem:ternary-extremizing-construction}
For every $x\in[0,1]$, there exists a measurable set
$E_x\subset[0,1)$ such that
$
|E_x|=x
$
and
\[
\|S_1(\mathbbm 1_{E_x})\|_1
=
T_3(x).
\]
Consequently, $V_3(x)\le T_3(x)$.
\end{lemma}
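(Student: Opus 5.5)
We construct $E_x$ recursively, following the one-residual splitting of Lemma \ref{lem:one-step-ternary}. At each ternary interval $I$ with prescribed density $y\in[0,1]$, split $I$ into its three children and assign them the densities of the one-residual split of $y$. If $y\le 1/3$ these are $(0,0,3y)$. If $1/3\le y\le 2/3$ they are $(0,3y-1,1)$. If $y\ge 2/3$ they are $(3y-2,1,1)$.

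Children with density $0$ are left empty, and children with density $1$ are filled entirely. Only the single residual child, with density $\{3y\}$ (or $1$ or $0$ at the endpoints), is split further. Starting from $[0,1)$ with density $x$, this produces a nested sequence of residual intervals $J_0=[0,1)\supset J_1\supset J_2\supset\cdots$ with $|J_j|=3^{-j}$. The density on $J_j$ is $y_j=\{3^j x\}$, up to the endpoint convention. Define $E_x$ as the union of all the fully filled children, which is a countable disjoint union of ternary intervals.

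The first step is to check that the densities are consistent, i.e.\ $|E_x\cap I|/|I|$ equals the assigned density for every interval $I$ in the construction. At generation $j$, the mass assigned to $J_j$ is $3^{-j}y_j$. The mass not yet realized by filled children lies inside $J_j$, so it is at most $3^{-j}\to0$. Hence $|E_x\cap J_j|=3^{-j}y_j$ exactly, and in particular $|E_x|=x$. For any other interval of the construction the density is $0$ or $1$, and the martingale is constant below that interval.

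Next we compute the one-variation. The martingale differences vanish on every interval that is already fully empty or fully filled. So the only nonzero contributions come from splitting the intervals $J_j$. By the computation in Lemma \ref{lem:one-step-ternary}, the contribution of the split at $J_j$ to $\|S_1\|_1$ is
\[
|J_j|\,\frac13\sum_{i}|x_{J_{j,i}}-y_j| \;=\; 3^{-j}\psi_3(y_j).
\]
By monotone convergence (or simply since all terms are nonnegative),
\[
\|S_1(\mathbbm 1_{E_x})\|_1=\sum_{j\ge0}3^{-j}\psi_3(\{3^jx\})=T_3(x).
\]
Equivalently, one can iterate the recursion \eqref{eq:T3-recursion}. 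The inequality $V_3(x)\le T_3(x)$ then follows from the definition of $V_3$.

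The main care point is the endpoint convention when $y_j\in\{1/3,2/3\}$ or $y_j=1$. Here $\{3y_j\}$ could be $0$ while the residual child actually has density $1$. We handle this by choosing the split that makes the residual child empty or full, after which the process terminates with zero further cost. Since $\psi_3(0)=\psi_3(1)=0$, the formula $T_3(x)$ is unaffected, because $T_3$ is given by the same series with these terms equal to zero. The cases $x=0$ and $x=1$ are trivial: take $E_x=\emptyset$ or $E_x=[0,1)$, both of cost $0=T_3(x)$.
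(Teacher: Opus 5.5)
Your proof is correct and follows essentially the same route as the paper. Both use the one-residual recursive construction, the fact that $d_{j+1}$ is supported on the residual interval with $\int|d_{j+1}|=3^{-j}\psi_3(\{3^jx\})$, a mass-conservation limit to get $|E_x|=x$, and monotone convergence to sum the costs; your explicit check that $|E_x\cap J_j|=3^{-j}y_j$, obtained by letting the unrealized mass (at most $3^{-N}$, inside $J_N$) tend to zero, makes precise a density-consistency step the paper uses only implicitly.
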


\begin{proof}
The cases $x=0$ and $x=1$ are immediate: take
$E_0=\emptyset$ and $E_1=[0,1)$. Assume now that $0<x<1$.\\

We construct $E_x$ recursively. At each step there is at most one
residual ternary interval. If the current relative density is $r$, we
apply the one-residual splitting from Lemma \ref{lem:one-step-ternary}.
\begin{enumerate}
\item 
If $0<r<1/3$, declare two children empty and continue inside the third
child with relative density $3r$.

\item
If $1/3<r<2/3$, declare one child empty, one child full, and continue
inside the remaining child with relative density $3r-1$.
\item If $2/3<r<1$, declare two children full and continue inside the remaining
child with relative density $3r-2$.

\end{enumerate}
At the boundary values $r=1/3$ and $r=2/3$, the same splitting gives
only terminal children after the split; equivalently, the next residual
density is $\{3r\}=0$, and the construction stops. Thus in all cases the
next residual density is
\[
\{3r\}.
\]

Let $I_j$ be the residual interval after $j$ steps, assuming we do not stop. Then
\[
|I_j|=3^{-j},
\qquad
r_j=\{3^j x\}.
\]
At the next splitting, the density on $I_j$ is $r_j$, and the local
cost is $\psi_3(r_j)$. Therefore
\[
\int_{I_j}|d_{j+1}|\,dy
=
|I_j|\psi_3(r_j)
=
3^{-j}\psi_3(\{3^j x\}).
\]
Outside $I_j$, all intervals have already been declared full or empty,
so all later martingale differences vanish there.\\

The residual intervals are nested and have lengths going to zero. Hence
the union of all intervals declared full defines a measurable set
$E_x\subset[0,1)$. After $N$ steps, the construction has already
declared a finite collection of ternary intervals full, and one residual
interval of length $3^{-N}$ remains, with relative density
$\{3^N x\}$. Thus
\[
x
=
\text{mass declared full after }N\text{ steps}
+
3^{-N}\{3^N x\}.
\]
Letting $N\to\infty$, and using $0\le \{3^N x\}\le1$, gives
\[
|E_x|=x.
\]

Finally, since all terms are non-negative,
\[
S_1(\mathbbm 1_{E_x})
=
\sum_{m\ge1}|d_m|
\]
pointwise as an increasing limit of partial sums. By monotone convergence,
\[
\|S_1(\mathbbm 1_{E_x})\|_1
=
\sum_{j=0}^{\infty}
\int_{I_j}|d_{j+1}|\,dy.
\]
Using the cost computation above,
\[
\|S_1(\mathbbm 1_{E_x})\|_1
=
\sum_{j=0}^{\infty}
3^{-j}\psi_3(\{3^jx\})
=
T_3(x).
\]
This proves the lemma.
\end{proof}
Before concluding the proof we shall present the asymptotics behavior.
\begin{lemma}
\label{lem:T3-asymptotics}
For $0<x<1$,
\[
T_3(x)
\asymp
x^*\log\frac1{x^*},
\qquad
x^*=\min(x,1-x).
\]
\end{lemma}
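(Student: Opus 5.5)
The natural route is to compare $T_3$ with the ordinary ternary Takagi--van der Waerden function $\omega_3$ and then invoke Lemma~\ref{thm:wn-properties}(d) with $n=3$. The comparison can be done term by term at the level of the generators. We have $\|t\|_{1/3}=\min\{\dist(t,\mathbb Z),1/3\}$, and $\psi_3$ is built from the four affine pieces in \eqref{eq:psi3-piecewise-proof}. A direct check on $[0,1/3]$, $[1/3,1/2]$, $[1/2,2/3]$, $[2/3,1]$ gives
\[
\|t\|_{1/3}\le\psi_3(t)\le 4\|t\|_{1/3},\qquad 0\le t\le1.
\]
On $[0,1/3]$ this reads $t\le 4t/3\le 4t$. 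On $[1/3,1/2]$ it reads $1/3\le 2(1-t)/3\le 4/3$, which holds since $2(1-t)/3\in[1/3,4/9]$. The two right-hand pieces follow by the same computation, or by the symmetry of both generators.

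Summing over $j$ with weights $3^{-j}$, evaluated at $\{3^jx\}$, gives
\[
\omega_3(x)\le T_3(x)\le 4\,\omega_3(x),\qquad 0\le x\le 1.
\]
Here one uses that $\|t\|_{1/3}$ depends only on $\{t\}$, so the $j$-th term of $\omega_3$ is exactly $3^{-j}\|\{3^jx\}\|_{1/3}$.

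Lemma~\ref{thm:wn-properties}(d) then yields $T_3(x)\asymp \omega_3(x)\asymp x^*\log(1/x^*)$ with absolute constants. Since $\omega_3(1-x)=\omega_3(x)$ by Lemma~\ref{thm:wn-properties}(c), no separate symmetry argument for $T_3$ is needed. (In any case, $\psi_3(1-t)=\psi_3(t)$ together with $\{3^j(1-x)\}=1-\{3^jx\}$ away from ternary rationals, plus continuity, gives $T_3(1-x)=T_3(x)$ directly.)

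The only real step is the generator comparison, and it is elementary. If one wanted to avoid quoting Lemma~\ref{thm:wn-properties}(d), I would argue directly for $x\le1/2$. Take $k$ with $3^{-k-1}<x\le 3^{-k}$. For $j<k$ we have $3^jx\le 1/3$, so $\{3^jx\}=3^jx$, and each term contributes $3^{-j}\cdot\frac43 3^jx=\frac43x$. This gives $T_3(x)\ge \frac43 kx\gtrsim x\log(1/x)$. For the upper bound, split the sum at $j=k$. Terms with $j\le k$ contribute at most $\frac43 x$ each, since $\psi_3(t)\le 4t/3$ for all $t$ by checking the pieces. The tail $j>k$ contributes at most $\sum_{j>k}3^{-j}\cdot\frac49\lesssim 3^{-k}\lesssim x$. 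Together these give $T_3(x)\lesssim x(k+1)\asymp x\log(1/x)$, which is the same conclusion.
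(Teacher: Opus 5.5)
Your main argument is correct, but it takes a slightly different route from the paper.

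\textbf{The paper's route.} The paper compares $\psi_3$ with the classical Takagi generator,
\[
\tfrac23\dist(t,\mathbb Z)\le\psi_3(t)\le\tfrac43\dist(t,\mathbb Z).
\]
It then proves the asymptotics of $W_3(x)=\sum_j 3^{-j}\dist(3^jx,\mathbb Z)$ by hand. When $3^{-M-1}<x\le 3^{-M}$, the first $M$ terms each equal $x$ and the tail is $O(3^{-M})$. The range $[1/3,1/2]$ is handled separately, using compactness and $T_3\ge\psi_3$.

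\textbf{Your route.} You compare with the truncated generator $\|t\|_{1/3}$ and then quote Lemma~\ref{thm:wn-properties}(d). Your sandwich is right, and it can be sharpened: $\psi_3\le\frac43\|t\|_{1/3}$ holds on every piece, because $\max\psi_3=4/9=\frac43\cdot\frac13$ on $[1/3,2/3]$. This gives the clean pointwise comparison $\omega_3\le T_3\le\frac43\omega_3$, with equality on the right at $x=1/3$.

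That comparison buys something extra. It quantifies Remark~\ref{rmk:sharpness-omega3}, and it gives $T_3\ge\omega_3$ directly at the generator level. In the paper this inequality would otherwise come from $T_3=B_{1,1,3}$ together with $P^{(3)}=\omega_3\in\mathcal B(1,1,3)$. You also correctly note that no symmetry argument for $T_3$ is needed on this route.

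The price is that the paper lists Lemma~\ref{thm:wn-properties}(d) as a ``standard fact'' with no proof or reference attached; only (a) and (b) are cited. So your argument is only as self-contained as that lemma. The paper's argument is self-contained and in effect proves (d) for $n=3$, since $\|t\|_{1/3}\le\dist(t,\mathbb Z)\le\frac32\|t\|_{1/3}$.

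\textbf{A slip in your fallback argument.} For $1/3<x\le 1/2$ you have $k=0$, so the lower bound $T_3(x)\ge\frac43 kx$ is vacuous. You need a separate step there, for instance $T_3(x)\ge\psi_3(x)\ge\frac13$ on $[1/3,1/2]$, which is what the paper does. For $k\ge1$ your bound is fine, since $\log_3(1/x)<k+1\le 2k$. Your upper-bound half of the fallback argument is correct for all $0<x\le 1/2$.
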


\begin{proof}
For $0\le t\le1$, the profile $\psi_3$ is comparable to the distance
from $t$ to the nearest integer:
\begin{equation}\label{eq:psi3-distance-comparison}
\frac23\,\dist(t,\mathbb Z)
\le
\psi_3(t)
\le
\frac43\,\dist(t,\mathbb Z).
\end{equation}
Therefore
\[
T_3(x)
\asymp
\sum_{j=0}^{\infty}
3^{-j}\dist(3^jx,\mathbb Z).
\]
Denote the last sum by
\[
W_3(x)
:=
\sum_{j=0}^{\infty}
3^{-j}\dist(3^jx,\mathbb Z).
\]
Since $\psi_3(1-t)=\psi_3(t)$, we have
$
T_3(1-x)=T_3(x)$.
It is therefore enough to consider $0<x\le1/2$.

Assume first that $0<x\le1/3$. Choose $M\ge1$ such that
\[
3^{-(M+1)}<x\le3^{-M}.
\]
For $0\le j\le M-1$, one has $3^jx\le1/3$, and hence
$\dist(3^jx,\mathbb Z)=3^jx.$
Thus
\[
W_3(x)
\ge
\sum_{j=0}^{M-1}3^{-j}3^jx
=
Mx
\gtrsim
x\log\frac1x.
\]

For the upper bound, use
$\dist(3^jx,\mathbb Z)
\le
\min\{3^jx,1/2\}.$
Then
\[
W_3(x)
\le
\sum_{j=0}^{M}3^{-j}3^jx
+
\frac12\sum_{j=M+1}^{\infty}3^{-j}
\le
(M+1)x+C3^{-M}.
\]
Since $3^{-M}\lesssim x$, this gives
$W_3(x)
\lesssim
x\log\frac1x.$
Therefore
\[
T_3(x)
\asymp
x\log\frac1x,
\qquad
0<x\le\frac13.
\]

On the compact interval $1/3\le x\le1/2$, the function
$x\log(1/x)$ is bounded above and below by positive absolute constants.
Also $T_3(x)\ge\psi_3(x)$, and $\psi_3$ is bounded below by a positive
constant on $[1/3,1/2]$. Hence the same comparison holds on
$0<x\le1/2$. By symmetry,
\[
T_3(x)
\asymp
x^*\log\frac1{x^*},
\qquad
0<x<1.
\]
\end{proof}

\begin{proof}[Proof of Theorem \ref{mthm:2}]
By Proposition \ref{prop:T3-admissible},
$T_3\in\mathcal B(1,1,3).$
Therefore Theorem \ref{mthm:1}, with $n=3$ and $\alpha=\beta=1$,
gives
\[
\|S_1(\mathbbm 1_A)\|_1
\ge
T_3(|A|)
\]
for every measurable set $A\subset[0,1)$. Hence, if $|A|=x$,
\[
\|S_1(\mathbbm 1_A)\|_1
\ge
T_3(x).
\]
Taking the infimum over all such $A$, we obtain
\[
V_3(x) =\inf_{A}\|S_1(\mathbbm 1_A)\|_1 \ge T_3(x).
\]

Conversely, Lemma \ref{lem:ternary-extremizing-construction} gives a
measurable set $E_x\subset[0,1)$ such that
$|E_x|=x$
and
\[
\|S_1(\mathbbm 1_{E_x})\|_1=T_3(x).
\]
Therefore
$
V_3(x)\le T_3(x).
$
Combining the two inequalities gives
\[
V_3(x)=T_3(x).
\]

Now define the pointwise Bellman envelope by
\[
B_{1,1,3}(x)
:=
\sup\{F(x):F\in\mathcal B(1,1,3)\}.
\]
Since $T_3\in\mathcal B(1,1,3)$, we have
\[
B_{1,1,3}(x)\ge T_3(x).
\]
Conversely, let $F\in\mathcal B(1,1,3)$. Applying Theorem
\ref{mthm:1} to the extremizing set $E_x$ from Lemma
\ref{lem:ternary-extremizing-construction}, we get
\[
F(x)
=
F(|E_x|)
\le
\|S_1(\mathbbm 1_{E_x})\|_1
=
T_3(x).
\]
Taking the supremum over all admissible $F$, we obtain
\[
B_{1,1,3}(x)\le T_3(x).
\]
Therefore $B_{1,1,3}(x)=T_3(x).
$
Thus
\[
B_{1,1,3}(x)
=
V_3(x)
=
\sum_{j=0}^{\infty}3^{-j}\psi_3(\{3^jx\}).
\]
Finally, Lemma \ref{lem:T3-asymptotics} gives
\[
T_3(x)
\asymp
x^*\log\frac1{x^*},
\qquad
x^*=\min(x,1-x),
\qquad
0<x<1.
\]
This completes the proof.
\end{proof}

\section{Proof of Theorem \ref{Thm:digit-sum-Pn}}
\label{sec:proof-gtilde-n}

Fix an integer $n\ge2$. Recall that for $k\ge0$, we write
\[
k=\sum_{\nu\ge0}\varepsilon_\nu(k)n^\nu,
\qquad
\varepsilon_\nu(k)\in\{0,1,\dots,n-1\},
\]
and define
\[
\sigma_n(k):=\sum_{\nu\ge0}\varepsilon_\nu(k),
\qquad
G_n(k):=\sum_{j=0}^{k-1}\sigma_n(j),
\qquad
G_n(0):=0.
\]
For $x\in N_m$, recall that
\begin{equation}\label{eq:Bm-proof-def}
B_m^{(n)}(x)
:=
mx-\frac{2}{(n-1)n^m}G_n(n^m x).
\end{equation}

\begin{lemma}[Digit-sum recursions]
\label{lem:digit-sum-recursions}
For all integers $q\ge0$ and all $r\in\{0,1,\dots,n-1\}$, one has
\[
\sigma_n(nq+r)=\sigma_n(q)+r,
\]
and
\begin{equation}\label{eq:G-recursion}
G_n(nq+r)
=
nG_n(q)
+
\frac{n(n-1)}2q
+
r\sigma_n(q)
+
\frac{r(r-1)}2.
\end{equation}
In particular,
\begin{equation}\label{eq:G-nq}
G_n(nq)
=
nG_n(q)
+
\frac{n(n-1)}2q.
\end{equation}
Moreover, for every $m\ge0$,
\begin{equation}\label{eq:G-nm}
G_n(n^m)
=
m\,\frac{n^m(n-1)}2.
\end{equation}
\end{lemma}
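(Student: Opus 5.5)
The digit recursion is immediate from the base-$n$ expansion: if $k=nq+r$ with $0\le r\le n-1$, then the lowest digit of $k$ is $r$, and the remaining digits are those of $q$ shifted by one place. Hence $\sigma_n(nq+r)=\sigma_n(q)+r$. I would state this first, since every other identity follows from it by summation.

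For \eqref{eq:G-recursion}, I would split the range $\{0,\dots,nq+r-1\}$ into the $q$ complete blocks $\{nj,\dots,nj+n-1\}$, $0\le j\le q-1$, followed by the partial block $\{nq,\dots,nq+r-1\}$.

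On a complete block, the digit recursion gives
\[
\sum_{\rho=0}^{n-1}\bigl(\sigma_n(j)+\rho\bigr)=n\sigma_n(j)+\frac{n(n-1)}2 .
\]
Summing over $j=0,\dots,q-1$ contributes $nG_n(q)+\frac{n(n-1)}2q$.

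The partial block contributes
\[
\sum_{\rho=0}^{r-1}\bigl(\sigma_n(q)+\rho\bigr)=r\sigma_n(q)+\frac{r(r-1)}2 .
\]
Adding the two contributions gives \eqref{eq:G-recursion}. Taking $r=0$ gives \eqref{eq:G-nq}; here the partial block is empty, so the convention $G_n(0)=0$ causes no issues.

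Finally, \eqref{eq:G-nm} follows by induction on $m$. The base case is $G_n(1)=\sigma_n(0)=0$. For the inductive step, apply \eqref{eq:G-nq} with $q=n^{m-1}$:
\[
G_n(n^m)=n\cdot(m-1)\frac{n^{m-1}(n-1)}2+\frac{n(n-1)}2n^{m-1}=m\,\frac{n^m(n-1)}2 .
\]

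As a check on \eqref{eq:G-nm}, one can count directly: each of the $m$ digit positions cycles uniformly through $\{0,\dots,n-1\}$ over $0\le k<n^m$. Each position therefore contributes $n^{m-1}\cdot\frac{n(n-1)}2$, and the $m$ positions together give the same value. There is no real obstacle here. The only point needing care is the bookkeeping of the partial block and the $r=0$ edge case.
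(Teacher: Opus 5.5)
Your proposal is correct and matches the paper's proof step for step. Both use the digit-shift identity, split $\{0,\dots,nq+r-1\}$ into $q$ complete blocks plus a partial block of length $r$, take $r=0$ to get \eqref{eq:G-nq}, and iterate from $G_n(1)=0$ to get \eqref{eq:G-nm}. Your direct digit-counting check of \eqref{eq:G-nm} is a valid independent confirmation, but the proof does not need it.
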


\begin{proof}
Multiplication by $n$ shifts the base-$n$ expansion of $q$ one place
to the left. Adding $r$ inserts $r$ as the least significant digit.
Therefore
\[
\sigma_n(nq+r)=\sigma_n(q)+r.
\]

For $G_n$, split the defining sum into complete blocks of length $n$
and one remaining tail:
\[
G_n(nq+r)
=
\sum_{a=0}^{q-1}\sum_{\rho=0}^{n-1}\sigma_n(na+\rho)
+
\sum_{\rho=0}^{r-1}\sigma_n(nq+\rho).
\]
Using $\sigma_n(na+\rho)=\sigma_n(a)+\rho$, the complete blocks give
\[
\sum_{a=0}^{q-1}
\left(
n\sigma_n(a)+\frac{n(n-1)}2
\right)
=
nG_n(q)
+
\frac{n(n-1)}2q.
\]
The tail gives
\[
\sum_{\rho=0}^{r-1}(\sigma_n(q)+\rho)
=
r\sigma_n(q)+\frac{r(r-1)}2.
\]
This proves \eqref{eq:G-recursion}. Taking $r=0$ gives
\eqref{eq:G-nq}. Finally, iterating \eqref{eq:G-nq} and using $G_n(1)=0$
gives \eqref{eq:G-nm}.
\end{proof}

\begin{lemma}[Compatibility of finite levels]
\label{lem:Bm-compatibility}
For every $m\ge0$,
\[
B_{m+1}^{(n)}\big|_{N_m}=B_m^{(n)}.
\]
Consequently, if $x\in N_{m_0}$, then
\[
B_m^{(n)}(x)=B_{m_0}^{(n)}(x)
\qquad
\text{for every }m\ge m_0.
\]
In particular, the limit
\[
P^{(n)}(x):=\lim_{m\to\infty}B_m^{(n)}(x),
\qquad x\in N,
\]
exists and is finite.
\end{lemma}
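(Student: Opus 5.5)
The plan is to verify $B_{m+1}^{(n)}(x)=B_m^{(n)}(x)$ for $x\in N_m$ by direct substitution, using the identity \eqref{eq:G-nq} from Lemma~\ref{lem:digit-sum-recursions}. Once this one-step compatibility is established, the remaining claims are immediate.

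Fix $x\in N_m$ and write $x=q/n^m$ with $q\in\{0,\dots,n^m\}$ an integer. Since $N_m\subset N_{m+1}$, we have $n^{m+1}x=nq$. By \eqref{eq:Bm-proof-def},
\[
B_{m+1}^{(n)}(x)=(m+1)x-\frac{2}{(n-1)n^{m+1}}G_n(nq).
\]
Substituting $G_n(nq)=nG_n(q)+\frac{n(n-1)}{2}q$ from \eqref{eq:G-nq}, the second term becomes
\[
\frac{2}{(n-1)n^{m+1}}\,nG_n(q)+\frac{2}{(n-1)n^{m+1}}\cdot\frac{n(n-1)}{2}q
=\frac{2}{(n-1)n^m}G_n(q)+\frac{q}{n^m}.
\]
The last term equals $x$, so it cancels the extra $x$ in $(m+1)x$. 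What remains is
\[
mx-\frac{2}{(n-1)n^m}G_n(n^mx)=B_m^{(n)}(x).
\]
The endpoint $q=n^m$ (that is, $x=1$) needs no special treatment, since \eqref{eq:G-nq} holds for every integer $q\ge0$. The case $m=0$ is also covered: there $N_0=\{0,1\}$, and the same computation applies.

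For the second claim, fix $x\in N_{m_0}$. Then $x\in N_m$ for every $m\ge m_0$, so induction on $m$ gives $B_{m+1}^{(n)}(x)=B_m^{(n)}(x)=\cdots=B_{m_0}^{(n)}(x)$. Hence the sequence $(B_m^{(n)}(x))_{m\ge m_0}$ is eventually constant, so its limit exists and equals the finite value $B_{m_0}^{(n)}(x)$. This defines $P^{(n)}$ on $N=\bigcup_m N_m$.

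There is no real obstacle here. The only point needing care is the bookkeeping of the factors of $n$ in the normalization $\frac{2}{(n-1)n^m}$, so that the linear term $\frac{n(n-1)}{2}q$ from \eqref{eq:G-nq} produces exactly $x$ and cancels the increment from $mx$ to $(m+1)x$.
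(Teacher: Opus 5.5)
Your proposal is correct and follows the paper's own proof: you write $x=k/n^m$ and substitute $G_n(nk)=nG_n(k)+\frac{n(n-1)}{2}k$ into the definition of $B_{m+1}^{(n)}$. The linear term then contributes exactly $x$ and cancels the increment from $mx$ to $(m+1)x$, and stabilization and existence of the limit follow by iteration.
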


\begin{proof}
Let $x=k/n^m\in N_m$. Then $x=nk/n^{m+1}$, and by
\eqref{eq:G-nq},
\[
G_n(nk)
=
nG_n(k)
+
\frac{n(n-1)}2k.
\]
Therefore
\begin{align*}
B_{m+1}^{(n)}(x)
&=
(m+1)\frac{k}{n^m}
-
\frac{2}{(n-1)n^{m+1}}G_n(nk)
\\
&=
(m+1)\frac{k}{n^m}
-
\frac{2}{(n-1)n^{m+1}}
\left(
nG_n(k)+\frac{n(n-1)}2k
\right)
\\
&=
m\frac{k}{n^m}
-
\frac{2}{(n-1)n^m}G_n(k)
=
B_m^{(n)}(x).
\end{align*}
The stabilization and the existence of the limit follow by iteration.
\end{proof}

\begin{lemma}[Endpoint values]
\label{lem:Pn-endpoints}
The function $P^{(n)}$ satisfies
\[
P^{(n)}(0)=P^{(n)}(1)=0.
\]
\end{lemma}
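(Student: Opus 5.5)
Both endpoint values come from evaluating the defining formula \eqref{eq:Bm-proof-def} at a convenient level and then invoking the stabilization of Lemma~\ref{lem:Bm-compatibility}.

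For $x=0$, we note that $0\in N_0$ and that $n^m\cdot 0=0$. Since $G_n(0)=0$ by convention, we get $B_m^{(n)}(0)=0$ for every $m$. Hence $P^{(n)}(0)=0$.

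For $x=1$, we note that $1\in N_m$ for every $m\ge0$, with $n^m x=n^m$. Formula \eqref{eq:G-nm} of Lemma~\ref{lem:digit-sum-recursions} gives
\[
G_n(n^m)=m\,\frac{n^m(n-1)}{2}.
\]
Substituting this into \eqref{eq:Bm-proof-def} yields
\[
B_m^{(n)}(1)=m-\frac{2}{(n-1)n^m}\cdot m\,\frac{n^m(n-1)}{2}=m-m=0.
\]
Thus $B_m^{(n)}(1)=0$ for all $m$, and so $P^{(n)}(1)=0$.

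Alternatively, one could simply check level $m_0=0$: there $B_0^{(n)}(1)=0-2G_n(1)/(n-1)=0$, because $G_n(1)=\sigma_n(0)=0$. Lemma~\ref{lem:Bm-compatibility} then propagates this value to all levels. I do not expect any genuine obstacle here; the only point needing care is matching the convention $G_n(0)=0$ and the value $G_n(1)=0$ with the identity \eqref{eq:G-nm} at $m=0$.
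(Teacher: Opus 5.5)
Your proof is correct and is essentially the paper's own argument: $B_m^{(n)}(0)=0$ follows from $G_n(0)=0$, and $B_m^{(n)}(1)=0$ follows from the identity $G_n(n^m)=m\,n^m(n-1)/2$, after which one passes to the stabilized limit. Your alternative of checking only level $m_0=0$, using $G_n(1)=0$, and then propagating the value with Lemma~\ref{lem:Bm-compatibility} is also valid.
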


\begin{proof}
Clearly $B_m^{(n)}(0)=0$. Also, by \eqref{eq:G-nm},
\[
G_n(n^m)
=
m\,\frac{n^m(n-1)}2.
\]
Hence
\[
B_m^{(n)}(1)
=
m-\frac{2}{(n-1)n^m}G_n(n^m)
=
m-\frac{2}{(n-1)n^m}
\cdot
m\frac{n^m(n-1)}2
=
0.
\]
Passing to the stabilized limit gives the claim.
\end{proof}

We recall the following summatory digit-sum inequality (introduced in its original notation first and then adapted to ours) 

\begin{lemma}[Allouche--Stipulanti {\cite[Theorem~4.2]{AlloucheStipulanti2025Summing}}]
\label{lem:AS-summatory-digit-sum}
Let $b\ge2$, and let $s_b(m)$ denote the sum of the base-$b$ digits of
$m$. Put
\[
S_b(M):=\sum_{j=0}^{M-1}s_b(j),\qquad S_b(0)=0.
\]
If $1\le r\le b$ and $0\le m_1\le\cdots\le m_r$, then
\[
\sum_{i=1}^r S_b(m_i)+\sum_{i=1}^{r-1}(r-i)m_i
\le
S_b\left(\sum_{i=1}^r m_i\right).
\]
\end{lemma}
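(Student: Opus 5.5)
Since this is a result quoted from \cite{AlloucheStipulanti2025Summing}, the paper may simply invoke it. If a self-contained argument is wanted, I would prove it by strong induction on $M:=m_1+\cdots+m_r$, using the base-$b$ recursion of Lemma \ref{lem:digit-sum-recursions}. That lemma is stated for $G_n$, but its proof is base-independent, so for $q\ge0$ and $0\le\rho\le b-1$,
\[
S_b(bq+\rho)=bS_b(q)+\tfrac{b(b-1)}2q+\rho\, s_b(q)+\tfrac{\rho(\rho-1)}2 .
\]
The base cases are trivial. For $r=1$ the statement is an equality, and if $m_1=\dots=m_{r-1}=0$ it is again an equality.

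\emph{Induction step.} Write $m_i=bq_i+\rho_i$ with $0\le\rho_i\le b-1$. Monotonicity $m_1\le\dots\le m_r$ gives $q_1\le\dots\le q_r$, so the induction hypothesis applies to $(q_i)$. Next write $\sum_i\rho_i=cb+\rho$ with carry $0\le c\le r-1$, which holds because $r\le b$. Then $M=bQ+\rho$ with $Q=\sum_i q_i+c$.

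\emph{Expanding both sides.} Substitute the recursion into both sides.
\begin{itemize}
\item The terms $bS_b(q_i)$ and $\tfrac{b(b-1)}2 q_i$ are controlled by $b$ times the induction hypothesis for the $q_i$. The weight term $b\sum(r-i)q_i$ is absorbed into the part $\sum(r-i)m_i$ on the left.
\item What remains is a ``low-digit'' inequality relating $\sum_i\rho_i s_b(q_i)+\sum_i\rho_i(\rho_i-1)/2+\sum_i(r-i)\rho_i$ to the extra contribution of the carry on the right. That contribution is $b\bigl(S_b(\sum q_i+c)-S_b(\sum q_i)\bigr)+\tfrac{b(b-1)}2c+\rho\,s_b(Q)+\rho(\rho-1)/2$.
\end{itemize}
For the carry increment I would use the telescoping identity $S_b(N+c)-S_b(N)=\sum_{t=0}^{c-1}s_b(N+t)$ together with subadditivity-type bounds $s_b(q_i)\le s_b(\sum_j q_j+t)+(\text{number of carries})$. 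The key point is that the weights $(r-i)$ attach the largest coefficients to the smallest $m_i$, which matches the fact that each carry is paid for by at least one summand.

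\emph{Main obstacle.} The hard step is the carry bookkeeping: $s_b$ is not monotone, and $s_b(Q)$ may be much smaller than $\sum_i s_b(q_i)$. If the direct estimate fails, my first fallback is to strengthen the induction hypothesis. I would prove the inequality with an explicit nonnegative defect term, namely $(b-1)\times(\text{number of carries in } q_1+\dots+q_r)$, in the spirit of Kummer's theorem and of the debt function $\Gamma$ in Lemma \ref{lem:local-debt}. The lost digit sum $\sum_i s_b(q_i)-s_b(\sum_i q_i)=(b-1)\cdot\#\text{carries}$ is then carried through the induction as a resource rather than lost. The case $r=2$, $b=2$ (the classical $S(a)+S(c)+\min(a,c)\le S(a+c)$) serves as a check that this strengthened form closes.
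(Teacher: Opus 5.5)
\paragraph{Verdict.} The paper gives no proof of this lemma. It is quoted from Theorem~4.2 of Allouche--Stipulanti and used only with $b=r=n$, so your opening remark is exactly what the paper does. The self-contained induction you sketch, however, has a genuine gap at the step you flag, and your proposed fallback does not work.

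\paragraph{Where the induction breaks.} After you use $b$ times the induction hypothesis for $(q_i)$, the residual low-digit inequality is false. Take $b=r=2$ and $(m_1,m_2)=(2,7)$. Then $q=(1,3)$, $\rho=(0,1)$, $c=0$, $Q=4$, and the residual reads $\rho_2\,s_2(3)=2\le \rho\,s_2(4)=1$. The lemma does hold there ($12\le 13$), but only because the hypothesis at $(1,3)$ has slack $S_2(4)-S_2(1)-S_2(3)-1=1$, which the induction doubles. Your proposed strengthening (slack at least $(b-1)$ times the number of carries) is false already in the case $r=b=2$ that you offer as a check. For $(m_1,m_2)=(3,3)$ the slack is $S_2(6)-2S_2(3)-3=7-4-3=0$, although $3+3$ produces two binary carries and the quotient sum $1+1$ produces one. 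Likewise $(1,1)$ has slack $0$ but one carry.

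\paragraph{The hypothesis $r\le b$ is never used.} Your sketch cites $r\le b$ only for $c\le r-1$, but that bound holds for every $r$, since $\sum_i\rho_i\le r(b-1)<rb$. Yet the statement is false for $r>b$: with $b=2$ and $m=(1,1,1)$ the left side is $0+0+0+2+1=3>2=S_2(3)$. So whatever closes the low-digit step must use $r\le b$ there, and the proposal contains no mechanism that does.

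\paragraph{A route that closes.} The edge-isoperimetric argument that motivates the paper works.
\begin{enumerate}
\item Identify $\{0,\dots,b^D-1\}$ with $\{0,\dots,b-1\}^D$ via base-$b$ digits, and regard it as the Hamming graph $K_b^D$, where two vertices are adjacent iff exactly one digit differs.
\item Lowering one digit of $j$ gives a smaller integer and raising it gives a larger one, so $j$ has exactly $s_b(j)$ smaller neighbours. Hence $\{0,\dots,M-1\}$ spans exactly $S_b(M)$ edges.
\item By Lindsey's theorem, lexicographic initial segments maximize the number of induced edges in products of complete graphs. So no $M$-element subset spans more than $S_b(M)$ edges.
\item Take $d$ with $m_r\le b^d$. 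Inside $K_b^{d+1}$, put an initial segment of size $m_i$ into the layer with leading digit $i-1$, for $i=1,\dots,r$. This is possible exactly because $r\le b$.
\item Vertices in different layers are adjacent iff their lower $d$ digits coincide. So layers $i<j$ contribute $\min(m_i,m_j)=m_i$ edges.
\item The resulting set has size $\sum_i m_i\le b^{d+1}$ and spans $\sum_i S_b(m_i)+\sum_i(r-i)m_i$ edges. Lindsey's theorem bounds this by $S_b(\sum_i m_i)$, which is the lemma.
\end{enumerate}
For $b=2$ this is the classical hypercube proof of $S_2(a)+S_2(c)+\min(a,c)\le S_2(a+c)$.
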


We shall apply this only with $b=r=n$. Since in our notation
$S_n=G_n$, we obtain
\[
\sum_{i=1}^n G_n(k_i)+\sum_{i=1}^{n-1}(n-i)k_i
\le
G_n\left(\sum_{i=1}^n k_i\right).
\]

\begin{proposition}[Summatory digit-sum inequality]
\label{prop:digit-sum-ineq}
Let
$ 0\le k_1\le k_2\le\cdots\le k_n $
be integers, and set
$
K:=\sum_{i=1}^n k_i.
$
Then
\begin{equation}\label{eq:digit-sum-ineq}
G_n(K)-\sum_{i=1}^nG_n(k_i)
\ge
\sum_{i=1}^{n-1}(n-i)k_i.
\end{equation}
\end{proposition}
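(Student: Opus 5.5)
This is the Allouche--Stipulanti inequality of Lemma~\ref{lem:AS-summatory-digit-sum}, specialized to $b=r=n$ and translated into our notation. The proof is therefore a direct invocation of that lemma, plus a check that the hypotheses and notation match.

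First, identify the objects. For $b=n$ the Allouche--Stipulanti digit sum $s_n(j)$ is exactly our $\sigma_n(j)$, since both are the sum of the base-$n$ digits of $j$. Their summatory function $S_n(M)=\sum_{j=0}^{M-1}s_n(j)$, with $S_n(0)=0$, is therefore literally our $G_n(M)$.

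Next, check the hypotheses. We take $r=n$, which satisfies $1\le r\le b=n$. We take $m_i=k_i$, which are non-negative integers arranged in nondecreasing order, as Lemma~\ref{lem:AS-summatory-digit-sum} requires. The lemma then gives
\[
\sum_{i=1}^n G_n(k_i)+\sum_{i=1}^{n-1}(n-i)k_i\le G_n\Bigl(\sum_{i=1}^n k_i\Bigr)=G_n(K).
\]
Moving $\sum_i G_n(k_i)$ to the right-hand side yields exactly \eqref{eq:digit-sum-ineq}.

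The only point needing care is the ordering convention. The weights $(n-i)$ are largest on the smallest entries $k_1\le k_2\le\cdots$, so the right-hand side of \eqref{eq:digit-sum-ineq} is the smallest of its permuted versions. Since the statement explicitly assumes the $k_i$ are sorted increasingly, this matches the lemma's hypothesis $m_1\le\cdots\le m_r$ and nothing further is needed.

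As an optional self-contained alternative, one could prove the case $n$ summands directly by induction on $\max k_i$ using the recursion \eqref{eq:G-recursion}. One would write $k_i=nq_i+\rho_i$ and reduce to an inequality for the quotients plus a carry term. The main obstacle there would be controlling the carries, that is, the terms $\rho\,\sigma_n(q)$ when the digits $\rho_i$ sum past $n$. Because the cited theorem already covers exactly this case, I would rely on the citation rather than reprove it.
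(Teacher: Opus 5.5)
Your proposal is correct and matches the paper's proof: apply the Allouche--Stipulanti inequality (Lemma~\ref{lem:AS-summatory-digit-sum}) with $b=r=n$ and $m_i=k_i$, identify $S_n$ with $G_n$, and move $\sum_i G_n(k_i)$ to the other side. Your remark on the ordering is accurate (by rearrangement, pairing the decreasing weights $n-i$ with increasing $k_i$ gives the smallest right-hand side), but the proof does not need it.
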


\begin{proof}

This is the summatory base-$n$ digit-sum inequality of
Allouche--Stipulanti in Lemma \ref{lem:AS-summatory-digit-sum}, applied with base $b=n$ and with $r=n$. In the present
notation, their inequality says
\[
\sum_{i=1}^nG_n(k_i)
+
\sum_{i=1}^{n-1}(n-i)k_i
\le
G_n\left(\sum_{i=1}^n k_i\right),
\]
which is exactly \eqref{eq:digit-sum-ineq}.
\end{proof}

\begin{lemma}[Mean-deviation $n$-point inequality on $N$]
\label{lem:Pn-npoint-on-N}
For every $x_1,\dots,x_n\in N$,
one has
\begin{equation}\label{eq:Pn-npoint-on-N}
P^{(n)}(\bar x)
\le
\frac1n\sum_{i=1}^nP^{(n)}(x_i)
+
\frac1n\sum_{i=1}^n|x_i-\bar x|,\qquad
\bar x=\frac1n\sum_{i=1}^n x_i.
\end{equation}
\end{lemma}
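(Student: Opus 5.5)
The plan is to reduce \eqref{eq:Pn-npoint-on-N} to an integer inequality at a common finite level, and then combine Proposition~\ref{prop:digit-sum-ineq} with an elementary mean-deviation bound.

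\textbf{Step 1: common level.} Since $N=\bigcup_m N_m$ and the $N_m$ are nested, I choose $m$ with $x_1,\dots,x_n\in N_m$. I write $x_i=k_i/n^m$ with integers $0\le k_i\le n^m$ and set $K=\sum_i k_i$, so $\bar x=K/n^{m+1}\in N_{m+1}$. The inequality is symmetric in the $x_i$, so I may relabel and assume $k_1\le\cdots\le k_n$.

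\textbf{Step 2: evaluate both sides exactly.} By Lemma~\ref{lem:Bm-compatibility}, $P^{(n)}(x_i)=B^{(n)}_m(x_i)$ and $P^{(n)}(\bar x)=B^{(n)}_{m+1}(\bar x)$. Substituting \eqref{eq:Bm-proof-def}, the terms $mK/n^{m+1}$ cancel, and
\[
P^{(n)}(\bar x)-\frac1n\sum_{i=1}^nP^{(n)}(x_i)
=\frac{1}{n^{m+1}}\left(K-\frac{2}{n-1}\Bigl(G_n(K)-\sum_{i=1}^nG_n(k_i)\Bigr)\right).
\]
The mean-deviation term is $\frac1n\sum_i|x_i-\bar x|=n^{-(m+1)}\sum_i|k_i-K/n|$. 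So it suffices to prove the integer inequality
\[
K-\frac{2}{n-1}\Bigl(G_n(K)-\sum_iG_n(k_i)\Bigr)\le\sum_{i=1}^n\Bigl|k_i-\frac Kn\Bigr|.
\]

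\textbf{Step 3: apply the digit-sum inequality.} By Proposition~\ref{prop:digit-sum-ineq}, $G_n(K)-\sum_iG_n(k_i)\ge\sum_{i=1}^{n}(n-i)k_i$, where the $i=n$ term is zero. Hence the left side of the integer inequality is at most
\[
\sum_{i=1}^n k_i\Bigl(1-\frac{2(n-i)}{n-1}\Bigr)=\frac1{n-1}\sum_{i=1}^n(2i-n-1)k_i .
\]
For sorted $k_i$, the standard Gini identity gives $\sum_i(2i-n-1)k_i=\sum_{i<j}(k_j-k_i)$.

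\textbf{Step 4: Gini versus mean deviation.} Write $\bar k=K/n$. By the triangle inequality, $k_j-k_i\le|k_j-\bar k|+|k_i-\bar k|$. Summing over all pairs $i<j$, each index appears in exactly $n-1$ pairs, so
\[
\sum_{i<j}(k_j-k_i)\le(n-1)\sum_{i=1}^n|k_i-\bar k|.
\]
Dividing by $n-1$ gives exactly the integer inequality from Step 2, and this proves \eqref{eq:Pn-npoint-on-N}.

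The only nontrivial input is the Allouche--Stipulanti bound, which is already available as Proposition~\ref{prop:digit-sum-ineq}. The points that need care are the bookkeeping of normalizations in Step 2, in particular the cancellation of the $m$ and $m+1$ prefactors, and sorting the $k_i$ before invoking Proposition~\ref{prop:digit-sum-ineq}. The triangle-inequality step is routine.
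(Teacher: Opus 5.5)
Your proposal is correct and follows essentially the same route as the paper: the same common-level reduction via Lemma~\ref{lem:Bm-compatibility}, the same cancellation of the $m$-dependent terms, and the same application of Proposition~\ref{prop:digit-sum-ineq} to the sorted $k_i$. The only difference is the final elementary step: the paper writes the bracket as $\sum_i a_i k_i$ with $a_i=(2i-n-1)/(n-1)$ and uses $\sum_i a_i=0$, $|a_i|\le 1$ to center at $K/n$, whereas you use the pairwise (Gini) identity and the triangle inequality, and both give exactly the same bound.
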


\begin{proof}
Both sides of \eqref{eq:Pn-npoint-on-N} are symmetric in the points
$x_1,\dots,x_n$. Hence we may assume
\[
0\le x_1\le x_2\le\cdots\le x_n\le1.
\]
Choose $m$ large enough so that $x_i\in N_m$ for every $i$. Let $0\le k_1\le k_2\le\cdots\le k_n\le n^m.$ Write
\[
x_i=\frac{k_i}{n^m},
\qquad
K:=\sum_{i=1}^n k_i,\quad \text{and}\quad 
\bar x=\frac{K}{n^{m+1}}.
\]
By Lemma \ref{lem:Bm-compatibility},
\[
P^{(n)}(\bar x)
=
B_{m+1}^{(n)}\left(\frac{K}{n^{m+1}}\right),
\qquad
P^{(n)}(x_i)
=
B_m^{(n)}\left(\frac{k_i}{n^m}\right).
\]
Using the definition \eqref{eq:Bm-proof-def}, we get
\begin{align*}
P^{(n)}(\bar x)
-
\frac1n\sum_{i=1}^nP^{(n)}(x_i)
&=
\frac{1}{n^{m+1}}
\left[
K
-
\frac{2}{n-1}
\left(
G_n(K)-\sum_{i=1}^nG_n(k_i)
\right)
\right].
\end{align*}
By Proposition \ref{prop:digit-sum-ineq},
\[
G_n(K)-\sum_{i=1}^nG_n(k_i)
\ge
\sum_{i=1}^{n-1}(n-i)k_i.
\]
Therefore
\begin{align*}
P^{(n)}(\bar x)
-
\frac1n\sum_{i=1}^nP^{(n)}(x_i)
&\le
\frac{1}{n^{m+1}}
\left[
K
-
\frac{2}{n-1}\sum_{i=1}^{n-1}(n-i)k_i
\right].
\end{align*}
The expression in brackets can be written as
\[
\sum_{i=1}^n a_i k_i,
\qquad
a_i:=\frac{2i-n-1}{n-1}.
\]
Indeed, for $i<n$,
\[
1-\frac{2(n-i)}{n-1}
=
\frac{2i-n-1}{n-1},
\]
and for $i=n$, the same formula gives $a_n=1$.
Now observe that
\[
\sum_{i=1}^n a_i=0,
\qquad
|a_i|\le1
\quad
\text{for every }i.
\]
Set
$
\mu:=\frac Kn.
$ Since $\sum_i a_i=0$, we have
\[
\sum_{i=1}^n a_i k_i
=
\sum_{i=1}^n a_i(k_i-\mu).
\]
Hence
\[
\sum_{i=1}^n a_i k_i
\le
\sum_{i=1}^n |a_i|\,|k_i-\mu|
\le
\sum_{i=1}^n |k_i-\mu|.
\]
Therefore
\[
P^{(n)}(\bar x)
-
\frac1n\sum_{i=1}^nP^{(n)}(x_i)
\le
\frac{1}{n^{m+1}}
\sum_{i=1}^n\left|k_i-\frac Kn\right|.
\]
Finally,
\[
\frac{1}{n^{m+1}}
\sum_{i=1}^n\left|k_i-\frac Kn\right|
=
\frac1n
\sum_{i=1}^n
\left|
\frac{k_i}{n^m}
-
\frac{K}{n^{m+1}}
\right|
=
\frac1n
\sum_{i=1}^n|x_i-\bar x|.
\]
This proves \eqref{eq:Pn-npoint-on-N}.
\end{proof}

In the next lemma we write the Takagi-type representation of $P^{(n)}$. We use $\eta$ for the generator  instead of $\psi$ to avoid confusion with the sharp ternary profile from Theorem \ref{mthm:2}).

\begin{lemma}[Takagi-type representation]
\label{lem:Pn-Takagi-representation}
Let $\eta_n:[0,1]\to[0,\infty)$ be the continuous piecewise linear
function determined by
\[
\eta_n\left(\frac{\rho}{n}\right)
=
\frac{\rho(n-\rho)}{n(n-1)},
\qquad
\rho=0,1,\dots,n.
\]
Then, for every $x\in N$,
\begin{equation}\label{eq:seriesPn}
    P^{(n)}(x)
=
\sum_{j=0}^{\infty}n^{-j}\eta_n(\{n^jx\}).
\end{equation}
Moreover, the series converges uniformly on $[0,1]$. Hence it defines a
continuous extension of $P^{(n)}$ to $[0,1]$, still denoted by
$P^{(n)}$.
\end{lemma}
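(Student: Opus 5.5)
We prove the representation on each level $N_m$ by induction on $m$, using the digit-sum recursion of Lemma \ref{lem:digit-sum-recursions}. The aim is a one-step self-similarity identity for $B_m^{(n)}$ of the same shape as \eqref{eq:T3-recursion}. Write $x=k/n^{m+1}\in N_{m+1}$ with $k=\rho n^m+\ell$, where $0\le\rho\le n-1$ and $0\le\ell\le n^m$; the case $\ell=n^m$ may be absorbed by continuity. Set $y:=\{nx\}=\ell/n^m$. We will show
\[
B_{m+1}^{(n)}(x)=\eta_n(x)+\tfrac1n B_m^{(n)}(y).
\]
Iterating this identity, and using that $\{n^jx\}=0$ for $j\ge m+1$ together with $\eta_n(0)=0$, gives \eqref{eq:seriesPn} on $N_{m+1}$. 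By Lemma \ref{lem:Bm-compatibility} the level at which we compute does not matter.

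To obtain the identity we need a recursion for $G_n$ along the most significant digit rather than the least significant one, which is the one Lemma \ref{lem:digit-sum-recursions} supplies. For $0\le\ell\le n^m$,
\[
G_n(\rho n^m+\ell)=\rho\,G_n(n^m)+\sum_{a=0}^{\rho-1} a\,n^m + G_n(\ell)+\rho\,\ell .
\]
This follows by splitting the sum at multiples of $n^m$: on the block with leading digit $a$ the digit sum is $\sigma_n(j)+a$. We then use $G_n(n^m)=m n^m(n-1)/2$ from \eqref{eq:G-nm}. Substituting into \eqref{eq:Bm-proof-def} with $x=(\rho+y)/n$, the terms involving $m$ cancel against $(m+1)x$. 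What remains is $\tfrac1n B_m^{(n)}(y)$ plus an explicit affine function of $y$ on each interval $[\rho/n,(\rho+1)/n]$, namely
\[
x-\frac{2}{(n-1)n}\Bigl(\tfrac{\rho(\rho-1)}{2}+\rho y\Bigr).
\]
Evaluating at $y=0$ gives $\frac{\rho}{n}-\frac{\rho(\rho-1)}{n(n-1)}=\frac{\rho(n-\rho)}{n(n-1)}$. The function is affine in $y$ on each piece and continuous across the endpoints, so it equals $\eta_n(x)$ there. Checking this bookkeeping, especially the boundary convention $\ell=n^m$ versus $\rho+1$, is the main computational step. It is routine but must be done carefully.

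Uniform convergence follows from the bound $0\le\eta_n\le\max_\rho\eta_n(\rho/n)\le 1$ and the Weierstrass M-test with majorant $\sum n^{-j}$. Each term $x\mapsto n^{-j}\eta_n(\{n^jx\})$ is continuous on $[0,1]$, since $\eta_n(0)=\eta_n(1)=0$ removes the jumps of the fractional part. Hence the series defines a continuous function on $[0,1]$ that agrees with $P^{(n)}$ on the dense set $N$. This continuous function is the required extension, and it is unique because $N$ is dense.
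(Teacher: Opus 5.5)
Your proof is correct, but it runs the induction from the opposite end of the base-$n$ expansion. The paper compares the partial sums $\sum_{j<m}n^{-j}\eta_n(\{n^jx\})$ with $B_m^{(n)}$ on $N_m$ by splitting off the \emph{least} significant digit, $k=tn+\rho$. It uses that the previous partial sum is affine on level-$(m-1)$ intervals, together with the already proved recursion \eqref{eq:G-recursion}, to get the interpolation identity
\[
B_m^{(n)}\Bigl(\tfrac{tn+\rho}{n^m}\Bigr)=\Bigl(1-\tfrac{\rho}{n}\Bigr)B_{m-1}^{(n)}\Bigl(\tfrac{t}{n^{m-1}}\Bigr)+\tfrac{\rho}{n}B_{m-1}^{(n)}\Bigl(\tfrac{t+1}{n^{m-1}}\Bigr)+\tfrac{\rho(n-\rho)}{(n-1)n^m}.
\]
This is the analogue of Lemma \ref{thm:wn-properties}(b) for $\omega_n$, and it peels off the finest-scale term at each step.

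You split off the \emph{most} significant digit instead. That needs your block recursion for $G_n$ at multiples of $n^m$, which I checked, together with \eqref{eq:G-nm}. It yields the self-similarity $B_{m+1}^{(n)}(x)=\eta_n(x)+\tfrac1nB_m^{(n)}(\{nx\})$, the direct counterpart of \eqref{eq:T3-recursion}.

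Your route derives the generator rather than verifying it. The affine remainder $x-\tfrac{2}{(n-1)n}\bigl(\tfrac{\rho(\rho-1)}{2}+\rho y\bigr)$ shows why $\eta_n$ is piecewise linear with exactly those nodal values. You also never need the fact, used without comment in the paper, that the partial sums are affine on $n$-adic intervals. The paper's route reuses \eqref{eq:G-recursion} and puts $P^{(n)}$ in the same interpolation-plus-bump form as $\omega_n$, which parallels the later comparison with $\omega_n$.

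Three small points should be tightened:
\begin{enumerate}
\item Drop ``absorbed by continuity'' for $\ell=n^m$. $B_m^{(n)}$ lives on a discrete set, and it is unnecessary anyway: your block formula holds verbatim for $\ell=n^m$, since the tail indices $\rho n^m+i$ with $i<n^m$ all have leading digit $\rho$.
\item The ``continuous across the endpoints'' step should be made explicit. Either evaluate the remainder at $y=1$ directly, which gives $\tfrac{(\rho+1)(n-\rho-1)}{n(n-1)}$. Or note that the remainder for $\rho$ at $y=1$ and the remainder for $\rho+1$ at $y=0$ both equal $B_{m+1}^{(n)}((\rho+1)/n)$, because $B_m^{(n)}(0)=B_m^{(n)}(1)=0$.
\item The iteration ends with $n^{-(m+1)}B_0^{(n)}(0)=0$, using $B_0^{(n)}\equiv0$ on $N_0$.
\end{enumerate}
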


\begin{proof}
For $m\ge0$, define the finite partial sums
\[
T_m^{(n)}(x)
:=
\sum_{j=0}^{m-1}n^{-j}\eta_n(\{n^jx\}),
\]
with the convention $T_0^{(n)}\equiv0$. We first prove that
\[
T_m^{(n)}(x)=B_m^{(n)}(x),
\qquad x\in N_m.
\]

The claim is clear for $m=0$. Assume it is known at level $m-1$.
Let $0\le k<n^m$, and write
\[
k=tn+\rho,
\qquad
0\le \rho\le n-1.
\]
Then
\[
\frac{k}{n^m}
=
\left(1-\frac{\rho}{n}\right)\frac{t}{n^{m-1}}
+
\frac{\rho}{n}\frac{t+1}{n^{m-1}}.
\]
Since $T_{m-1}^{(n)}$ is linear on every $n$-adic interval of level
$m-1$,
\[
T_{m-1}^{(n)}\left(\frac{k}{n^m}\right)
=
\left(1-\frac{\rho}{n}\right)
T_{m-1}^{(n)}\left(\frac{t}{n^{m-1}}\right)
+
\frac{\rho}{n}
T_{m-1}^{(n)}\left(\frac{t+1}{n^{m-1}}\right).
\]
Also,
\[
\left\{n^{m-1}\frac{k}{n^m}\right\}
=
\frac{\rho}{n}.
\]
Therefore
\begin{align*}
T_m^{(n)}\left(\frac{k}{n^m}\right)
&=
\left(1-\frac{\rho}{n}\right)
T_{m-1}^{(n)}\left(\frac{t}{n^{m-1}}\right)
+
\frac{\rho}{n}
T_{m-1}^{(n)}\left(\frac{t+1}{n^{m-1}}\right)
+
n^{-(m-1)}\eta_n\left(\frac{\rho}{n}\right)
\\
&=
\left(1-\frac{\rho}{n}\right)
B_{m-1}^{(n)}\left(\frac{t}{n^{m-1}}\right)
+
\frac{\rho}{n}
B_{m-1}^{(n)}\left(\frac{t+1}{n^{m-1}}\right)
+
\frac{\rho(n-\rho)}{(n-1)n^m}.
\end{align*}
On the other hand, using \eqref{eq:G-recursion} and
$G_n(t+1)=G_n(t)+\sigma_n(t)$, a direct substitution in
\eqref{eq:Bm-proof-def} gives
\begin{align*}
B_m^{(n)}\left(\frac{tn+\rho}{n^m}\right)
&=
\left(1-\frac{\rho}{n}\right)
B_{m-1}^{(n)}\left(\frac{t}{n^{m-1}}\right)
+
\frac{\rho}{n}
B_{m-1}^{(n)}\left(\frac{t+1}{n^{m-1}}\right)
+
\frac{\rho(n-\rho)}{(n-1)n^m}.
\end{align*}
Thus
\[
T_m^{(n)}\left(\frac{k}{n^m}\right)
=
B_m^{(n)}\left(\frac{k}{n^m}\right)
\]
for $0\le k<n^m$. The endpoint $k=n^m$ is immediate, since both sides
vanish at $1$. This completes the induction.\\

Now let $x\in N$, and choose $m_0$ such that $x\in N_{m_0}$. Then
\[
P^{(n)}(x)=B_{m_0}^{(n)}(x)=T_{m_0}^{(n)}(x).
\]
For $j\ge m_0$, $n^jx$ is an integer, and hence
$
\eta_n(\{n^jx\})=\eta_n(0)=0$.
Therefore
\[
T_{m_0}^{(n)}(x)
=
\sum_{j=0}^{\infty}n^{-j}\eta_n(\{n^jx\}).
\]
This proves the desired representation on $N$.\\

Finally, $\eta_n$ is bounded, so
\[
\sum_{j=0}^{\infty}n^{-j}\eta_n(\{n^jx\})
\]
converges uniformly on $[0,1]$. Hence the series defines a continuous
extension of $P^{(n)}$ from $N$ to $[0,1]$.
\end{proof}

\begin{lemma}[Extension of the $n$-point inequality]
\label{lem:Pn-npoint-extension}
For all $x_1,\dots,x_n\in[0,1]$, the continuous extension of $P^{(n)}$ satisfies
\[
P^{(n)}(\bar x)
\le
\frac1n\sum_{i=1}^nP^{(n)}(x_i)
+
\frac1n\sum_{i=1}^n|x_i-\bar x|,
\qquad 
\bar x=\frac1n\sum_{i=1}^n x_i.
\]
\end{lemma}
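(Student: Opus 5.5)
The plan is a density-and-continuity argument, exactly parallel to the last step of the proof of Proposition \ref{prop:T3-admissible}. By Lemma \ref{lem:Pn-npoint-on-N}, the inequality
\[
P^{(n)}(\bar x)\le \frac1n\sum_{i=1}^nP^{(n)}(x_i)+\frac1n\sum_{i=1}^n|x_i-\bar x|
\]
already holds whenever all $x_i\in N$. Here $P^{(n)}$ on $N$ agrees with the continuous extension by Lemma \ref{lem:Pn-Takagi-representation}. So it suffices to pass to limits.

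Fix arbitrary $x_1,\dots,x_n\in[0,1]$. Since $N=\bigcup_m N_m$ is dense in $[0,1]$, choose for each $i$ a sequence $x_i^{(m)}\in N$ with $x_i^{(m)}\to x_i$. For instance, take $x_i^{(m)}=\lfloor n^m x_i\rfloor/n^m\in N_m$, so that all points lie in a common $N_m$. Set $\bar x^{(m)}=\frac1n\sum_i x_i^{(m)}$. Then $\bar x^{(m)}\to\bar x$, and $\bar x^{(m)}\in N_{m+1}\subset N$, although this membership is not even needed beyond Lemma \ref{lem:Pn-npoint-on-N}.

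Apply Lemma \ref{lem:Pn-npoint-on-N} to the tuple $(x_1^{(m)},\dots,x_n^{(m)})$ and let $m\to\infty$. The left side converges to $P^{(n)}(\bar x)$ because $P^{(n)}$ is continuous on $[0,1]$, being given by the uniformly convergent series in Lemma \ref{lem:Pn-Takagi-representation}. The right side converges for two reasons: each $P^{(n)}(x_i^{(m)})\to P^{(n)}(x_i)$ by continuity, and $|x_i^{(m)}-\bar x^{(m)}|\to|x_i-\bar x|$ by continuity of the absolute value. Non-strict inequalities are preserved under limits, which gives the claim.

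There is no real obstacle. The one point to state explicitly is that the extended function coincides with the original $P^{(n)}$ on $N$. This holds because the series in Lemma \ref{lem:Pn-Takagi-representation} equals $P^{(n)}$ there and is continuous, so the inequality proved on $N$ is indeed an inequality for the continuous function evaluated at the approximating points.
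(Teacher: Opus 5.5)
Your proof is correct and is the same argument as the paper's: approximate each $x_i$ by points of $N$, apply Lemma \ref{lem:Pn-npoint-on-N}, and pass to the limit using the continuity of the uniformly convergent series from Lemma \ref{lem:Pn-Takagi-representation}. Your explicit choice $x_i^{(m)}=\lfloor n^m x_i\rfloor/n^m$ is slightly more explicit than the paper's. So is your remark that the extension agrees with the stabilized limit on $N$.
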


\begin{proof}
Choose sequences $x_i^{(\ell)}\in N$ such that
$x_i^{(\ell)}\to x_i$ as 
$\ell\to\infty.$
Set
\[
\bar x^{(\ell)}
:=
\frac1n\sum_{i=1}^n x_i^{(\ell)}.
\]
Then $\bar x^{(\ell)}\in N$ and $\bar x^{(\ell)}\to\bar x.$
Applying Lemma \ref{lem:Pn-npoint-on-N} to
$x_1^{(\ell)},\dots,x_n^{(\ell)}$, we get
\[
P^{(n)}(\bar x^{(\ell)})
\le
\frac1n\sum_{i=1}^nP^{(n)}(x_i^{(\ell)})
+
\frac1n\sum_{i=1}^n|x_i^{(\ell)}-\bar x^{(\ell)}|.
\]
Letting $\ell\to\infty$, and using the continuity of $P^{(n)}$, gives
the desired inequality.
\end{proof}

\begin{lemma}[Symmetry of $P^{(n)}$]
\label{lem:Pn-symmetry}
For every $n\ge2$, the continuous extension of $P^{(n)}$ satisfies
\[
P^{(n)}(1-x)=P^{(n)}(x),
\qquad 0\le x\le1.
\]
\end{lemma}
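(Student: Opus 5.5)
The plan is to prove the symmetry at the level of the generator and then transfer it to the series via the Takagi-type representation of Lemma \ref{lem:Pn-Takagi-representation}. First I check that $\eta_n(1-t)=\eta_n(t)$ for $t\in[0,1]$. At the nodes $t=\rho/n$ this reads
\[
\eta_n\left(\frac{n-\rho}{n}\right)=\frac{(n-\rho)\rho}{n(n-1)}=\eta_n\left(\frac{\rho}{n}\right),
\]
and the reflection $t\mapsto 1-t$ maps the node set $\{\rho/n\}$ onto itself. Both $\eta_n$ and $\eta_n(1-\cdot)$ are continuous and piecewise linear with breakpoints only at these nodes, and they agree there. Hence they coincide.

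Next I compare the fractional parts of $n^j(1-x)$ and $n^jx$. Since $n^j$ is an integer, $n^j(1-x)=n^j-n^jx$. So whenever $n^jx\notin\mathbb Z$ we get $\{n^j(1-x)\}=1-\{n^jx\}$. When $n^jx\in\mathbb Z$, both fractional parts are $0$, and the identity $\eta_n(\{n^j(1-x)\})=\eta_n(\{n^jx\})$ still holds because $\eta_n(0)=\eta_n(1)=0$. In every case the $j$-th terms of the two series agree.

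Summing the series in \eqref{eq:seriesPn} term by term then gives $P^{(n)}(1-x)=P^{(n)}(x)$ for all $x\in[0,1]$. Here I use that Lemma \ref{lem:Pn-Takagi-representation} says the continuous extension is given by the uniformly convergent series on all of $[0,1]$. The endpoints are also covered by Lemma \ref{lem:Pn-endpoints}.

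An alternative route is to verify the identity on $N$ through the digit-sum formula, using the complement symmetry $G_n(n^m-k)$ versus $G_n(k)$ coming from the digit map $\varepsilon\mapsto n-1-\varepsilon$, and then extend by continuity. The series route avoids that bookkeeping. The only point that needs care is the integer case of the fractional part, and it is handled by $\eta_n(0)=\eta_n(1)=0$. I do not expect any real obstacle.
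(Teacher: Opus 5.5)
Your proposal is correct and follows the paper's proof essentially step for step. You use the same ingredients: symmetry of the nodal values plus piecewise linearity gives $\eta_n(1-t)=\eta_n(t)$; the relation $\{n^j(1-x)\}=1-\{n^jx\}$ off the integer case; and term-by-term comparison in the uniformly convergent series of Lemma~\ref{lem:Pn-Takagi-representation}.
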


\begin{proof}
By Lemma \ref{lem:Pn-Takagi-representation}, $P^{(n)}$ has the power-series representation ... \eqref{eq:seriesPn}, with the continuous piecewise linear generator  $\eta_n$ determined by
\[
\eta_n\left(\frac{\rho}{n}\right)
=
\frac{\rho(n-\rho)}{n(n-1)},
\qquad
\rho=0,1,\dots,n.
\]
The nodal values are symmetric, since
\[
\eta_n\left(1-\frac{\rho}{n}\right)
=
\eta_n\left(\frac{n-\rho}{n}\right)
=
\frac{(n-\rho)\rho}{n(n-1)}
=
\eta_n\left(\frac{\rho}{n}\right).
\]
Because $\eta_n$ is linear on each interval
$[\rho/n,(\rho+1)/n]$, it follows that
\[
\eta_n(1-t)=\eta_n(t),
\qquad 0\le t\le1.
\]
Now fix $x\in[0,1]$. For each $j\ge0$, if $n^jx\in\mathbb Z$, then
\[
\{n^j(1-x)\}=0=\{n^jx\},\quad \text{otherwise}\quad
\{n^j(1-x)\}=1-\{n^jx\}.
\]
Using the symmetry of $\eta_n$, and also $\eta_n(0)=\eta_n(1)=0$, we get
\[
\eta_n(\{n^j(1-x)\})
=
\eta_n(\{n^jx\})
\]
for every $j\ge0$. Therefore, term by term,
\[
P^{(n)}(1-x)
=
\sum_{j=0}^{\infty} n^{-j}\eta_n(\{n^j(1-x)\})
=
\sum_{j=0}^{\infty} n^{-j}\eta_n(\{n^jx\})
=
P^{(n)}(x).
\]
\end{proof}
\begin{lemma}[Endpoint growth]
\label{lem:Pn-growth}
For $0<x<1$,
\[
P^{(n)}(x)
\asymp_n
x^*\log\left(\frac1{x^*}\right),\qquad
x^*:=\min\{x,1-x\}.
\]
\end{lemma}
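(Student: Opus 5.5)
The plan is to reduce everything to the comparison between $\eta_n$ and the distance to the nearest integer, exactly as in the proof of Lemma~\ref{lem:T3-asymptotics}. The argument has three steps.

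\emph{Step 1: comparability of generators.} First we check that there are constants $0<c_n\le C_n$ with
\[
c_n\,\dist(t,\mathbb Z)\le \eta_n(t)\le C_n\,\dist(t,\mathbb Z),\qquad 0\le t\le1 .
\]
Since $\eta_n$ is piecewise linear with nodes $\rho/n$, the lower bound reduces to the nodal values. We have $\eta_n(\rho/n)>0$ for $1\le\rho\le n-1$. Near $0$ the slope on $[0,1/n]$ is $\eta_n(1/n)/(1/n)=1$, and by the symmetry $\eta_n(1-t)=\eta_n(t)$ the slope near $1$ is $-1$. Both ratios $\eta_n/\dist(\cdot,\mathbb Z)$ are therefore continuous and positive on $[0,1]$ after extension at the endpoints, which gives the two constants. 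Using the representation of Lemma~\ref{lem:Pn-Takagi-representation}, this shows $P^{(n)}\asymp_n W_n$, where
\[
W_n(x):=\sum_{j\ge0}n^{-j}\dist(n^jx,\mathbb Z).
\]

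\emph{Step 2: reduction to small $x$.} By Lemma~\ref{lem:Pn-symmetry} it suffices to treat $0<x\le1/2$. Fix $M\ge1$ with $n^{-(M+1)}<x\le n^{-M}$.

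\emph{Step 3: two-sided estimate for $W_n$.} Consider first $x\le 1/n$, so $M\ge1$.
\begin{itemize}
\item For the lower bound, take $j\le M-1$. Then $n^jx\le 1/n\le1/2$, so $\dist(n^jx,\mathbb Z)=n^jx$. Hence $W_n(x)\ge Mx$. Since $M\ge \log_n(1/x)-1$ and $M\ge1$, we get $M\gtrsim_n \log(1/x)$.
\item For the upper bound, use $\dist\le\min\{n^jx,1/2\}$. This gives
\[
W_n(x)\le (M+1)x+\tfrac12\sum_{j>M}n^{-j}\lesssim_n (M+1)x+n^{-M}.
\]
Since $n^{-M}<nx$ and $M+1\lesssim \log(1/x)$ when $x\le1/n\le1/2$, this is $\lesssim_n x\log(1/x)$.
\end{itemize}
On the remaining range $1/n\le x\le1/2$, the function $x\log(1/x)$ is bounded above and below by constants depending on $n$. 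Also $P^{(n)}(x)\ge\eta_n(x)\ge c_n\dist(x,\mathbb Z)\ge c_n/n$, and $P^{(n)}$ is bounded above by $\sup\eta_n\cdot n/(n-1)$. This gives comparability there as well. For $n=2$ this range is just the point $1/2$, and the same bound covers it.

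The only slightly delicate point is the lower constant $c_n$ in Step~1, specifically that $\eta_n$ has slope exactly $1$ at $0$; this follows from the nodal formula $\eta_n(1/n)=(n-1)/(n(n-1))=1/n$. Everything else is routine bookkeeping identical to the ternary case.
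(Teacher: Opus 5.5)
Your proof is correct and follows essentially the same route as the paper: compare $\eta_n$ with $\dist(\cdot,\mathbb Z)$ to reduce to $W_n$, use symmetry to restrict to $0<x\le 1/2$, and bound $W_n$ above and below by truncating the series at a logarithmic index. The only cosmetic difference is in how the range near $1/2$ is handled. The paper takes the separate cutoff $\lfloor\log_n(1/(2x))\rfloor$ for the lower bound and so covers all of $(0,1/2]$ at once, whereas you use a single cutoff and treat $[1/n,1/2]$ by a compactness argument, as the paper itself does in the ternary case.
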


\begin{proof}
Let
\[
\phi(t):=\dist(t,\mathbb Z),
\qquad 0\le t\le1.
\]
The generator $\eta_n$ is continuous, non-negative, positive on
$(0,1)$, and vanishes linearly at $0$ and $1$. Hence there exist
constants $0<c_n\le C_n<\infty$, depending only on $n$, such that
\begin{equation}\label{eq:psi-dist-comparison}
c_n\phi(t)
\le
\eta_n(t)
\le
C_n\phi(t),
\qquad
0\le t\le1.
\end{equation}
By Lemma \ref{lem:Pn-Takagi-representation},
\[
P^{(n)}(x)
\asymp_n
W_n(x),
\qquad
W_n(x):=
\sum_{j=0}^{\infty}
n^{-j}\dist(n^jx,\mathbb Z).
\]
Since $\eta_n(1-t)=\eta_n(t)$, we have
\[
P^{(n)}(1-x)=P^{(n)}(x).
\]
Thus it is enough to prove the estimate for $0<x\le1/2$.

For the upper bound, set
$
L:=\left\lfloor \log_n\left(\frac1x\right)\right\rfloor.
$
Using
$
\dist(n^jx,\mathbb Z)\le \min\{n^jx,1/2\},
$
we get
\[
n^{-j}\dist(n^jx,\mathbb Z)
\le
\min\{x,\tfrac12 n^{-j}\}.
\]
Therefore
\[
W_n(x)
\le
\sum_{j=0}^{L}x
+
\frac12\sum_{j=L+1}^{\infty}n^{-j}
\le
(L+1)x+C_n n^{-L}.
\]
Since $n^{-L}\lesssim_n x$, this gives
\[
W_n(x)
\lesssim_n
x\log\left(\frac1x\right).
\]

For the lower bound, set
$
M:=\left\lfloor \log_n\left(\frac1{2x}\right)\right\rfloor.
$
For $0\le j\le M$, one has $n^jx\le1/2$. Hence
$\dist(n^jx,\mathbb Z)=n^jx,$
and so
$
n^{-j}\dist(n^jx,\mathbb Z)=x.
$
Therefore
\[
W_n(x)
\ge
\sum_{j=0}^{M}x
=
(M+1)x
\gtrsim_n
x\log\left(\frac1x\right).
\]
Combining the upper and lower bounds gives, and using symmetry over $x\in (0,1/2]$, we get
\[
P^{(n)}(x)
\asymp_n
x^*\log\left(\frac1{x^*}\right),
\qquad
0<x<1.
\]
\end{proof}

\begin{proof}[Proof of Theorem \ref{Thm:digit-sum-Pn}]
Item $(1)$ is Lemma \ref{lem:Bm-compatibility}. Item $(2)$ follows from
Lemma \ref{lem:Pn-endpoints} and the non-negativity of the Takagi-type
series in Lemma \ref{lem:Pn-Takagi-representation}. Item $(3)$ is Lemma
\ref{lem:Pn-npoint-on-N}.
Item $(5)$ is Lemma \ref{lem:Pn-Takagi-representation}, item $(6)$ is
Lemma \ref{lem:Pn-symmetry}, and item $(7)$ is Lemma \ref{lem:Pn-growth}.

It remains only to justify item $(4)$. By Lemma
\ref{lem:Pn-Takagi-representation}, $P^{(n)}$ extends continuously to
$[0,1]$. By Lemma \ref{lem:Pn-endpoints}, this extension vanishes at
$0$ and $1$, and by Lemma \ref{lem:Pn-npoint-extension} it satisfies
the $n$-point Bellman inequality on all of $[0,1]$. Hence
$P^{(n)}\in \mathcal B(1,1,n)$. Applying Theorem \ref{mthm:1} with
$\alpha=\beta=1$ gives
$
P^{(n)}(|A|)\le \|S_1(\mathbbm 1_A)\|_1
$
for every measurable $A\subset[0,1)$. 
By Lemma \ref{lem:Pn-symmetry},
we get $
P^{(n)}(|A|)=P^{(n)}(|A|^*)$.
Therefore
\[
P^{(n)}(|A|^*)\le \|S_1(\mathbbm 1_A)\|_1,
\]
which proves item $(4)$ and completes the proof.
\end{proof}

\begin{remark}
There is another natural candidate obtained by replacing the digit-sum
statistic $\sigma_n$ with the number $s_n$ of nonzero base-$n$ digits.
This candidate is closer to the geometric interpretation of Hart's binary
formula. Numerical tests suggest that it may satisfy the same $n$-point
Bellman inequality, but we do not use this candidate in the present work as it would give a worse lower bound.
\end{remark}

\section{Proof of Theorem \ref{mthm4} and Corollary \ref{cor:P-dominates-omega}}

We first prove the comparison between $P^{(n)}$ and $\omega_n$.
By Theorem~\ref{Thm:digit-sum-Pn}, we have
\[
P^{(n)}(x)
=
\sum_{j=0}^{\infty}n^{-j}\eta_n(\{n^jx\}),
\]
where $\eta_n:[0,1]\to[0,\infty)$ is the continuous piecewise linear
function determined by
\[
\eta_n\left(\frac{\rho}{n}\right)
=
\frac{\rho(n-\rho)}{n(n-1)},
\qquad
\rho=0,1,\dots,n.
\]
On the other hand, by \eqref{eq:wn} (see \cite{LevEdgeIso}), we have 
\[
\omega_n(x)
=
\sum_{j=0}^{\infty}n^{-j}\varphi_n(\{n^jx\}),
\qquad
\varphi_n(t):=\|t\|_{1/n}
=
\min\{\operatorname{dist}(t,\mathbb Z),1/n\}.
\]
Thus it is enough to prove
\[
\eta_n(t)\ge \varphi_n(t),
\qquad 0\le t\le1.
\]

On $[0,1/n]$, both functions are equal to $t$. On
$[1-1/n,1]$, both functions are equal to $1-t$. Now consider a middle
interval
\[
\frac{r}{n}\le t\le \frac{r+1}{n},
\qquad
1\le r\le n-2.
\]
On this interval,
\[
\varphi_n(t)=\frac1n.
\]
Meanwhile $\eta_n$ is linear between
\[
\eta_n\left(\frac{r}{n}\right)
=
\frac{r(n-r)}{n(n-1)}
\quad 
\text{and}
\quad 
\eta_n\left(\frac{r+1}{n}\right)
=
\frac{(r+1)(n-r-1)}{n(n-1)}.
\]
For every $1\le \rho\le n-1$, one has
\[
\rho(n-\rho)\ge n-1.
\]
Therefore both endpoint values are at least $1/n$. Since $\eta_n$ is
linear on the interval, it follows that
\[
\eta_n(t)\ge \tfrac1n=\varphi_n(t).
\]
Thus
$
\eta_n(t)\ge \varphi_n(t),
\qquad 0\le t\le1$. Summing term by term gives
\[
P^{(n)}(x)\ge \omega_n(x),
\qquad 0\le x\le1.
\]

For $n=2$, one has $\eta_2=\varphi_2$. Hence
$
P^{(2)}=\omega_2$.
For $n=3$, the functions $\eta_3$ and $\varphi_3$ agree at the nodes
\[
0,\frac13,\frac23,1,
\]
and both are linear on the corresponding subintervals. Hence
$\eta_3=\varphi_3,$
and therefore
$
P^{(3)}=\omega_3$.

For $n\ge4$, the comparison is strict at some points. Indeed, at
$t=2/n$, one has
\[
\eta_n\left(\frac2n\right)
=
\frac{2(n-2)}{n(n-1)}
>
\frac1n
=
\varphi_n\left(\frac2n\right).
\]
Taking $x=2/n$, all terms with $j\ge1$ vanish, since
$
n^j\tfrac2n=2n^{j-1}\in\mathbb Z.
$
Therefore
\[
P^{(n)}\left(\frac2n\right)
>
\omega_n\left(\frac2n\right).
\]

We next show that equality holds along the sequence $x=n^{-k}$.
Fix $k\ge1$ and set
$
p_k:=n^{-k}.
$
For $0\le j\le k-1$, we have
\[
\{n^jp_k\}
=
n^{j-k}
\in
\left(0,\tfrac1n\right].
\]
Since
$
\eta_n(t)=\varphi_n(t)=t,
$ for $
0\le t\le\tfrac1n,
$
it follows that
\[
n^{-j}\eta_n(\{n^jp_k\})
=
n^{-j}\varphi_n(\{n^jp_k\})
=
n^{-j}n^{j-k}
=
n^{-k}
=
p_k.
\]
For $j\ge k$, the number
$
n^jp_k=n^{j-k}
$
is an integer, and hence
$
\{n^jp_k\}=0.
$
Thus all terms with $j\ge k$ vanish. Consequently,
\begin{align*}
P^{(n)}(p_k)
&=
\sum_{j=0}^{k-1}
n^{-j}\eta_n(n^{j-k})
=
\sum_{j=0}^{k-1}p_k
=
kp_k,
\end{align*}
and similarly,
\begin{align*}
\omega_n(p_k)
=
\sum_{j=0}^{k-1}
n^{-j}\varphi_n(n^{j-k})
=
\sum_{j=0}^{k-1}p_k
=
kp_k.
\end{align*}
Therefore
\[
P^{(n)}(n^{-k})
=
\omega_n(n^{-k})
=
kn^{-k}.
\]
By the symmetry of both functions, equality also holds at
$
x=1-n^{-k}.
$
This proves Corollary~\ref{cor:P-dominates-omega}.\\

We now prove Theorem~\ref{mthm4}. Let
$A\subset[0,1)$ be measurable, and set 
$x:=|A|$.
By Theorem~\ref{Thm:digit-sum-Pn},
\[
\|S_1(\mathbbm 1_A)\|_1
\ge
P^{(n)}(|A|).
\]
By Corollary~\ref{cor:P-dominates-omega},
\[
P^{(n)}(|A|)
\ge
\omega_n(|A|).
\]
Therefore
\[
\|S_1(\mathbbm 1_A)\|_1
\ge
\omega_n(|A|).
\]
By Lemma~\ref{thm:wn-properties}(c),
\[
\omega_n(|A|)
=
\omega_n(1-|A|)
=
\omega_n(x^*),
\qquad
x^*:=\min\{|A|,1-|A|\}.
\]
Hence
\[
\|S_1(\mathbbm 1_A)\|_1
\ge
\omega_n(x^*).
\]
For $0<x<1$, Lemma~\ref{thm:wn-properties}(d) gives
\[
\omega_n(x^*)
\asymp_n
x^*\log\left(\frac1{x^*}\right).
\]

It remains to prove that the lower bound is sharp up to a constant
along the sequence $x=n^{-k}$. For $k\ge1$, let
$
A_k:=[0,n^{-k}),
$ and  $
p_k:=|A_k|=n^{-k}.$
Take 
$
f:=\mathbbm 1_{A_k}.
$,
for $0\le m\le k$, define
\[
I_m:=[0,n^{-m}).
\]
Since $A_k\subset I_m$, the average of $f$ over $I_m$ is
\[
\tfrac{|A_k|}{|I_m|}
=
\tfrac{n^{-k}}{n^{-m}}
=
n^{m-k}.
\]
Therefore
\[
f_m
=
n^{m-k}\mathbbm 1_{I_m},
\qquad
0\le m\le k.
\]
At level $k$, we have
$
f_k=\mathbbm 1_{A_k}=f,$
and hence
$
d_m=0,
$ for $
m>k$.

Fix $1\le m\le k$. Since
\[
d_m=f_m-f_{m-1},
\]
we have
\[
d_m(x)
=
\begin{cases}
n^{m-k}-n^{m-1-k},
&
x\in I_m,
\\[1ex]
-n^{m-1-k},
&
x\in I_{m-1}\setminus I_m,
\\[1ex]
0,
&
x\notin I_{m-1}.
\end{cases}
\]
Since
\[
n^{m-k}-n^{m-1-k}
=
(n-1)n^{m-1-k},
\]
this becomes
\[
d_m(x)
=
\begin{cases}
(n-1)n^{m-1-k},
&
x\in I_m,
\\[1ex]
-n^{m-1-k},
&
x\in I_{m-1}\setminus I_m,
\\[1ex]
0,
&
x\notin I_{m-1}.
\end{cases}
\]
Therefore
\begin{align*}
\|d_m\|_1
&=
|I_m|(n-1)n^{m-1-k}
+
|I_{m-1}\setminus I_m|n^{m-1-k}
\\
&=
n^{-m}(n-1)n^{m-1-k}
+
\left(n^{-(m-1)}-n^{-m}\right)n^{m-1-k}.
\end{align*}
The first term equals
\[
n^{-m}(n-1)n^{m-1-k}
=
(n-1)n^{-k-1}.
\]
Moreover,
$
n^{-(m-1)}-n^{-m}
=
(n-1)n^{-m},
$
so the second term also equals
\[
(n-1)n^{-m}n^{m-1-k}
=
(n-1)n^{-k-1}.
\]
Hence
\[
\|d_m\|_1
=
2(n-1)n^{-k-1}
=
\frac{2(n-1)}n\,p_k.
\]

Since $d_m=0$ for $m>k$, we have
$
S_1(f)
=
\sum_{m=1}^k|d_m|.
$
Therefore
\begin{align*}
\|S_1(\mathbbm 1_{A_k})\|_1
=
\int_0^1
\sum_{m=1}^k|d_m(x)|\,dx
=
\sum_{m=1}^k
\int_0^1|d_m(x)|\,dx
=
\sum_{m=1}^k\|d_m\|_1
=
\sum_{m=1}^k
\frac{2(n-1)}n\,p_k
=
\frac{2(n-1)}n\,kp_k.
\end{align*}
Since
$
\omega_n(p_k)=kp_k,
$
we conclude that
\[
\|S_1(\mathbbm 1_{A_k})\|_1
=
\frac{2(n-1)}n\,
\omega_n(n^{-k}).
\]
Thus the constant in Theorem~\ref{mthm4} may be taken to be
$
c_n=\frac{2(n-1)}n.
$
Equivalently, since the lower bound applies to every measurable set of
measure $p_k$, we have
\[
\omega_n(p_k)
\le
V_n(p_k)
\le
\|S_1(\mathbbm 1_{A_k})\|_1
=
\frac{2(n-1)}n\,\omega_n(p_k).
\]
Finally, for 
$p_k=n^{-k}\longrightarrow0$
we have
\[
\omega_n(p_k)
=
kp_k
=
\frac1{\log n}\,
p_k\log\left(\frac1{p_k}\right).
\]
Therefore the logarithmic order of the lower bound is optimal along the
sequence $p_k=n^{-k}$.

For $n=2$, one has
$
\frac{2(n-1)}n=1,
$
so the lower bound is attained exactly by the intervals $A_k$.
This completes the proof of Theorem~\ref{mthm4}.

\section{Proof of Theorem \ref{thm:alpha}}

In this section we prove the sub-$L^1$ lower bound and its sharpness.
We first prove the lower bound for every measurable $A\subset[0,1)$, and then prove sharpness using $n$-adic intervals.
\[
\|S_1(\mathbbm 1_A)\|_\alpha\ge |A|^*,
\qquad
|A|^*:=\min\{|A|,1-|A|\}.
\]
Moreover, this estimate is sharp up to a constant depending only on
$\alpha$ and $n$.

\begin{lemma}
\label{lem:xstar-admissible-alpha}
Define
\[
b(x):=x^*=\min\{x,1-x\},
\qquad 0\le x\le1.
\]
It holds $b\in\mathcal B(\alpha,1,n)$. That is $b(0)=b(1)=0$, and for every $x_1,\dots,x_n\in[0,1]$, one has 
\begin{equation}\label{eq:xstar-alpha-npoint}
b(\bar x)^\alpha
\le
\frac1n\sum_{i=1}^n
\bigl(b(x_i)+|x_i-\bar x|\bigr)^\alpha,\qquad 
\bar x=\frac1n\sum_{i=1}^n x_i.
\end{equation}
\end{lemma}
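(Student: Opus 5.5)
The plan is to reduce \eqref{eq:xstar-alpha-npoint} to a pointwise inequality, using the fact that $b$ is $1$-Lipschitz. The endpoint conditions $b(0)=b(1)=0$ and continuity and non-negativity are immediate. For each $i$, the triangle inequality for the $1$-Lipschitz function $b$ gives
\[
b(\bar x)\le b(x_i)+|x_i-\bar x|.
\]
Since $t\mapsto t^\alpha$ is nondecreasing on $[0,\infty)$, raising to the power $\alpha$ yields
\[
b(\bar x)^\alpha\le \bigl(b(x_i)+|x_i-\bar x|\bigr)^\alpha
\]
for every $i=1,\dots,n$. Averaging over $i$ then gives \eqref{eq:xstar-alpha-npoint} directly. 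The constraint $\bar x=\frac1n\sum x_i$ is not even needed, and neither is $\alpha<1$.

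It remains only to justify the Lipschitz property. We have $b(x)=\min\{x,1-x\}$, which is a minimum of two $1$-Lipschitz affine functions, hence $1$-Lipschitz. Concretely, $|b(x)-b(y)|\le |x-y|$ on $[0,1]$.

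With $\beta=1$, Definition \ref{npi} asks exactly for $b(\bar x)^\alpha\le\frac1n\sum_i(b(x_i)+|x_i-\bar x|)^\alpha$. So the lemma follows, and Theorem \ref{mthm:1} will give $\|S_1(\mathbbm 1_A)\|_\alpha\ge |A|^*$.

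I expect no real obstacle here. The only point worth flagging is that this bound is very lossy: it uses a single child rather than the averaging. That is consistent with the fact that, for $\alpha<1$, no logarithmic factor is expected. The sharpness part of Theorem \ref{thm:alpha} will be handled separately, via the intervals $[0,n^{-k})$.
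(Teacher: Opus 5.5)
Your proof is correct and is essentially the paper's argument. The paper writes $b(x)=\dist(x,\{0,1\})$ and uses the triangle inequality for distance to a set; this is the same $1$-Lipschitz bound $b(\bar x)\le b(x_i)+|x_i-\bar x|$ that you get from $b$ being a minimum of two $1$-Lipschitz affine maps, followed by the same monotonicity of $t\mapsto t^\alpha$ and averaging over $i$.
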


\begin{proof}
The endpoint conditions are immediate.
We use the elementary interpretation
\[
b(x)=\operatorname{dist}(x,\{0,1\}).
\]
By the triangle inequality for distance to a set, for each $i=1,\dots,n$,
\[
b(\bar x)
=
\operatorname{dist}(\bar x,\{0,1\})
\le
\operatorname{dist}(x_i,\{0,1\})+|x_i-\bar x|
=
b(x_i)+|x_i-\bar x|.
\]
Since $\alpha <1$, the map $t\mapsto t^\alpha$ is increasing on $[0,\infty)$, we get
\[
b(\bar x)^\alpha
\le
\bigl(b(x_i)+|x_i-\bar x|\bigr)^\alpha
\]
for every $i$. Averaging over $i$ gives
\[
b(\bar x)^\alpha
\le
\frac1n\sum_{i=1}^n
\bigl(b(x_i)+|x_i-\bar x|\bigr)^\alpha.
\]
This proves \eqref{eq:xstar-alpha-npoint}. Thus $b\in\mathcal B(\alpha,1,n)$.
\end{proof}

\begin{corollary}[The lower bound]
\label{cor:alpha-lower-bound}
For every measurable $A\subset[0,1)$,
\[
\|S_1(\mathbbm 1_A)\|_\alpha
\ge
|A|^*.
\]
\end{corollary}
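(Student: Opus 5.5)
This follows by combining Lemma \ref{lem:xstar-admissible-alpha} with the Bellman principle, Theorem \ref{mthm:1}, applied with $\beta=1$ and the given $0<\alpha<1$. The parameters satisfy $0<\alpha\le 1\le\beta$, so Theorem \ref{mthm:1} applies to every $F\in\mathcal B(\alpha,1,n)$.

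First we check that $b(x)=x^*$ meets all the requirements of Definition \ref{npi}. It is continuous and non-negative on $[0,1]$, it vanishes at $0$ and $1$, and it satisfies the $n$-point inequality \eqref{eq:npt} with $\beta=1$. For $\beta=1$, that inequality reads $b(\bar x)^\alpha\le \frac1n\sum_i (b(x_i)+|x_i-\bar x|)^\alpha$, which is exactly \eqref{eq:xstar-alpha-npoint}. Hence $b\in\mathcal B(\alpha,1,n)$, which is the content of Lemma \ref{lem:xstar-admissible-alpha}.

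Next, Theorem \ref{mthm:1} with $F=b$ gives, for every measurable $A\subset[0,1)$,
\[
\|S_1(\mathbbm 1_A)\|_\alpha\ge b(|A|)=\min\{|A|,1-|A|\}=|A|^*.
\]
The conventions need no extra care. If $\|S_1(\mathbbm 1_A)\|_\alpha=+\infty$, the inequality is trivial, and Theorem \ref{mthm:1} already handles the infinite case.

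No step here is a real obstacle: the substantive work, namely the passage from the local $n$-point inequality to the global estimate (Lemmas \ref{lem:common-tail} and \ref{lem:Bellman_q_extension} together with the martingale limit argument), is already contained in Theorem \ref{mthm:1}.
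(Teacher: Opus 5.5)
Your proposal is correct and is essentially the paper's proof: you note that $b(x)=x^*$ lies in $\mathcal B(\alpha,1,n)$ by Lemma~\ref{lem:xstar-admissible-alpha}, then apply Theorem~\ref{mthm:1} with $\beta=1$ and $F=b$ to get $\|S_1(\mathbbm 1_A)\|_\alpha\ge b(|A|)=|A|^*$. Your extra checks (continuity, endpoint values, the infinite-norm convention) are accurate and consistent with the paper's argument.
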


\begin{proof}
By Lemma \ref{lem:xstar-admissible-alpha}, the function $b(x)=x^*$
belongs to the admissible Bellman class $\mathcal B(\alpha,1,n)$.
Hence, by Theorem \ref{mthm:1} with $\beta=1$, we obtain
$
\|S_1(\mathbbm 1_A)\|_\alpha
\ge
|A|^*.
$
This proves the lower bound.
\end{proof}

It remains to prove sharpness. We use the same extremizing sequence as in the
dyadic case, namely the leftmost $n$-adic intervals.

\begin{lemma}[Sharpness along $n$-adic intervals]
\label{lem:alpha-sharpness-nadic}
Let
\[
A_k:=\left[0,\tfrac1{n^k}\right),
\qquad
p_k:=|A_k|=n^{-k}.
\]
Then there exists a constant $C_{\alpha,n}<\infty$ such that
\[
\|S_1(\mathbbm 1_{A_k})\|_\alpha
\le
C_{\alpha,n}\,p_k
\qquad
\text{for every }k\ge1.
\]
Consequently, the lower bound
$
\|S_1(\mathbbm 1_A)\|_\alpha\ge |A|^*
$
is sharp up to a constant factor as $|A|\to0$.
\end{lemma}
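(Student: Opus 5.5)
We compute $S_1(\mathbbm 1_{A_k})$ pointwise, using the explicit martingale differences already found in the proof of Theorem~\ref{mthm4}. There, with $I_m=[0,n^{-m})$, we showed that for $1\le m\le k$ the difference $d_m$ vanishes off $I_{m-1}$, and
\[
|d_m|=(n-1)n^{m-1-k}\ \text{on } I_m,\qquad |d_m|=n^{m-1-k}\ \text{on } I_{m-1}\setminus I_m .
\]
Moreover $d_m=0$ for $m>k$. The shells $I_{j}\setminus I_{j+1}$, $0\le j\le k-1$, together with $I_k=A_k$, partition $[0,1)$.

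First consider a point $y\in I_j\setminus I_{j+1}$. It lies in $I_m$ for $m\le j$, in $I_{m-1}\setminus I_m$ for $m=j+1$, and outside $I_{m-1}$ for $m\ge j+2$. Hence
\[
S_1(\mathbbm 1_{A_k})(y)=\sum_{m=1}^{j}(n-1)n^{m-1-k}+n^{j-k}=(n^{j}-1)n^{-k}+n^{j-k}\le 2n^{j-k}.
\]
On $A_k=I_k$ the same sum gives $S_1\le n^{0}=1$; indeed $\sum_{m=1}^k(n-1)n^{m-1-k}=1-n^{-k}$. So $S_1\le 2n^{j-k}$ on a set of measure $|I_j\setminus I_{j+1}|\le n^{-j}$, and $S_1\le1$ on a set of measure $n^{-k}$.

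Next integrate the $\alpha$-th power:
\[
\int_0^1 S_1^\alpha\,dy\le n^{-k}+2^\alpha\sum_{j=0}^{k-1}n^{-j}n^{\alpha(j-k)}
= n^{-k}+2^\alpha n^{-\alpha k}\sum_{j=0}^{k-1}n^{-(1-\alpha)j}.
\]
This is the one step where $\alpha<1$ matters. Since $1-\alpha>0$, the geometric series is bounded by $(1-n^{-(1-\alpha)})^{-1}$ uniformly in $k$. Also $n^{-k}\le n^{-\alpha k}$. Therefore $\int S_1^\alpha\le C'_{\alpha,n}\,n^{-\alpha k}$, and taking the $1/\alpha$ power gives
\[
\|S_1(\mathbbm 1_{A_k})\|_\alpha\le (C'_{\alpha,n})^{1/\alpha}p_k .
\]
So we may take $C_{\alpha,n}=(C'_{\alpha,n})^{1/\alpha}$.

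The sharpness statement then follows. Since $|A_k|^*=p_k$ for $k\ge1$, Corollary~\ref{cor:alpha-lower-bound} gives the matching lower bound $\|S_1(\mathbbm 1_{A_k})\|_\alpha\ge p_k$. Thus the bound is attained up to the factor $C_{\alpha,n}$ as $p_k\to0$.

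There is no real obstacle beyond bookkeeping. The only care needed is the off-by-one in which shell receives the $n^{m-1-k}$ term, and the observation that the dominant contribution comes from the outer shells, where $S_1$ is small. This is exactly why the logarithm of the $L^1$ case disappears here: at $\alpha=1$ the series would become $\sum_j 1=k$.
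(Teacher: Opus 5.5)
Your proof is correct and follows essentially the same route as the paper. You use the same shell decomposition $I_j\setminus I_{j+1}$ (the paper's $R_{j+1}$), the same pointwise values $S_1=2n^{j-k}-n^{-k}$ on shells and $1-n^{-k}$ on $A_k$, and the same geometric series in $n^{-(1-\alpha)j}$, which converges because $\alpha<1$; the only differences (reusing the martingale differences from the proof of Theorem~\ref{mthm4}, and bounding the shell measure by $n^{-j}$ rather than $(n-1)n^{-j-1}$) merely change the constant.
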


\begin{proof}
Fix $k\ge1$, and set
$
p:=p_k=n^{-k},
$ and  $
A_k=\left[0,n^{-k}\right)$.
Let $f=\mathbbm 1_{A_k}$, and let $f_m=\mathbb E(f\mid\mathcal N_m)$
be the martingale associated with the regular $n$-adic filtration.
For $0\le m\le k$, the only level-$m$ interval that intersects $A_k$
is the leftmost interval
\[
I_m=\left[0,n^{-m}\right).
\]
On $I_m$, one has
$f_m=n^{m-k},$
and outside $I_m$, one has $f_m=0$. In particular, on $A_k$,
\[
f_0=p,\qquad f_1=n^{1-k},\qquad \dots,\qquad f_k=1.
\]
Thus for $x\in A_k$,
\[
S_1(f)(x)
=
\sum_{m=1}^k
\left(n^{m-k}-n^{m-1-k}\right)
=
1-p.
\]
Therefore the contribution of $A_k$ to the integral of $S_1$ can be seen as
\[
\int_{A_k}S_1(f)^\alpha\,dx
=
p(1-p)^\alpha
\le p.
\]

Now decompose the complement of $A_k$ according to the first generation at
which the point leaves the leftmost branch. For $1\le \ell\le k$, define
\[
R_\ell
:=
\left[n^{-\ell},\,n^{-(\ell-1)}\right),\qquad
|R_\ell|
=
(n-1)n^{-\ell}.
\]
If $x\in R_\ell$, then the martingale follows the leftmost branch up to
level $\ell-1$, and becomes zero from level $\ell$ onward. Hence
\[
S_1(f)(x)
=
\sum_{m=1}^{\ell-1}
\left(n^{m-k}-n^{m-1-k}\right)
+
n^{\ell-1-k}.
\]
The first sum telescopes:
\[
\sum_{m=1}^{\ell-1}
\left(n^{m-k}-n^{m-1-k}\right)
=
n^{\ell-1-k}-p.
\]
Therefore, for $x\in R_\ell$,
\[
S_1(f)(x)
=
2n^{\ell-1-k}-p
=
p\left(2n^{\ell-1}-1\right).
\]
Consequently,
\begin{align*}
\int_{[0,1]\setminus A_k}S_1(f)^\alpha\,dx
&=
\sum_{\ell=1}^{k}
|R_\ell|
\left(p(2n^{\ell-1}-1)\right)^\alpha
\\
&=
p^\alpha
\sum_{\ell=1}^{k}
(n-1)n^{-\ell}
(2n^{\ell-1}-1)^\alpha
\\
&\le
p^\alpha
\sum_{\ell=1}^{k}
(n-1)n^{-\ell}
(2n^{\ell-1})^\alpha
=
p^\alpha
(n-1)2^\alpha n^{-\alpha}
\sum_{\ell=1}^{k}
n^{\ell(\alpha-1)}.
\end{align*}
Since $\alpha<1$,
\[
\sum_{\ell=1}^{k}n^{\ell(\alpha-1)}
\le
\sum_{\ell=1}^{\infty}n^{\ell(\alpha-1)}
=
\frac{n^{\alpha-1}}{1-n^{\alpha-1}}.
\]
Hence
\[
\int_{[0,1]\setminus A_k}S_1(f)^\alpha\,dx
\le
C'_{\alpha,n}p^\alpha,
\]
where
\[
C'_{\alpha,n}
:=
(n-1)2^\alpha n^{-\alpha}
\frac{n^{\alpha-1}}{1-n^{\alpha-1}}
=
\frac{(n-1)2^\alpha}{n(1-n^{\alpha-1})}.
\]

Combining the contribution from $A_k$ and from its complement, and using
$p\le p^\alpha$ for $0<p\le1$, we get
\[
\int_0^1S_1(f)^\alpha\,dx
\le
p+C'_{\alpha,n}p^\alpha
\le
(1+C'_{\alpha,n})p^\alpha.
\]
Therefore
\[
\|S_1(\mathbbm 1_{A_k})\|_\alpha
=
\left(\int_0^1S_1(f)^\alpha\,dx\right)^{1/\alpha}
\le
(1+C'_{\alpha,n})^{1/\alpha}p.
\]
Thus the desired estimate holds with
$
C_{\alpha,n}:=(1+C'_{\alpha,n})^{1/\alpha}.
$
Since $p_k=n^{-k}\to0$, the lower bound is sharp up to a constant factor.
\end{proof}

\begin{proof}[Proof of Theorem \ref{thm:alpha}]
The lower bound follows directly from  Corollary \ref{cor:alpha-lower-bound}. The sharpness statement follows
from Lemma \ref{lem:alpha-sharpness-nadic}, applied to the sequence
$
A_k=\left[0,n^{-k}\right).
$
This completes the proof.
\end{proof}

\appendix

\section{Exact verification of the local debt inequality}
\label{app:local-debt-verification}

This appendix proves Lemma \ref{lem:local-debt}. More explicitly, $\Lambda$, $\mad$, and $\mathcal L_a$ are obtained
from rational affine functions using finitely many absolute values and
piecewise-linear branches, and $\Gamma$ is obtained from the finitely many
functions $-\mathcal L_c$ by taking their maximum together with $0$.
Thus each inequality below is a finite collection of rational affine
inequalities on rational polyhedral cells.

No floating point arithmetic and no numerical sampling are used. The code
below uses exact symbolic quantifier elimination over the ordered field of the real
numbers with rational coefficients. Thus
the output is not evidence from testing many points; it is an exact
verification that no real counterexample exists.

Recall the definitions from Section \ref{sec:proof-mthm2}. For
$a,b\in\{0,1,2\}^3$ and $r\in[0,1]^3$, define $\mathcal L_a(r)$ by
\eqref{eq:local-remainder}, and define
\[
\Gamma(r)
=
\max\Bigl\{0,\,-\mathcal L_c(r):c\in\{0,1,2\}^3\Bigr\}.
\]
For fixed $a$, put
\[
Y_a(r):=\Gamma(r)+\mathcal L_a(r).
\]
Since $\Gamma(r)\ge-\mathcal L_a(r)$, one has $Y_a(r)\ge0$. Hence
\[
Y_a(r)\ge 3\Gamma\left(\frac{a+r}{3}\right)
\]
is equivalent to
\[
Y_a(r)
+
3\mathcal L_b\left(\frac{a+r}{3}\right)
\ge0
\]
for every $b\in\{0,1,2\}^3$. Therefore it is enough to verify the
$27\cdot27=729$ inequalities
\begin{equation}\label{eq:appendix-729-ineq}
\Gamma(r)+\mathcal L_a(r)
+
3\mathcal L_b\left(\frac{a+r}{3}\right)
\ge0,
\qquad
a,b\in\{0,1,2\}^3,
\qquad
r\in[0,1]^3.
\end{equation}
The endpoint statement in Lemma \ref{lem:local-debt} is the identity
\[
\Gamma(r)=0,
\qquad r\in\{0,1\}^3.
\]

The function \texttt{red} implements the reduction
\[
s=K-3\min\{\lfloor K/3\rfloor,2\},
\qquad 0\le K\le9.
\]
We use half-open intervals at $3$ and $6$ only to make the endpoint
convention explicit. Since $\Lambda(0)=\Lambda(3)=0$, the value of
$\Lambda(s)$ would be unchanged if $s=3$ were used instead of $s=0$
at these two endpoints, but the convention below matches the definition
used in the proof.

The following standalone Wolfram Language script verifies both statements exactly.
\begin{verbatim}
ClearAll["Global`*"];

digits = Tuples[{0, 1, 2}, 3];

lam[u_] := Piecewise[{
{4 u/3, 0 <= u <= 1},
{2 - 2 u/3, 1 <= u <= 3/2},
{2 u/3, 3/2 <= u <= 2},
{4 (3 - u)/3, 2 <= u <= 3}
}];

red[u_] := Piecewise[{
{u, 0 <= u < 3},
{u - 3, 3 <= u < 6},
{u - 6, 6 <= u <= 9}
}];

mad[v_] := Total[Abs[# - Total[v]/3] & /@ v];

loc[a_, r_] := Module[{R, K, y},
R = Total[r];
K = Total[a] + R;
y = a + r;
lam[R]
+ Total[lam /@ y]
+ mad[y]
- mad[r]
- lam[red[K]]
- 3 lam[K/3]
];

gamma[r_] := Max @@ Join[{0}, (-loc[#, r] & /@ digits)];

bad[a_, b_] := Resolve[
Exists[{r1, r2, r3},
0 <= r1 <= 1 &&
0 <= r2 <= 1 &&
0 <= r3 <= 1 &&
gamma[{r1, r2, r3}]
+ loc[a, {r1, r2, r3}]
+ 3 loc[b, (a + {r1, r2, r3})/3] < 0
],
Reals
];

localDebtTest =
Tally[
Flatten[
 Table[
    bad[a, b],
    {a, digits},
    {b, digits}
 ]
]
];

endpointTest =
Table[
gamma[v],
{v, Tuples[{0, 1}, 3]}
];

{localDebtTest, endpointTest}
\end{verbatim}

The output is

\begin{verbatim}
{{{False, 729}}, {0, 0, 0, 0, 0, 0, 0, 0}}
\end{verbatim}

All constants in the code are rational; no machine-real numbers occur.\\

The first component says that, for each of the $729$ pairs $(a,b)$, the
existential formula searching for a counterexample to
\eqref{eq:appendix-729-ineq} is false. In other words, for every pair
$a,b\in\{0,1,2\}^3$, there is no real point
$r=(r_1,r_2,r_3)\in[0,1]^3$ at which the left-hand side of
\eqref{eq:appendix-729-ineq} is negative. Hence all $729$ inequalities
hold on the entire cube $[0,1]^3$.

The second component says that $\Gamma(r)=0$ at all eight vertices
$r\in\{0,1\}^3$. Therefore Lemma \ref{lem:local-debt} follows.\\

Let us stress that the command \texttt{bad[a,b]} is not checking a finite
list of sample points. It asks whether there exists a real point
$r=(r_1,r_2,r_3)\in[0,1]^3$ at which the left-hand side of
\eqref{eq:appendix-729-ineq} is negative. Since all functions appearing in
this formula are built from rational affine functions, absolute values,
finite maxima, and finite piecewise affine branches, this is a first-order
statement over the ordered field of real numbers with rational
coefficients. The command \texttt{Resolve[..., Reals]} decides this
statement exactly.

\bibliographystyle{plain}
\bibliography{references}

\end{document}